\pgfplotsset{compat=1.18}
 \newtheorem{theorem}{Theorem}[section]
 \newtheorem{lemma}[theorem]{Lemma}
 \newtheorem{definition}[theorem]{Definition}
\newtheorem{algo}[theorem]{Algorithm}
\newtheorem{remark}[theorem]{Remark}
\newcommand{\tn}[1][n]{t^{(#1)}}
\newcommand{\zn}[1][n]{Z^{(#1)}}
\newcommand{\zno}[1][n+1]{Z^{(#1)}}
\newcommand{\ip}[1][u,v]{\left\langle #1 \right\rangle_{L^{2}(\Theta)} }
\newcommand{\ips}{\left\langle}
\newcommand{\ipe}[1][L^{2}(\Theta)]{\right\rangle_{#1}}
\newcommand{\nr}[1][u]{\left\|#1 \right\|_{L^{2}(\Theta_)} }
\newcommand{\nrs}[1][u]{\left\|#1 \right\|^{2}_{L^{2}(\Theta)} }
\newcommand{\nrsw}{\left\|\lw \right\|^{2}_{\mc{K}} }
\newcommand{\nrsf}{\left\|f(\zeta(\pdu)) \right\|^{2}_{\mathcal{L}(\mc{K},L^{2}(\Theta))}}
\newcommand{\nrf}{\left\|f(\zeta(\pdu)) \right\|_{\mathcal{L}(\mc{K}(\Theta),L^{2}(\Theta))}}
\newcommand{\lkl}{\mathcal{L}(\mc{K}(\Theta),L^{2}(\Theta))}
\newcommand{\dms}{(\mc{D}_{m})_{\minn}}
\newcommand{\dm}{\mc{D}_{m}}
\newcommand{\nrb}{\left\|}
\newcommand{\nre}[1][L^{2}(\Theta)]{\right\|_{#1}}
\newcommand{\nres}[1][L^{2}(\Theta)]{\right\|^{2}_{#1}}
\newcommand{\nrls}[1][\Lambda]{\right\|^{2}_{#1}}
\newcommand{\mb}{\mathbb}
\newcommand{\mc}{\mathcal}
\newcommand{\lb}{\left[}
\newcommand{\rb}{\right]}
\newcommand{\xdo}[1][]{X_{\mc{D}#1,0}}
\newcommand{\xdm}[1][N_{m}+1]{X^{#1}_{\mc{D}_{m}}}
\newcommand{\pd}{\Pi_{\mc{D}}}
\newcommand{\un}[1][n]{u^{(#1)}}
\newcommand{\vn}[1][n]{v^{(#1)}}
\newcommand{\uo}{u^{(0)}}
\newcommand{\uno}{u^{(n+1)}}
\newcommand{\bs}{\boldsymbol}
\newcommand{\pdm}[1][m]{\Pi_{\mc{D}_{#1}}}
\newcommand{\Pd}{P_{\mc{D}}}
\newcommand{\Pdm}{P_{\mc{D}_{m}}}
\newcommand{\dd}{\nabla_{\mc{D}}}
\newcommand{\ddm}{\nabla_{\mc{D}_{m}}}
\newcommand{\ddh}{d^{(n+\frac{1}{2})}_{\mc{D}}}
\newcommand{\ddhm}{d^{(n+\frac{1}{2})}_{\mc{D}_{m}}}
\newcommand{\mdm}[1][n]{M_{\mc{D}_{m}}}
\newcommand{\pdu}[1][n]{\pd\un[#1]}
\newcommand{\pduo}{\pd\uno}
\newcommand{\Rm}{(\mc{R}_{m})_{\minn}}
\newcommand{\rmm}{\mc{R}_{m}}
\newcommand{\gdbuo}{\dd \zeta( \uno) }
\newcommand{\gdbu}{\dd \zeta( \un) }
\newcommand{\gdbl}[1][n]{\dd \zeta( \un[#1]) }
\newcommand{\gdbuz}{\dd \zeta( \un[0]) }
\newcommand{\gdbf}[1][u]{\dd \zeta( #1)}
\newcommand{\dlw}{\hat{d} _{L^{2}_{\text{w}}(\Theta)}}
\newcommand{\fr}{\frac}
\newcommand{\pdbul}[1][n]{\pd \zeta( \un[#1]) }
\newcommand{\bpuo}{\zeta( \pd \uno)  }
\newcommand{\bpu}{\zeta( \pd\un)  }
\newcommand{\bpul}[1][n]{\zeta( \pd \un[#1])  }
\newcommand{\bpd}{\bpuo-\bpu }
\newcommand{\buo}{\zeta\left( \uno\right)}
\newcommand{\dn}[1][\cdot]{|#1|_{*,\mc{D}}}
\newcommand{\dne}[1][r]{\right|^{#1}_{*,\mc{D}}}
\newcommand{\bu}{\zeta( \un)}
\newcommand{\zpo}{\Xi( \pduo)}
\newcommand{\zp}[1][n]{\Xi( \pdu[#1])}
\newcommand{\izf}{\ith \Xi( \pd u) }
\newcommand{\lbd}{\Lambda}
\newcommand{\iz}[1][n]{\ith \zp[#1] dx}
\newcommand{\izd}{\ith\left( \zpo-\zp\right)dx }
\newcommand{\lpdo}{\Lambda }%\left(\cdot,\pduo \right)
\newcommand{\lpd}[1][n]{\Lambda }%\left(\cdot,\pdu[#1] \right)
\newcommand{\fpu}[1][u]{f(\pd \zeta ( #1)) }
\newcommand{\fpb}[1][n]{f(\pdbul[#1] ) }
\newcommand{\lw}[1][n+1]{\Delta^{(#1)}W}
\newcommand{\lwm}[1][n+1]{\Delta^{(#1)}\ol{W}_{m}}
\newcommand{\ith}{\int_{\Theta}}
\newcommand{\ol}{\overline}
\newcommand{\hb}{H^{\beta}(0,T,L^{2}(\Theta))}
\newcommand{\hbt}{H^{\beta}(0,T)}
\newcommand{\linf}{L^{\infty}(0,T; L^{2}(\Theta))}
\newcommand{\lrw}{L^{4}(0,T;L^{2}_{\text{w}}(\Theta))}
\newcommand{\ltw}{L^{2}_{\text{w}}(\Theta)}
\newcommand{\ltl}{L^{2}(0,T;L^{2}(\Theta))}
\newcommand{\ltlft}{L^{4}(0,T;L^{2}(\Theta))}
\newcommand{\lilws}{L^{\infty}(0,T;L^{2}(\Theta))_{\text{w*}}}
\newcommand{\ctl}{C(0,T;L^{2}(\Theta))}
\newcommand{\ltldw}{L^{2}(0,T;L^{2})^{d}_{\text{w}}(\Theta)}
\newcommand{\pdmu}{\pdm\ol{u}_{m}}
\newcommand{\Pdmphi}{\pdm P_{\mc{D}_{m}}\phi_{i}}
\newcommand{\bea}[1][]{\begin{equation}\label{#1}\begin{aligned}}
\newcommand{\eea}{\end{aligned}\end{equation}}
\newcommand{\beas}{\begin{equation*}\begin{aligned}}
\newcommand{\eeas}{\end{aligned}\end{equation*}}
\newcommand{\smk}[1]{\sum_{n=0}^{#1}}
\newcommand{\smn}[1][N-\ell]{\sum_{n=1}^{#1}}
\newcommand{\sml}[1][\ell-1]{\sum_{i=0}^{#1}}
\newcommand{\umu}[1][\mu]{\underline{#1}}
\newcommand{\omu}[1][\mu]{\overline{#1}}
\newcommand{\dt}{\delta t_{\mc{D}}}
\newcommand{\dtm}{\delta t_{\mc{D}_{m}}}
\newcommand{\expc}{\mathbb{E}\lb}
\newcommand{\mon}[1][n]{\max\limits_{1\le #1\le N}}
\newcommand{\mxk}[1][k]{\max\limits_{0\le #1\le N-1}}
\newcommand{\onl}{\mathbbm{1}_{[t^{(n)},t^{(n+\ell)}]}(t)}
\newcommand{\trq}{\text{Tr}\left( \mathcal{Q}\right) }
\newcommand{\slm}[1][\phi\in \mc{A}]{\sup\limits_{#1}}
\newcommand{\nf}{\mathop{\rm nf}}
\newcommand{\mdt}{M_{\mc{D}}(t)}
\newcommand{\md}[1][n]{M^{(#1)}_{\mc{D}}}
\newcommand{\mv}[1][u]{\mathbf{#1}}
\newcommand{\minn}[1][m]{#1\in\mb{N}}
\newcommand{\oM}{\ol{M}}
\newcommand{\ou}{\ol{u}}
\numberwithin{equation}{section}
\newcommand{\email}[1]{\href{mailto:#1}{#1}}
\newcommand{\logLogSlopeTriangle}[5]
{
    \pgfplotsextra
    {
        \pgfkeysgetvalue{/pgfplots/xmin}{\xmin}
        \pgfkeysgetvalue{/pgfplots/xmax}{\xmax}
        \pgfkeysgetvalue{/pgfplots/ymin}{\ymin}
        \pgfkeysgetvalue{/pgfplots/ymax}{\ymax}

        % Calculate auxilliary quantities, in relative sense.
        \pgfmathsetmacro{\xArel}{#1}
        \pgfmathsetmacro{\yArel}{#3}
        \pgfmathsetmacro{\xBrel}{#1-#2}
        \pgfmathsetmacro{\yBrel}{\yArel}
        \pgfmathsetmacro{\xCrel}{\xArel}
        %\pgfmathsetmacro{\yCrel}{ln(\yC/exp(\ymin))/ln(exp(\ymax)/exp(\ymin))} 

        \pgfmathsetmacro{\lnxB}{\xmin*(1-(#1-#2))+\xmax*(#1-#2)} % in [xmin,xmax].
        \pgfmathsetmacro{\lnxA}{\xmin*(1-#1)+\xmax*#1} % in [xmin,xmax].
        \pgfmathsetmacro{\lnyA}{\ymin*(1-#3)+\ymax*#3} % in [ymin,ymax].
        \pgfmathsetmacro{\lnyC}{\lnyA+#4*(\lnxA-\lnxB)}
        \pgfmathsetmacro{\yCrel}{\lnyC-\ymin)/(\ymax-\ymin)}

        % Define coordinates for \draw. MIND THE 'rel axis cs' as opposed to the 'axis cs'.
        \coordinate (A) at (rel axis cs:\xArel,\yArel);
        \coordinate (B) at (rel axis cs:\xBrel,\yBrel);
        \coordinate (C) at (rel axis cs:\xCrel,\yCrel);

        % Draw slope triangle.
        \draw[#5]   (A)-- node[pos=0.5,anchor=north] {\scriptsize{1}}
                    (B)-- 
                    (C)-- node[pos=0.,anchor=west] {\scriptsize{#4}} %% node[pos=0.5,anchor=west] {#4}
                    cycle;
    }
}
\begin{document}

\title{Numerical analysis of the stochastic Stefan problem}
\author[1]{J\'er\^ome Droniou\footnote{\email{jerome.droniou@monash.edu}}}
\affil[1]{School of Mathematics, Monash University, Australia}
\author[1]{Muhammad Awais Khan\footnote{\email{muhammad.khan@monash.edu}}}
\author[1]{Kim-Ngan Le\footnote{\email{Ngan.Le@monash.edu}}}

\maketitle
 \justify

\begin{abstract}
	The gradient discretisation method (GDM) -- a generic framework encompassing many numerical methods -- is studied for a general stochastic Stefan problem with multiplicative noise. The convergence of the numerical solutions is proved by compactness method using discrete functional analysis tools, Skorokhod theorem and the martingale representation theorem. The generic convergence results established in the GDM framework are applicable to a range of different numerical methods, including for example mass-lumped finite elements, but also some finite volume methods, mimetic methods, lowest-order virtual element methods, etc. Theoretical results are complemented by numerical tests based on two methods that fit in GDM framework.
\end{abstract}

\small\textsc{Keywords}: Stefan equation, stochastic PDE, numerical methods, gradient discretisation method, convergence analysis

\section{Introduction}
In this paper, we study the stochastic Stefan problem (SSP) with multiplicative noise of the form
\bea\label{eq:mp}
&du-\text{div}[\Lambda\nabla\zeta(u)]dt=f(\zeta(u))dW_{t}\quad \text{in}\quad \Theta_{T}:=(0,T)\times \Theta,\nonumber\\&
u(0,\cdot)=u_{0} \quad \text{in}\quad \Theta,\nonumber\\&
\zeta(u)=0\quad \text{on}\quad (0,T)\times \partial\Theta,
\eea
where $T>0$, $\Theta$ is an open bounded domain in $\mb{R}^{d}\left( d\ge 1\right)$, $\Lambda$ is a diffusion coefficient and $W$ is a $\mc{Q}$-Wiener process. Here $\zeta$ is a globally non-decreasing Lipschitz continuous function passing through origin and coercive, see Section \ref{sec:Assum-Nota} for detail.

The deterministic Stefan problem describes the behavior of phase transition during the evolution of two thermodynamical states, say solid and liquid.  This problem has its name from Jo\v{z}ef Stefan (1835-1893) due to his research on solid-liquid phase changes in formation of ice in polar seas \cite{stefan1891theorie}. Deterministic moving boundary problems have been studied extensively in the second half of the past century, see \cite{lunardi2004introduction} and references therein. Numerical analysis of the problem is developed in~\cite{Meyer,Jerome,Elliott} and a huge bibliography can be found in~\cite{Verdi}.  Convergence of variety of gradient schemes (GS) to approximate its solution is provided in \cite{eymard2013gradient} and \cite[Chapter 6]{droniou2018gradient}.

In the past decade, several authors have started investigating stochastic perturbation of the classical Stefan problem. There are numerous examples of applications, such as climate models, diffusion of lithium-ions in lithium-ion batteries, modelling steam chambers for petroleum extraction and oxygen diffusion in an absorbing tissue, which are described by stochastic Stefan-type problems.
  
A significant literature is available on proving the existence and uniqueness of SSP. In \cite{barbu2002two}, a semigroup approach was employed to obtain a mild solution. Using the enthalpy function with additive noise, the two phase free boundary problem is transformed into a stochastic evolution equation of porous media type with the fixed boundary conditions. Applying a coordinate transform, \cite{kim2012stochastic} studied the linear SSP on one dimensional unbounded domains ($d=1$) assuming the random field Brownian in time but correlated (colored) in space. Extending these results,  \cite{keller2016stefan} established the existence and uniqueness of local strong solutions by using the interpolation theory. The analysis is based on the theory of mild and strong solutions proposed in \cite{Prato2014stochastic}. A further extension of these results under Robin boundary conditions was done in \cite{muller2018stochastic} (in which the moving interface between the two phases might have unbounded variation). 

Several works on the numerical approximation of stochastic PDEs driven by Wiener processes have been carried out, such as the heat equation, the Navier--Stokes model, the $p$-Laplace equation, etc. (see, e.g., \cite{droniou2020design,brzezniak2013finite,Hausenblas,gyongy1999lattice,millet2005implicit,walsh2005finite,Yan,zhang2017numerical,diening.hofmanova.ea:23}). However, to the authors' knowledge, there is no available literature on the numerical analysis of the Stochastic Stefan problem.
  
  This paper is primarily focused on the numerical analysis of SSP encompassing well known discrete schemes. We use a generic framework known as gradient discretisation method (GDM) to propose a numerical scheme approximating solutions of SSP. The GDM covers a large class of conforming and nonconforming schemes such as finite volume methods, Galerkin methods (including mass-lumped finite element), mixed finite element methods, hybrid high-order and virtual element methods, etc. A thorough discussion on the analysis and applications of the GDM method is given in the monograph \cite{droniou2018gradient}.  Moreover, we prove the convergence and stability of the numerical solutions, therefore, there is no need to prove convergence for each specific method which simplifies the analysis.
    
Our convergence analysis technique is based on the discrete functional analysis tools, Skorokhod theorem, Kolmogorov test, and martingale representation theorem. We used the similar idea as prescribed in \cite{droniou2020design} to show the existence of a weak martingale solution to the SSP. 

This paper is organised as follows: Section \ref{sec:GS_and_main_result} starts with the introduction of assumptions and notations followed by the description of the GDM framework and the related gradient scheme for the SSP; two examples of methods fitting the framework are presented (the mass-lumped $\mb{P}_{1}$ finite element (MLP1) method and hybrid mimetic method (HMM)), and the section concludes with the statement of the main convergence result. In Section \ref{sec:aprior.estimates} we provide \textit{a priori} estimates of the approximated solutions. Section \ref{sec:compactness} details the following convergence steps: (1) tightness, in appropriate spaces, of  a family of random variables representing in particular the solution to the GS and their gradient, (2) almost sure convergence (up to a change of probability space) in appropriate norms, and (3) continuity of the limits of the numerical solution and of the martingale involving the Wiener process. In Section \ref{sec:identify_limit}, the limit of the martingale part is identified, after which the main convergence result is proved. Numerical results, based on the two examples of GDs presented in Section \ref{sec:GS_and_main_result}, are provided in Section \ref{sec:numerical_examples} to validate the convergence results and their bounds.

\section{Gradient scheme and main results}\label{sec:GS_and_main_result}

\subsection{Assumptions and notations}\label{sec:Assum-Nota}

We first introduce the notations and assumptions that are used in the rest of the paper. For a given $s\in [0,T]$, we set $\Theta_{s}=(0,s)\times \Theta$. Throughout this paper, we assume the following.
\begin{itemize}
	\item[A1.] $\zeta: \mb{R}\rightarrow\mb{R}$ is Lipschitz continuous with Lipschitz constant $L^{-1}_{\zeta}$. We also assume that $\zeta$ is  non decreasing, satisfies $\zeta(0)=0$ and is coercive in the sense that there exists $c,d>0$ such that $|\zeta(s)|\ge c|s|-d$ for all $s\in\mb{R}$.
\end{itemize}
We set  $$\Xi(z):=\int_{0}^{z}\zeta (s)ds, \quad \forall z\in \mb{R}.$$
\begin{itemize}
	\item[A2.]  $\Lambda$ is a symmetric measurable tensor on $\Theta$ that is uniformly elliptic and bounded, that is:
	\beas
	\exists\; \umu, \omu \in (0,\infty)\;\; \text{s.t.}\;\;            \umu|\xi|^{2}\le\Lambda(x) \xi \cdot \xi \le \omu|\xi|^{2}\quad \forall   \xi\in \mb{R}^{d},\;\; \text{for a.e} \;\; x\in \Theta.
	\eeas
 For $F:\Theta\rightarrow \mb{R}^d$, we denote $\nrb F\nrls:=\ips \Lambda F,F\ipe=\int_\Theta \Lambda(x)F(x)\cdot F(x)dx$.
    \item[A3.] $u_{0}:\Theta\rightarrow \mb{R}$ is a measurable function and $\Xi(u_0)\in L^1(\Theta)$.
    \item[A4.] Let   $(\Omega, \mathcal{F},\mathbb F=(\mathcal{ F}_{t})_{t\in [0,T]},\mathbb P)$ be a stochastic basis, that is, $(\Omega, \mathcal{F},\mathbb P)$ is a probability space equipped with an increasing family of $\sigma$-algebras $\{\mathcal{F}_{t}\},\; t\in [0,T]$, called filtration. Assume an $\mathbb F$-adapted Wiener process $W=\{W(t);t\in [0,T]\}$ taking values in a separable Hilbert space $\mathcal{K}(\Theta)$ with covariance operator $\mathcal{Q}$ such that $Tr(\mathcal{Q})<\infty$. $W$ can be written in the form $W(t)=\sum_{k=1}^{\infty}q_{k}W_{k}(t)e_{k},$ where $\{e_{k},k\ge 1\}$ is an orthonormal basis of $\mathcal{K}(\Theta)$ with corresponding eigenvalues $q_{k}$ such that $\sum_{k=1}^{\infty} q^{2}_{k}<\infty$, and  $\{W_{k}:k\ge 1\}$ is a family of independent $\mathbb F$-adapted real-valued Wiener processes.
	\item[A5.] Let $\lkl$ be the Banach space of Hilbert-Schmidt operators \cite[Appendix C]{Prato2014stochastic} with norm denoted as $\nrb \cdot \nre[\lkl]$. We assume that the operator $f:L^{2}\rightarrow \lkl$ is continuous and that there exist $C_{1}, C_{2} >0$ such that for any $w\in L^{2}(\Theta)$ and $\Xi(w)\in L^{1}(\Theta)$% and $\Xi(w)\in L^{1}(\Theta)$
	\bea\label{eq:a1} \left\|f(\zeta (w)) \right\|^{2}_{\mathcal{L}(\mc{K}(\Theta),L^{2}(\Theta))}\le C_{2}\ith\Xi(  w)dx+C_{1}.\eea\\ 
\end{itemize}
Throughout this article, $C$ denotes a generic constant depends on $f,T,\mc{Q},u_{0},\Lambda$ and $\zeta$. We will only mention any further dependencies in its subscript.

	\subsection{Gradient scheme}
	
	We recall here the main definitions of the gradient discretisation method.	
	\begin{definition}\label{def:GD}
		$\mc{D}=(\xdo,\pd,\dd,\mc{I}_{\mc{D}},(\tn)_{n=0,\cdots,N})$ is a space-time gradient discretisation (GD) for the homogeneous Dirichlet boundary conditions, with piecewise constant reconstruction, if the following properties hold:
		\begin{enumerate}
			\item[(i)] $\xdo$ is a finite dimensional vector space of discrete unknowns,
			\item[(ii)] The linear map $\pd:\xdo \rightarrow L^{\infty}(\Theta)$ is a piecewise constant reconstruction operator in the following sense: there exist a basis $(\mv[e]_{i})_{i\in B}$ of $\xdo$ and a family of disjoint subsets $(\Theta_{i})_{i\in B}$ of $\Theta$ such that $\pd u=\sum_{i\in B}u_{i}\mathbbm{1}_{\Theta_{i}}$ for all $u=\sum_{i\in B}u_{i}\mv[e]_{i}\in\xdo$, where $\mathbbm{1}_{\Theta_{i}}$ is the characteristic function of $\Theta_{i}$,
			\item[(iii)] The linear map $\dd: \xdo\rightarrow (L^{2}(\Theta))^{d}$ gives a reconstructed discrete gradient such that the mapping $ v \mapsto \nrb\dd v\nre[L^{2}(\Theta)]$ is a norm on $\xdo$,
			\item[(iv)] 
            $\mc{I}_{\mc{D}}:Y\rightarrow\xdo$ is an interpolation operator that is used to create, from an initial condition in $Y:=\{v\in L^{2}(\Theta): \zeta(v)\in H^1_0(\Theta)\}$, a discrete vector in the space of unknowns,
			\item[(v)] $\tn[0]=0<\tn[1]<\cdots<\tn[N]=T$ is a uniform time discretisation with a constant time step $\dt:=\tn[n+1]-\tn$.
		\end{enumerate}
  
    For any family $(\vn)_{n=0,\cdots,N}\in\xdo^{N+1}$, we defined piecewise-constant-in-time functions $\pd v:[0,T]\rightarrow L^{\infty}(\Theta)$, $\dd v:(0,T]\rightarrow (L^{2}(\Theta))^{d}$ and $\ddh v\in L^\infty$ by: for $n=0,\cdots,N-1$, for any $t\in(\tn,\tn[n+1]]$ and for a.e. $\bs{x}\in \Theta$
	\[
	\begin{aligned}
	&\pd v(0,\bs{x}):=\pd \vn[0](\bs{x}), \quad \pd v(t,\bs{x}):=\pd \vn[n+1](\bs{x}),\\
	&\dd v(t,\bs{x}):=\dd \vn[n+1](\bs{x}),\quad \ddh v(\bs{x}):=\pd \vn[n+1](\bs{x})-\pd \vn(\bs{x}).
	\end{aligned}
	\]
 
     Let $g:\mb{R}\rightarrow\mb{R}$ be such that $g(0)=0$. For $v=\sum_{i\in B}v_{i}\mv[e]_{i}\in\xdo$, we set $g(v)=\sum_{i\in B}g(v_i)\mv[e]_{i}\in \xdo$. The piecewise constant feature of $\pd$ implies \bea\label{eq:pcro}\pd g(v)=g(\pd v)\quad \forall v\in \xdo.\eea
	\end{definition}
    We define the filtration $(\mc{F}^{n}_{N})_{0\le n\le N}$ by:
	 $$\mc{F}^{n}_{N}:=\sigma\{W(t^{k}):0\le k\le n\}\qquad\forall n=0,\ldots,N.$$
 
   \begin{algo}[Gradient Scheme (GS) for \eqref{eq:mp}]\label{algo}
	Set $\un[0]:=\mc{I}_{\mc{D}}u_{0}$ and take random variables $u(\cdot)=(\un(\omega,\cdot))_{n=0,\cdots,N}\in \xdo^{N+1}$ such that $u$ is adapted to the filtration $(\mc{F}^{n}_{N})_{0\le n\le N}$ and, for any $\phi\in\xdo$ and for almost every $\omega\in\Omega$, we have, for $n=0,\cdots, N-1$,
	\begin{equation}\label{eq:gs}
	\ip[\ddh u(\omega), \pd\phi]+\dt\ip[\lpdo\gdbuo(\omega),\dd\phi]=\ip[\fpb(\omega)\lw(\omega),\pd\phi],
	\end{equation}
	where $\lw=W(\tn[n+1])-W(\tn)$.	
\end{algo}

The convergence of the scheme is analysed along a sequence $(\mc{D}_{m})_{\minn}$ of GDs (such a sequence plays the role, in standard mesh-based schemes, of sequences of meshes with size going to zero). To establish this convergence, the sequence must satisfy a few key properties: consistency, limit-conformity and compactness. Regarding the boundedness of $\dtm\|\ddm\zeta(\mc{I}_{\dm}(\phi))\|_\Lambda$ in the definition of consistency, we refer to Remark \ref{rem:consistency}.

    \begin{definition}[Consistency]\label{def:consistency}
 	A sequence $\dms$ of space-time GDs is said to be consistent if
    \begin{enumerate}
        \item[(a)] for all $\phi\in H_{0}^{1}(\Theta)$, we have $S_{\dm}(\phi)\rightarrow 0$ as $m\rightarrow \infty$, where
        \begin{equation}\label{eq:def.SD}
 	  S_{\dm}(\phi):=\min_{w\in\xdm[]}\left(\nrb \pdm w-\phi\nre+\nrb \ddm w-\nabla\phi\nre \right),
        \end{equation}
        \item[(b)]\label{def:consistency.b} for all $\phi\in L^{2}(\Theta)$, $\pdm\mc{I}_{\dm}\phi\rightarrow \phi$ in $L^{2}(\Theta)$ and %
        $\dtm\|\ddm\zeta(\mc{I}_{\dm}(\phi))\|_\Lambda$ remains bounded as $m\rightarrow \infty$,
        \item[(c)] $\dtm \rightarrow 0$ as $m\rightarrow \infty$.
    \end{enumerate} 
\end{definition}
 
 \begin{definition}[Limit Conformity]\label{def:limitconformity}
 	A sequence $\dms$ of space-time GDs is said to be limit conforming if, for all $\bs{\phi}\in H^{\text{div}}(\Theta):=\{\bs{\phi}\in (L^{2})^{d}:\text{div}\bs{\phi}\in L^{2}(\Theta)\}$, we have $W_{\dm}(\phi)\rightarrow 0$ as $m\rightarrow \infty$, where
 	\[
 	W_{\dm}(\bs{\phi}):=\max_{v\in X_{\dm}\setminus\{0\}}\frac{\left|\ip[\ddm v, \bs{\phi}]+\ip[\pdm v,\text{div} \bs{\phi}]  \right| }{\nrb\ddm v\nre}.
 	\]
 \end{definition}
 
\begin{definition}[Compactness]\label{def:compactness} A sequence of $\dms$ of space-time GDs is said to be compact if 
\[
\lim\limits_{\xi\rightarrow 0}\sup\limits_{\minn} T_{\dm}(\xi)=0,
\]
where, extending $\pdm v$ by $0$ outside $\Theta$,
\[
T_{\dm}(\xi):=\max_{v\in X_{\dm}\setminus\{0\}}\frac{\nrb\pdm v(\cdot+\xi)-\pdm v\nre[L^{2}(\mb{R}^{d})]}{\nrb\ddm v\nre[L^{2}(\Theta)]},\quad \forall \xi \in \mb{R}^{d}.
\]
\end{definition}	

	A limit conforming or compact sequence of GDs also satisfy the following important property, which states a (uniform) discrete Poincar\'e inequality.

	\begin{lemma}[Coercivity of sequence of GDs]\label{lem:coercivity}
		If a sequence $\dms$ of space-time GDs is compact or limit conforming then it is coercive: there exists a constant $\rho$ such that 
		\begin{equation}\label{eq:poincare}
		  \max_{v\in X_{\dm}\setminus\{0\}}\frac{\nrb\pdm v\nre}{\nrb\ddm v\nre}\le \rho\quad \forall \minn.
	  \end{equation}
	\end{lemma}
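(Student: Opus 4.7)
I handle the two hypotheses separately.

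\emph{Compactness case.} The plan is an iterated-translation argument exploiting that $\pdm v$ (extended by zero) is supported in the bounded set $\Theta$. From Definition~\ref{def:compactness} I fix $\xi_{0}\in\mb{R}^{d}\setminus\{0\}$ small enough that $\sup_{m}T_{\dm}(\xi_{0})\le 1$. For any $K\in\mb{N}$ and $v\in X_{\dm}$, the triangle inequality combined with translation invariance of the $L^{2}(\mb{R}^{d})$-norm gives
\[
\nrb\pdm v(\cdot+K\xi_{0})-\pdm v\nre[L^{2}(\mb{R}^{d})]\le K\,T_{\dm}(\xi_{0})\,\nrb\ddm v\nre\le K\,\nrb\ddm v\nre.
\]
Choosing $K$ large enough that $|K\xi_{0}|$ exceeds twice the diameter of $\Theta$, the supports of $\pdm v(\cdot+K\xi_{0})$ and $\pdm v$ are disjoint, so the left-hand side equals $\sqrt{2}\,\nrb\pdm v\nre$; rearranging yields coercivity with constant $\rho=K/\sqrt{2}$, independent of $m$ and $v$.

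\emph{Limit-conformity case.} Given $v\in X_{\dm}$, let $\psi\in\hoz(\Theta)$ solve $-\Delta\psi=\pdm v$, so that $\bs{\phi}:=\nabla\psi\in H^{\text{div}}(\Theta)$ with $\text{div}\,\bs{\phi}=-\pdm v$, and the standard a priori estimate gives $\nrb\nabla\psi\nre\le C\nrb\pdm v\nre$, hence $\|\bs{\phi}\|_{H^{\text{div}}}\le C'\nrb\pdm v\nre$. Plugging $\bs{\phi}$ into the bound behind Definition~\ref{def:limitconformity} produces
\[
\left|\ip[\ddm v,\nabla\psi]-\nrb\pdm v\nres\right|\le W_{\dm}(\nabla\psi)\,\nrb\ddm v\nre,
\]
and Cauchy--Schwarz on the inner product then leads to $\nrb\pdm v\nres\le\big(\nrb\nabla\psi\nre+W_{\dm}(\nabla\psi)\big)\nrb\ddm v\nre$. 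The argument closes provided I can assert a uniform bound $W_{\dm}(\bs{\phi})\le C\|\bs{\phi}\|_{H^{\text{div}}}$ with $C$ independent of $m$. For each $m$, $W_{\dm}$ is a symmetric, continuous, sublinear functional on $H^{\text{div}}(\Theta)$ (continuity follows from the non-uniform Poincaré inequality available on the finite-dimensional space $X_{\dm}$), while limit-conformity delivers pointwise convergence, hence pointwise boundedness, of $\{W_{\dm}\}_{m}$; a Baire-category-based uniform boundedness principle for sublinear functionals then supplies the uniform norm bound. Dividing through by $\nrb\pdm v\nre$ yields $\nrb\pdm v\nre\le C''\,\nrb\ddm v\nre$, the required coercivity.

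\emph{Main obstacle.} The subtle point is the uniform control on $W_{\dm}$ in the limit-conformity case; a pure weak-compactness contradiction argument would identify the weak $L^{2}$ limit of $\pdm v_{m}/\nrb\pdm v_{m}\nre$ as zero but stall without any way to upgrade the convergence to strong, so the sublinear Banach--Steinhaus step is essential. The translation trick in the compactness case, by contrast, is elementary once the support property of $\pdm v$ is exploited.
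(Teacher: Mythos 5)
Your proof is correct; the paper itself states Lemma \ref{lem:coercivity} without proof (it is a standard result imported from the GDM monograph \cite{droniou2018gradient}), and both of your arguments --- the iterated-translation/disjoint-support trick for the compactness case, and the dual problem $-\Delta\psi=\pdm v$ combined with a Baire/Banach--Steinhaus uniform bound on the sublinear functionals $(W_{\mc{D}_m})_{m}$ for the limit-conformity case --- coincide with the standard proofs given there. Your closing remark is also accurate: a naive weak-compactness contradiction argument only identifies the weak $L^2$ limit as zero, which does not contradict unit norm, so the uniform boundedness step is genuinely needed.
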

	
	\subsection{Examples of gradient discretisations}\label{sec:examplesGS}
	
	We present here two examples of GDs: the mass-lumped $\mathcal P^1$ (MLP1) finite elements, and the hybrid mimetic mixed method (HMM). Both of them are known to satisfy the properties above under standard mesh regularity assumptions \cite{droniou2018gradient}.
 
 We first specify the Gradient discretisation \ref{def:GD} for MLP1. This GD is based on a conforming triangulation of the domain $\Omega$.
  Let $\xdo=\{v=(v_s)_{s\in \mc{V}}: v_s\in \mb{R},\; v_s=0\;\text{if}\; s\in\partial \Theta \}$, where $\mc{V}$ is the set of all vertices. To define the reconstruction operator $\pd$, we construct a dual mesh $(\Theta_s)_{s\in \mc{V}}$, which can for example be defined by setting $\Theta_s$ as the polygon obtained by linking the centers of the edges and cells having $s$ as a vertex (see \cite[Section 8.4]{droniou2018gradient} for more detail). We then define the piecewise constant reconstruction on this dual mesh $\pd$ by:
 $$\forall u\in\xdo,\quad \forall s\in \mc{V}, \quad \pd u=u_{s}\;\;\text{on}\;\;\Theta_s.$$% 
 Further, the reconstructed gradient $\dd$ is the gradient of piecewise linear function based on vertices, i.e., $\dd v=\sum_{s\in V}v_s \nabla \phi_s$ where  $(\phi_s)_{s\in\mc{V}}$ is the canonical basis of the conforming $\mathbb{P}^1$ finite element space on the triangulation.

The second method, HMM, is a polytopal method and therefore can be applied to more general meshes made of polygons, polyhedra, etc. We therefore consider a polytopal mesh (see \cite[Definition 7.2]{droniou2018gradient}). To describe HMM gradient discretisation, let
$$
\xdo=\{v=((v_K)_{K\in \mc{M}},(v_\sigma)_{\sigma\in \mc{E}}): v_K,\;v_\sigma\in \mb{R},\; v_\sigma=0\;\text{if}\; \sigma \subset \partial \Theta\}
$$
be the space of discrete unknowns, where $\mc{M}$ and $\mc{E}$ denotes the sets of cells and edges, respectively. For any $v\in \xdo$, the piecewise constant reconstruction $\pd$ is usually defined by: for all $K\in\mc{M}$ and $v\in\xdo$, $(\pd v)|_{K}=v_K$. However, in the numerical tests of Section \ref{sec:numerical_examples}, we use a slightly modified version of the HMM. Specifically, the reconstruction $\pd$ builds the mass of the solution from both cell and edge unknowns: $\pd v$ is piecewise constant equal to $v_K$ on a domain $\mathfrak D_K$ in $K$, and to $v_\sigma$ on a domain $\mathfrak D_\sigma$ around $\sigma$. In practice, these domains do not need to be specified, only their volume need to be fixed, which is done by selecting a parameter $0\le r\le 1$ and setting $|\mathfrak D_K|=r|K|$, $|\mathfrak D_\sigma|=(1-r)(|K|+|L|)$ (where $K,L$ are the two cells around $\sigma$). The integrals involving the piecewise constant functions thus constructed can then be computed using only these volumes: for example,
\[
\int_\Omega \pd v \pd w=\sum_{K\in\mc{M}}|\mathfrak D_K|v_K w_K+\sum_{\sigma\in\mc{E}}|\mathfrak D_\sigma|v_\sigma w_\sigma.
\]
This modification of the usual HMM ensures that all unknowns (cell and edge) have a strictly positive contribution in the mass matrix, which facilitates the convergence of the Newton iteration (the total number of iterations is reduced). The gradient reconstruct $\dd$ is built from the consistent polytopal gradients
\bea\label{eq:polytopal_gradient}
\nabla_K v=\fr{1}{|K|}\sum_{\sigma\in\mc{E}_{K}}|\sigma|v_\sigma \mv[n]_{K,\sigma}\qquad\forall K\in\mc{M}
\eea
and the stabilizations $R_{K,\sigma}=v_\sigma-v_K-\nabla_k v\cdot (\ol{x}_\sigma-x_K)$, for $K\in\mc{M}$ and $\sigma\in\mc{E}_K$ (set of faces of $K$), where $\mv[n]_{K,\sigma}$ is the outer normal to $K$ on $\sigma$. More precisely, denoting by $D_{K,\sigma}$ the convex hull of the center $\mathbf{x}_K$ of $K$ and $\sigma$, $\dd$ is given by
$$
\begin{aligned}
\forall v\in\xdo\quad \forall K\in \mc{M},\quad \forall \sigma \in \mc{E}_{K}, \quad
 \dd v\big{|}_{D_{K,\sigma}}=\nabla_K v+\fr{\sqrt{d}}{d_{K,\sigma}}R_{K,\sigma}(v)\mv[n]_{K,\sigma},
\end{aligned}
$$
where $d_{K,\sigma}$ is the orthogonal distance between ${x}_K$ and $\sigma\in \mc{E}_{K}$. The stabilization term is required to control the unknowns $(v_K)_{K\in\mc{M}}$ which are not involved in the polytopal gradient.

\subsection{Main results}
We define the solution of \eqref{eq:mp} as follows:
\begin{definition}\label{def:maindefinition}
	Given $T\in (0,\infty)$, a sequence $(\tilde{\Omega},\tilde{\mc{F}}, \tilde{\mb{F}},\tilde{\mb{P}}, \tilde{W}, \tilde{u})
	$ is called a weak martingale solution to \eqref{eq:mp} when it consists of
	\begin{itemize}
		\item[(a)] a probability space  $(\tilde{\Omega},\tilde{\mc{F}}, \tilde{\mb{F}},\tilde{\mb{P}})$ with normal filtration,
		\item[(b)] a $\mc{K}$-valued $\mb{F}$-adapted Wiener process $\tilde{W}$ with the covariance operator $\mc{Q}$,
		\item[(c)] a progressively measurable process $\tilde{u}:[0,T]\times\tilde{\Omega}\rightarrow L^{2}(\Theta)$
	\end{itemize}
such that
\begin{enumerate}
	\item letting $\ltw$ be $L^{2}(\Theta)$ endowed with a weak topology, %there exists a ball $B_{\text{w}}$ in $L^{2}$, endowed with the weak topology, such that 
	$\tilde{u}(\cdot,\omega)\in C([0,T];\ltw)$ for $\mb{P}$-a.s.\ $\omega\in \tilde{\Omega}$,
	\item $\expc\sup\limits_{t\in [0,T]}\displaystyle\ith \Xi\left(\tilde{u}(t)\right)\rb<\infty$,
	\item $\zeta(\tilde{u})\in L^2(0,T;H^1_0(\Theta)$) a.s. and $\expc\left\|\zeta\left( \tilde{u}\right)\right\|^{2}_{L^{2}(0,T;H_{0}^{1}(\Theta))}\rb <\infty$,
	\item for every $t\in [0,T]$ and for all $\psi\in H_{0}^{1}(\Theta)$, $\mb{P}$-a.s.\:
	\begin{equation} \label{eq:notion.ws}
		\ip[\tilde{u}(t),\psi]-\ip[u_{0},\psi]-\int_{0}^{t}\ip[\lpdo\nabla \zeta(\tilde{u}(s)),\nabla\psi]ds=\ip[\int_{0}^{t}f(\zeta(\tilde{u}(s,\cdot)))d\tilde{W}(s),\psi],
		\end{equation}
		where the stochastic integral on the left-hand side is the It\^o integral in $L^{2}(\Theta)$. %$\mathfrak{f}$
\end{enumerate}
\end{definition}

\begin{theorem}\label{th:mt}
	Let $\dms$ be a sequence of GDs that is consistent, limit-conforming, and compact. For every $m\ge1$, there exist a random process $u_{m}$ solution to the GS \eqref{eq:gs} with $\mc{D}:=\dm$.
	Moreover, there exist a weak martingale solution $(\tilde{\Omega},\tilde{\mc{F}}, \tilde{\mb{F}},\tilde{\mb{P}},\tilde{ W},\tilde{ u})$ to \eqref{eq:mp} in the sense of Definition \ref{def:GD} such that for a sequence $(\tilde{u}_m)_m$ of random processes defined on $\tilde{\Omega}$, sharing the same law as $(u_m)_m$, up to a subsequence the following convergences hold: $\mb{\tilde{P}}$-a.s.\ $\omega\in\tilde{\Omega}$,
    $$
    \pdm \tilde{u}_{m}(\omega)\rightarrow\tilde{u}(\omega)\quad \text{weakly-}*\text{in} \quad  L^{\infty}(0,T,L^{2}(\Theta)),
    $$
    $$
    \pdm \zeta(\tilde{u}_{m}(\omega))\rightarrow \zeta(\tilde{u}(\omega)) \quad \text{strongly in} \quad  L^{2}(\Theta_{T}),
    $$
    $$
    \ddm \zeta(\tilde{u}_{m}(\omega))\rightarrow \nabla\zeta(\tilde{u}(\omega)) \quad \text{weakly in} \quad  L^{2}(\Theta_{T})^{d}.
    $$
\end{theorem}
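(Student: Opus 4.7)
The plan follows a compactness/Skorokhod approach in the spirit of \cite{droniou2020design}, adapted to the degenerate parabolic nature of the Stefan problem. First I would establish the existence of a solution to the GS \eqref{eq:gs} at each time step: given $\un(\omega)$ and the increment $\lw(\omega)$, equation \eqref{eq:gs} is a finite-dimensional nonlinear variational problem for $\uno\in\xdo$. Existence follows from a Brouwer-type topological argument, using the monotonicity and coercivity of $\zeta$ (Assumption A1) together with the uniform ellipticity of $\Lambda$ (A2); measurability with respect to $\mc{F}^{n+1}_{N}$ is obtained by standard measurable selection, so that iterating in $n$ yields an adapted process $(\un)_{n=0,\ldots,N}$.

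Next I would invoke the a priori estimates of Section \ref{sec:aprior.estimates}. Testing \eqref{eq:gs} by $\zeta(\uno)$ and exploiting the identity $\Xi'=\zeta$ together with a discrete It\^o-type expansion of $\ith\Xi(\pdm u_m)dx$ produces, after using Burkholder--Davis--Gundy and the sublinear growth \eqref{eq:a1} of $f\circ\zeta$, uniform bounds of the form $\expc\sup_{n}\ith\Xi(\pdm u^{(n)}_m)dx\rb+\expc\sum_{n}\dtm\nrb\ddm\zeta(u^{(n+1)}_m)\nrls\rb\le C$. The closure of the Gronwall loop crucially relies on Assumption A5, which controls the Hilbert--Schmidt norm of the noise operator by the energy $\ith\Xi(u)dx$.

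Equipped with these estimates and the compactness of $\dms$, I would establish tightness of the laws of $\pdm u_m$ in $\lilws$, of $\pdm\zeta(u_m)$ in $\ltl$, of $\ddm\zeta(u_m)$ in $\ltldw$, and of the Wiener process in $C([0,T];\mc{K}(\Theta))$. Tightness in $\ltl$ rests on a stochastic analogue of Aubin--Simon: the spatial compactness $T_{\dm}$ combined with a fractional-in-time regularity estimate in $\hb$ obtained via BDG on the martingale increments delivers the required relative compactness. The Skorokhod representation theorem then furnishes an extended probability space $(\tilde\Omega,\tilde{\mc F},\tilde{\mb P})$, new random variables $(\tilde u_m,\tilde W_m)$ sharing the laws of $(u_m,W)$, and a limit $(\tilde u,\tilde W)$ with the announced a.s.\ convergences; standard arguments then identify $\tilde W$ as a $\mc Q$-Wiener process in an appropriate filtration built from $\tilde u$ and $\tilde W$.

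The last and most delicate step is the passage to the limit in \eqref{eq:gs} tested against $\Pdm\psi$ for $\psi\in H^{1}_{0}(\Theta)$. The deterministic bilinear term converges thanks to the weak convergence of $\ddm\zeta(\tilde u_m)$ and limit-conformity, and the initial datum passes by the consistency property $(b)$ of Definition \ref{def:consistency}. The main obstacle is the identification of the limit of the discrete stochastic sum $\sum_n\ith f(\pdm\zeta(\tilde u_m^{(n)}))\lwm\Pdm\psi\,dx$: strong $L^{2}(\Theta_{T})$ convergence of $\pdm\zeta(\tilde u_m)$ and continuity of $f$ in the Hilbert--Schmidt norm ensure convergence of the integrand, but recognising the limiting martingale as a genuine It\^o integral $\int_{0}^{t}f(\zeta(\tilde u))d\tilde W$ requires, as in \cite{droniou2020design}, showing that the limit is a continuous square-integrable $\tilde{\mb F}$-martingale whose quadratic variation coincides with that of the target integral, then applying the martingale representation theorem. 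The continuity of $\tilde u$ in $C([0,T];\ltw)$ and the remaining items of Definition \ref{def:maindefinition} then follow from the limiting variational identity and the regularity of the It\^o integral.
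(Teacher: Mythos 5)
Your overall architecture---existence via a finite-dimensional monotonicity argument with measurable selection, energy estimates, tightness, Jakubowski--Skorokhod, and identification of the stochastic integral through the quadratic variation and the martingale representation theorem---is the same as the paper's. There is, however, a concrete gap in your description of the a priori estimate. Testing \eqref{eq:gs} with $\zeta(u^{(n+1)})$ and using the convexity inequality $\Xi(b)-\Xi(a)\le (b-a)\zeta(b)$ gives
\[
\int_\Theta\Big(\Xi(\Pi_{\mathcal D}u^{(n+1)})-\Xi(\Pi_{\mathcal D}u^{(n)})\Big)dx+\delta t_{\mathcal D}\big\|\nabla_{\mathcal D}\zeta(u^{(n+1)})\big\|_\Lambda^2\le \big\langle f(\Pi_{\mathcal D}\zeta(u^{(n)}))\Delta^{(n+1)}W,\;\zeta(\Pi_{\mathcal D}u^{(n+1)})\big\rangle_{L^2(\Theta)}.
\]
Since $\zeta(\Pi_{\mathcal D}u^{(n+1)})$ is not $\mathcal F_{t^{(n)}}$-measurable, the right-hand side must be split into $\langle f\Delta W,\zeta(\Pi_{\mathcal D}u^{(n+1)})-\zeta(\Pi_{\mathcal D}u^{(n)})\rangle+\langle f\Delta W,\zeta(\Pi_{\mathcal D}u^{(n)})\rangle$; the second piece is a martingale increment with vanishing expectation, but the first, after Young's inequality, leaves a term proportional to $\|\zeta(\Pi_{\mathcal D}u^{(n+1)})-\zeta(\Pi_{\mathcal D}u^{(n)})\|_{L^2(\Theta)}^2$ with \emph{no} counterpart on the left-hand side to absorb it: precisely because $\zeta$ degenerates (has plateaus), the convexity of $\Xi$ does not produce the quadratic increment that one gets for free in the non-degenerate (heat-equation) case, and a crude bound of this term via $\|\zeta(s)\|^2\le 2L_\zeta^{-1}\Xi(s)$ only yields an $O(\sqrt{\delta t_{\mathcal D}})$ contribution per step, which blows up after summation. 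The paper closes the estimate by testing \eqref{eq:gs} a \emph{second} time with $\phi=\zeta(u^{(n+1)})-\zeta(u^{(n)})$, which generates exactly the missing quadratic term $L_\zeta\|\zeta(\Pi_{\mathcal D}u^{(n+1)})-\zeta(\Pi_{\mathcal D}u^{(n)})\|^2_{L^2(\Theta)}$ on the left-hand side (see \eqref{eq:secondtest2} and the sum \eqref{eq:Add69}); this is also the reason for the nonstandard boundedness requirement on $\delta t_{\mathcal D_m}\|\nabla_{\mathcal D_m}\zeta(\mathcal I_{\mathcal D_m}u_0)\|_\Lambda$ in Definition \ref{def:consistency}(b), cf.\ Remark \ref{rem:consistency}. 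As written, your single-test-function estimate does not close.

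A second, smaller omission: after Skorokhod you assert that the limit satisfies the announced convergences, but identifying the strong $L^2(\Theta_T)$ limit of $\Pi_{\mathcal D_m}\zeta(\tilde u_m)$ as $\zeta(\tilde u)$ is a genuinely nonlinear step, since $\Pi_{\mathcal D_m}\tilde u_m$ converges only weakly-$*$; the paper resolves it with the Minty monotonicity trick applied to the weak--strong pair $(\Pi_{\mathcal D_m}\tilde u_m,\Pi_{\mathcal D_m}\zeta(\tilde u_m))$. The remainder of your plan (tightness in the product space via fractional-in-time Sobolev bounds obtained from BDG, continuity of the limit processes, quadratic variation and martingale representation, and the final passage to the limit against $P_{\mathcal D_m}\psi$) matches the paper's Sections 4 and 5.
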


\section{A priori estimates}\label{sec:aprior.estimates}

 In the following lemma, we first provide a priori estimates for the solution $u$ to \eqref{eq:gs} and then deduce its existence. The convergence analysis is carried out along a sequence $(\mc{D}_m)_m$ of GDs that satisfy the consistency, limit-conformity and compactness properties. For ease of notation, in this section we drop the index $m$ and denote by $\mc{D}$ a generic element of that sequence. Moreover, In the proofs, $C$ denote a generic constant which may change from one line to the next but has the same dependency as the constants appearing in the statement of the result to be proved.

\begin{lemma}[Existence and uniqueness for the GS]\label{e&u}
    With the assumptions in \ref{sec:Assum-Nota}, let $\mc{D}$ be a space-time GD as defined in \ref{def:GD}. Then there exists a solution to the Gradient scheme \ref{eq:gs} and, if $u_1$ and $u_2$ are the two solutions of this scheme, then $\zeta(u_1(\omega))=\zeta(u_2(\omega))$ in $\xdo$ and $\pd (u_1(\omega))=\pd (u_2(\omega))$ in $L^\infty(\Theta_T)$ for each $\omega\in\Omega$.
\end{lemma}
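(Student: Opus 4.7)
The approach is induction on $n$: at each step, given $\un(\omega)$ measurable with respect to $\mc{F}^n_N$, we seek $\uno(\omega)\in\xdo$ solving the per-step nonlinear equation of \eqref{eq:gs}. For fixed $\omega$ this is a finite-dimensional nonlinear problem in $\xdo$; I would derive an a priori bound, deduce existence from a Brouwer / topological degree argument, prove uniqueness of $\pd\uno$ and $\zeta(\uno)$, and finally invoke a measurable selection theorem to obtain an $\mc{F}^{n+1}_N$-adapted process.

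For the a priori bound, test the step-$n$ scheme with $\phi=\zeta(\uno)\in\xdo$. The piecewise constant property \eqref{eq:pcro} gives $\pd\phi=\zeta(\pd\uno)$, and the convex inequality $(a-b)\zeta(a)\ge \Xi(a)-\Xi(b)$ (valid since $\Xi'=\zeta$ is non-decreasing by A1) together with the ellipticity of $\Lambda$, assumption A5, and Young's inequality, yields
\[
\int_\Theta\Xi(\pd\uno)\,dx+\dt\|\gdbuo\|_\Lambda^2 \le C\bigl(\textstyle\int_\Theta\Xi(\pd\un)\,dx,\,\|\lw(\omega)\|_{\mc{K}}^2\bigr).
\]
Coercivity of $\Xi$ (from A1) bounds $\|\pd\uno\|_{L^2(\Theta)}$, and since $\pd$ is injective on $\xdo$ (Definition~\ref{def:GD}(ii)), this is an a priori bound on $\uno$ in the finite-dimensional space $\xdo$. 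Existence then follows from a Brouwer-type / topological degree argument applied to the continuous map $\xdo\to\xdo^\ast$ whose zeros coincide with the solutions of the scheme.

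For uniqueness, let $u_1,u_2\in\xdo$ both solve the step-$n$ equation for the same $\un$. Subtracting the two equations and testing with $\phi=\zeta(u_1)-\zeta(u_2)$ (so that $\pd\phi=\zeta(\pd u_1)-\zeta(\pd u_2)$ by \eqref{eq:pcro}) produces
\[
\int_\Theta (\pd u_1-\pd u_2)(\zeta(\pd u_1)-\zeta(\pd u_2))\,dx + \dt\|\dd(\zeta(u_1)-\zeta(u_2))\|_\Lambda^2 = 0.
\]
Both terms are non-negative: the first by pointwise monotonicity of $\zeta$, the second by ellipticity of $\Lambda$. They must both vanish. The pointwise vanishing of the first integrand, combined with $\zeta$ non-decreasing, forces $\zeta(\pd u_1)=\zeta(\pd u_2)$ a.e., i.e.\ $\pd\zeta(u_1)=\pd\zeta(u_2)$, and injectivity of $\pd$ upgrades this to $\zeta(u_1)=\zeta(u_2)$ in $\xdo$. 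Testing the difference equation with $\phi=u_1-u_2$ now eliminates the diffusion term and gives $\|\pd(u_1-u_2)\|_{L^2}^2=0$, i.e.\ $\pd u_1=\pd u_2$. An induction from $u_1^{(0)}=u_2^{(0)}=\mc{I}_\mc{D}u_0$ extends the conclusion to all time steps.

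The most delicate step, I expect, is existence. The natural coercive pairing is with $\zeta(\uno)$ rather than $\uno$, and because $\zeta$ is only non-decreasing (so $u\mapsto\zeta(u)$ is not injective on $\xdo$), the textbook Brouwer argument of the form ``$\langle F(u),u\rangle\ge 0$ on $\|u\|=R$ implies a zero in $B_R$'' is not directly available. The workaround is either a homotopy deforming $\zeta$ toward the identity while maintaining the a priori bound uniform in the homotopy parameter (combined with topological degree), or invoking a finite-dimensional surjectivity theorem for coercive (pseudo-monotone) continuous operators. Once existence is in hand, measurability of the selection $\omega\mapsto\uno(\omega)$ follows from Kuratowski--Ryll-Nardzewski, since the solution set is non-empty, closed in $\xdo$, and depends measurably on the data $(\un(\omega),\lw(\omega))$.
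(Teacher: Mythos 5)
Your overall strategy (per-time-step finite-dimensional existence via an a priori bound and topological degree, uniqueness of $\zeta(\uno)$ and $\pd\uno$ by monotonicity, then a measurable selection) is the standard one; the paper itself shortcuts the deterministic part by citing \cite[Corollary 6.9 and Lemma 6.10]{droniou2018gradient} (treating $f(\pd\zeta(\un(\omega)))\lw(\omega)$ as a fixed $L^\infty$ source for each $\omega$) and devotes its proof almost entirely to the adapted selection, which it performs \emph{explicitly} (infimum of the plateau of $\zeta$ on the degenerate components) rather than through an abstract selection theorem. Your Kuratowski--Ryll-Nardzewski route is workable provided you apply it with respect to $\mc{F}^{n+1}_N$ rather than the full $\sigma$-algebra, since adaptedness is part of Algorithm \ref{algo}.

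There is, however, one concrete error: $\pd$ is \emph{not} injective on $\xdo$ under Definition \ref{def:GD}(ii). The sets $\Theta_i$ are only required to be disjoint, and the paper explicitly works with the set $B_0$ of indices for which $\Theta_i$ has zero measure --- for $i\in B_0$ the component $u_i$ has no effect on $\pd u\in L^\infty(\Theta)$. This is precisely why the lemma asserts only $\pd u_1=\pd u_2$ and $\zeta(u_1)=\zeta(u_2)$, not $u_1=u_2$, and why the paper needs the plateau-infimum selection. You invoke injectivity of $\pd$ twice, and both uses fail in general. First, a bound on $\|\pd\uno\|_{L^2(\Theta)}$ does not bound $\uno$ in $\xdo$; to close the a priori estimate you should instead use that $v\mapsto\nrb\dd v\nre$ is a norm on $\xdo$ (Definition \ref{def:GD}(iii)), so the bound on $\dt\nrb\gdbuo\nrls$ controls $\zeta(\uno)$ in $\xdo$ (all norms being equivalent in finite dimension), and then the coercivity $|\zeta(s)|\ge c|s|-d$ from A1 controls every component of $\uno$, including those indexed by $B_0$. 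Second, in the uniqueness step, a.e.\ equality $\pd\zeta(u_1)=\pd\zeta(u_2)$ does not yield $\zeta(u_1)=\zeta(u_2)$ in $\xdo$; but your own identity already contains the fix: the vanishing of $\dt\nrb\dd(\zeta(u_1)-\zeta(u_2))\nrls$ together with Definition \ref{def:GD}(iii) gives $\zeta(u_1)=\zeta(u_2)$ in $\xdo$ directly, after which your test with $\phi=u_1-u_2$ correctly yields $\pd u_1=\pd u_2$. With these two repairs (and the homotopy/degree argument you already flag for existence, which is indeed how the cited reference handles the non-monotone pairing), the proof is sound.
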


\begin{proof}
     For each $\omega$, the set of equations \eqref{eq:gs} for $n=0,\cdots,N-1$ are equivalent to \cite[Eq. (6.10)]{droniou2018gradient} (with $\beta(u)=u$, $\Lambda=\mathrm{Id}$ and $f$ fixed at each time step as $f(\pd\zeta(u^{(n)}(\omega)))\Delta^{(n+1)}W(\omega)\in L^\infty(\Theta)$). Therefore, the existence of a solution $u(\omega)\in \xdo$ and the uniqueness of $\pd u(\omega)\in L^\infty(\Theta)$ and $\zeta(u(\omega))\in\xdo$ is guaranteed by \cite[Corollary 6.9 and Lemma 6.10, respectively]{droniou2018gradient}.
   The lack of complete uniqueness of $u$ means that we have to apply a specific process to ensure we select a (progressively) measurable solution. This can be done the following way. Recall the basis $(\mv[e]_i)_{i\in B}$ from Definition \ref{def:GD}, set $B_0:=\{i\in B\,:\,\text{$\Theta_i$ has zero measure}\}$, and take any solution $w(\omega)$ to \eqref{eq:gs}. We construct a measurable solution time step by time step (and by implicit induction). For $n=0,\ldots,N-1$, write $w^{n+1}(\omega)=\sum_{i\in B}w_i^{n+1}(\omega)\mv[e]_i$ and define then $u^{n+1}(\omega)=\sum_{i\in B}u^{n+1}_i(\omega)\mv[e]_i$ by setting
    \[
    u^{n+1}_i(\omega)=\left\{\begin{array}{ll}
    		w^{n+1}_i(\omega)&\text{ if $i\in B\backslash B_0$},\\
    		\text{inf}\{s:\zeta(s)=\zeta(w^{n+1}_i(\omega))\}&\text{ if $i\in B_0$}.
    		\end{array}\right.
    \]
    This choice ensures that $\pd u^{n+1}(\omega)=\pd w^{n+1}(\omega)$ and that $\zeta(u^{n+1}(\omega))=\zeta(w^{n+1}(\omega))$, and thus that $u^{n+1}(\omega)$ satisfies \eqref{eq:gs}.
	If $i\in B\backslash B_0$, then $\Theta_i$ has a non-zero measure and thus $u^{n+1}_i(\omega)=w^{n+1}_i(\omega)=(\pd w^{n+1}(\omega))_{|\Theta_i}$ is entirely determined by $\pd w^{n+1}$ which, by the uniqueness result of \cite[Lemma 6.10]{droniou2018gradient}, is uniquely determined by \eqref{eq:gs}; since all data in this equation is $\mc{F}^{n+1}_N$-measurable, then so is $\pd w^{n+1}$ and thus $u^{n+1}_i$.
	If $i\in B_0$, then  $u^{n+1}_i(\omega)$ is selected as the smallest value of the (potential) plateau of $\zeta$ at height $\zeta(w^{n+1}_i(\omega))$. Since $\zeta(w^{n+1}(\omega))$ is uniquely determined by \eqref{eq:gs} (see again \cite[Lemma 6.10]{droniou2018gradient}), we infer that $\zeta(w^{n+1}_i)$ is $\mc{F}^{n+1}_N$-measurable, and that the selection $u^{n+1}_i$ of its smallest value on the plateau is also $\mc{F}^{n+1}_N$-measurable.
    Hence, all components of $u^{n+1}$ are $\mc{F}^{n+1}_N$-measurable, which proves the measurability of $u$.%
\end{proof}
\begin{remark}
    Note that, in the examples in the Section \ref{sec:examplesGS}, $B_0$ is empty and thus, in these cases, the uniqueness of $\pd u$ implies the uniqueness of the solution $u$ itself.
\end{remark}
\begin{lemma}[A priori estimates]\label{eq:lemma1}
	There exists a constant $C>0$ such that
	\bea\label{lem1st1}
	\expc\mon\ith \Xi\left( \pd\un[n]\right)+\left\|\dd \zeta\left( u\right)\right\|^{2}_{L^{2}(\Theta_{T})}+\smk{N-1}\left\|\bpuo-\bpu\right\|^{2}_{L^{2}(\Theta)}\rb\le  C.
	\eea
\end{lemma}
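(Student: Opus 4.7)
My plan is to test the gradient scheme \eqref{eq:gs} with $\phi=\zeta(u^{(n+1)})\in\xdo$ (well defined since $\zeta(0)=0$). The piecewise-constant identity \eqref{eq:pcro} gives $\pd\phi=\zeta(\pd u^{(n+1)})$, so the diffusion term directly produces $\dt\nrb\dd\zeta(u^{(n+1)})\nrls$, and the discrete-time term becomes $\ip[\pd u^{(n+1)}-\pd u^{(n)},\zeta(\pd u^{(n+1)})]$. The crucial pointwise inequality is
\[
\zeta(b)(b-a)\ge \Xi(b)-\Xi(a)+\tfrac{L_\zeta}{2}(\zeta(b)-\zeta(a))^2\qquad\forall\, a,b\in\mb{R},
\]
obtained by rewriting the defect as $\int_a^b(\zeta(b)-\zeta(s))\,ds$ and exploiting Lipschitz monotonicity of $\zeta$ (one bounds $\zeta(b)-\zeta(s)\ge(\zeta(b)-\zeta(a))-L_\zeta^{-1}(s-a)$ on the subinterval where this is nonnegative, and integrates). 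Integrating over $\Theta$ and telescoping from $n=0$ to $k-1$ for an arbitrary $k\le N$ yields all three quantities in the left-hand side of \eqref{lem1st1} evaluated at index $k$, minus the initial term $\ith\Xi(\pd\mc{I}_{\mc{D}}u_0)$, which is uniformly bounded via $\Xi(s)\le\tfrac{1}{2L_\zeta}s^2$ and the consistency-based $L^2$-boundedness of $\pd\mc{I}_{\mc{D}}u_0$.

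\medskip

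The bulk of the proof concerns the stochastic right-hand side $\sum_{n=0}^{k-1}\ip[f(\zeta(\pd u^{(n)}))\lw,\zeta(\pd u^{(n+1)})]$. I would split $\zeta(\pd u^{(n+1)})=\zeta(\pd u^{(n)})+[\zeta(\pd u^{(n+1)})-\zeta(\pd u^{(n)})]$. On the increment piece, Cauchy--Schwarz combined with Young's inequality using parameter $\epsilon<L_\zeta/4$ produces $\epsilon\|\zeta(\pd u^{(n+1)})-\zeta(\pd u^{(n)})\|_{L^2}^2$ (absorbable into the LHS of the telescoped estimate) plus a remainder $\tfrac{1}{4\epsilon}\|f(\zeta(\pd u^{(n)}))\lw\|_{L^2}^2$; independence of $\lw$ from $\mc{F}^n_N$ together with A5 bounds the expectation of this remainder by $C\dt\,\mathbb{E}[\ith\Xi(\pd u^{(n)})+1]$. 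The other piece, $M_k:=\sum_{n=0}^{k-1}\ip[f(\zeta(\pd u^{(n)}))\lw,\zeta(\pd u^{(n)})]$, is a discrete $(\mc{F}_N^n)$-martingale thanks to adaptedness of $u^{(n)}$.

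\medskip

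After taking $\max_{k\le N}$ and expectation in the telescoped estimate, the discrete Burkholder--Davis--Gundy inequality bounds $\mathbb{E}[\max_k|M_k|]$ by $C\,\mathbb{E}\bigl[\bigl(\dt\sum_n\|f(\zeta(\pd u^{(n)}))\|_{\lkl}^2\,\|\zeta(\pd u^{(n)})\|_{L^2}^2\bigr)^{1/2}\bigr]$. Factoring out $\max_j\|\zeta(\pd u^{(j)})\|_{L^2}$, invoking the coercivity-type inequality $\Xi(s)\ge\tfrac{L_\zeta}{2}\zeta(s)^2$ (a direct consequence of Lipschitz of $\zeta$ and $\zeta(0)=0$), and applying Young again splits this contribution into a small multiple of $\mathbb{E}[\max_k\ith\Xi(\pd u^{(k)})]$ (absorbable into the LHS) plus a residual $C(1+\dt\sum_n\mathbb{E}\ith\Xi(\pd u^{(n)}))$, using A5 once more. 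Setting $E_k:=\mathbb{E}[\max_{0\le j\le k}\ith\Xi(\pd u^{(j)})]$, all the estimates combine into a discrete Gronwall inequality $E_k\le C+C\dt\sum_{j<k}E_j$, whence $E_N\le C\,e^{CT}$; plugged back into the telescoped estimate (and using boundedness of $\Lambda$ from A2 to pass between $\nrls$ and the $L^2(\Theta_T)$ norm of \eqref{lem1st1}), this closes the bound.

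\medskip

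The principal obstacle is the BDG--Young balancing: one must simultaneously absorb both $\epsilon\,\mathbb{E}[\max_k\ith\Xi(\pd u^{(k)})]$ and $\epsilon\sum_n\|\zeta(\pd u^{(n+1)})-\zeta(\pd u^{(n)})\|_{L^2}^2$ into the LHS of the telescoped estimate while keeping the Gronwall source linear in the $E_j$, and this is made possible precisely by the two coupling inequalities $\zeta(b)(b-a)\ge\Xi(b)-\Xi(a)+\tfrac{L_\zeta}{2}(\zeta(b)-\zeta(a))^2$ and $\Xi(s)\ge\tfrac{L_\zeta}{2}\zeta(s)^2$, which tie together the three a priori controls on the LHS of \eqref{lem1st1}.
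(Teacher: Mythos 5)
Your proposal is correct, and the deterministic part of your energy estimate takes a genuinely different route from the paper's. The paper obtains the three left-hand-side quantities by testing \eqref{eq:gs} \emph{twice} -- once with $\zeta(u^{(n+1)})$ (giving the $\Xi$-telescoping and the gradient term via $\Xi(b)-\Xi(a)\le(b-a)\zeta(b)$) and once with $\zeta(u^{(n+1)})-\zeta(u^{(n)})$ (giving the increment term via $(b-a)(\zeta(b)-\zeta(a))\ge L_\zeta|\zeta(b)-\zeta(a)|^2$ and the identity \eqref{eq:id}) -- and then adding the two to reach \eqref{eq:Add69}. Your single sharpened convexity inequality $\zeta(b)(b-a)\ge\Xi(b)-\Xi(a)+\tfrac{L_\zeta}{2}(\zeta(b)-\zeta(a))^2$ (which is indeed valid: the defect equals $\int_a^b(\zeta(b)-\zeta(s))\,ds$, and the positive part of the affine minorant $\zeta(b)-\zeta(a)-L_\zeta^{-1}(s-a)$ is supported inside $[a,b]$ and integrates to exactly $\tfrac{L_\zeta}{2}(\zeta(b)-\zeta(a))^2$) delivers all of this with the single test function $\zeta(u^{(n+1)})$. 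This buys two things: the stochastic increment term appears only once instead of twice, and -- more substantially -- the term $\tfrac{\dt}{2}\|\dd\zeta(u^{(0)})\|_\Lambda^2$ never arises, so for this lemma you do not need the extra boundedness requirement in Definition \ref{def:consistency}(b) that the paper introduces precisely for this purpose (Remark \ref{rem:consistency}). The trade-off is that the paper's two-test inequality \eqref{eq:Add69}, with its telescoping $\tfrac{\dt}{2}\|\dd\zeta(u^{(n)})\|_\Lambda^2$ terms, is reused verbatim in the second-moment estimate of Lemma \ref{eq:lemma2}, so the extra structure is not wasted there. Your treatment of the stochastic terms (splitting $\zeta(\pd u^{(n+1)})$ into increment plus $\zeta(\pd u^{(n)})$, Young absorption against the $L^2$-increment sum, independence of $\Delta^{(n+1)}W$, Burkholder--Davis--Gundy on the martingale part, and discrete Gronwall) matches the paper's Steps 1--2, up to the cosmetic difference that you factor $\max_j\|\zeta(\pd u^{(j)})\|_{L^2}$ out of the BDG bound and close with $\Xi(s)\ge\tfrac{L_\zeta}{2}\zeta(s)^2$, whereas the paper factors out $\max_n\|f(\zeta(\pd u^{(n)}))\|$ and closes with the discrete Poincar\'e inequality \eqref{eq:poincare}; you also run a single Gronwall on $\mathbb{E}[\max_{j\le k}\int_\Theta\Xi]$ rather than the paper's two-pass argument (first without, then with, the maximum inside the expectation). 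All of these variants are sound.
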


    \begin{proof}  

\textbf{Step 1}: \emph{bound on $\zeta(\pd u)$ and $\dd\zeta(u)$}.
	  
		For almost every $\omega\in\Omega$, we choose the test function $\phi=\pdbul[n+1](\omega)$ in \eqref{eq:gs} and write for this $\omega$
		\begin{equation}\label{eq:to2}
		\begin{aligned}
		\ips\ddh u(\omega),\pd \zeta(\un[n](\omega))\ipe&+\dt\nrb\dd \zeta(\un[n+1](\omega)\nrls=\ips f(\pd \zeta(\un[n](\omega)))\lw,\pd \zeta(\un[n+1](\omega))\ipe.
			\end{aligned}
		\end{equation} 
		In rest of the proof, we omit the notation $\omega$. Since $\zeta$ is increasing, $\Xi$ is convex and we have $\Xi(b)-\Xi(a)\le (b-a)\zeta(b)$ for all $a,b\in \mb R$. Substituting $a=\pdu$ and $b=\pduo$, we get
		\begin{equation}\label{eq:betaconvex}
		\ith \left( \zpo-\zp\right)dx\le \ith \ddh u \bpuo dx.%\zeta\left( \pduo\right)    
		\end{equation}
		Plugging \eqref{eq:betaconvex} into  \eqref{eq:to2} gives
		\begin{equation}\label{eq:firsttest2}
		\begin{aligned}
		&\izd+\dt \nrb\gdbuo\nrls\\&\le \ips\fpb\lw,\pdbul[n+1]-\pdbul[n]\ipe+\ips\fpb\lw,\pdbul[n]\ipe.
		\end{aligned}
		\end{equation}
		Using another test function $\phi=\buo-\bu$ in \eqref{eq:gs} and the following identity
	  \bea\label{eq:id}
	    \lbd \mv[b]\cdot (\mv[b]-\mv[a])=	\fr{1}{2}\lbd\mv[b]\cdot\mv[b]-\fr{1}{2}\lbd\mv[a]\cdot\mv[a]+\fr{1}{2}\lbd (\mv[b]-\mv[a])\cdot (\mv[b]-\mv[a])
	  \eea
	 for $\mv[a]=\gdbu$ and $\mv[b]=\gdbuo$, we get, from \eqref{eq:gs},
		\begin{equation}\label{eq:secondtest}
		\begin{aligned}
			&\ips\ddh u,\pdbul[n+1]-\pdbul[n]\ipe+\frac{\dt}{2}\left[ \nrb\gdbl[n+1]\nrls-\nrb\gdbl[n]\nrls\rb \\
			& +\frac{\dt}{2}\nrb\gdbuo-\gdbu\nrls\le \ips\fpb\lw,\pdbul[n+1]-\pdbul[n]\ipe.
		\end{aligned}
		\end{equation}
		Moreover, since $\zeta$ is Lipschitz and non decreasing, we have \bea\label{eq:blue}(b-a)(\zeta(b)-\zeta(a))\ge L_{\zeta} |\zeta(b)-\zeta(a)|^{2}\quad \text{for}\quad a,b\in\mb{R}.\eea Substituting $ a=\pdbul$ and $b=\pdbul[n+1]$, we get
		\beas\label{eq:betalip}
		L_{\zeta}\nrb \bpuo-\bpu\nres\le \int_{\Theta}\left(\pduo-\pdu \right)\left( \bpuo-\bpu\right)dx. 
		\eeas
		Plugging that into \eqref{eq:secondtest} and using \eqref{eq:pcro} , we get
		\bea\label{eq:secondtest2}
		 &L_{\zeta}\nrs[\bpuo -\bpu]+\frac{\dt}{2}\left[ \nrb\gdbl[n+1]\nrls-\nrb\gdbl[n]\nrls\rb \\& +\frac{\dt}{2}\nrb\gdbuo-\gdbu\nrls\le \ips\fpb\lw,\pdbul[n+1]-\pdbul[n]\ipe.
		\eea
Adding together \eqref{eq:firsttest2} and \eqref{eq:secondtest2} yields
	\bea\label{eq:Add69}
	&\izd+\dt \nrb\gdbuo\nrls+L_{\zeta}\nrs[\bpuo -\bpu]\\&+\frac{\dt}{2}\left[ \nrb\gdbl[n+1]\nrls-\nrb\gdbl[n]\nrls\rb+\frac{\dt}{2}\nrb\gdbuo-\gdbu\nrls\\&\le 2\ips\fpb\lw,\pdbul[n+1]-\pdbul[n]\ipe+\ips\fpb\lw,\pdbul[n]\ipe.
	\eea
Using a telescopic sum from $n=0$ to $n=k-1$ gives
		\bea\label{eq:t17}
		&\ith \Xi\left( \pd\un[k]\right)+\dt \smk{k-1}\nrb \gdbuo\nrls+L_{\zeta}\smk{k-1}\nrb\bpuo -\bpu\nres+\frac{\dt}{2}  \nrb\gdbl[k]\nrls\\&+\frac{\dt}{2}\smk{k-1}\nrb\gdbuo-\gdbu\nrls\le \ith \Xi\left( \pd\un[0]\right) +\frac{\dt}{2} \nrb\gdbuz\nrls\\&+2\smk{k-1}\ips\fpb\lw, \pdbul[n+1]-\pdbul[n]\ipe+\smk{k-1}\ips\fpb\lw, \pdbul[n]\ipe.
		\eea	
Applying the Cauchy--Schwarz inequality and the Young inequality $ab\le \frac{a}{L_{\zeta}}+L_{\zeta}\frac{b^{2}}{4}$ for the third term of the right-hand side, we obtain
\bea\label{eq:t18}
2\smk{k-1}&\ips\fpb\lw,\pdbul[n+1]-\pdbul[n]\ipe\\&\le 2\smk{k-1}\nr[\fpb\lw]\nrb\pdbul[n+1]-\pdbul[n]\nre\\& \le \frac{2}{L_{\zeta}}\smk{k-1}\nrs[\fpb\lw]+\frac{L_{\zeta}}{2}\smk{k-1}\nrb\pdbul[n+1]-\pdbul[n]\nres.
\eea
Substituting \eqref{eq:t18} into \eqref{eq:t17}, we get
\begin{multline}
  \label{eq:t13}
  \ith \Xi\left( \pd\un[k]\right)+\dt \smk{k-1}\nrb \gdbuo\nrls+\frac{L_{\zeta}}{2}\smk{k-1}\nrb\bpuo -\bpu\nres+\frac{\dt}{2}  \nrb\gdbl[k]\nrls\\
  +\frac{\dt}{2}\smk{k-1}\nrb\gdbuo-\gdbu\nrls\le \ith \Xi\left( \pd\un[0]\right) +\frac{\dt}{2} \nrb\gdbuz\nrls+\\
  \frac{2}{L_{\zeta}}\smk{k-1}\nrs[\fpb\lw]+\smk{k-1}\ips\fpb\lw, \pdbul[n]\ipe.
\end{multline}
We note that the last term of the right-hand side of \eqref{eq:t13} vanishes when taking its expectation since $\pdu$ is $\mathcal F_{\tn}$ measurable and thus independent with $\lw$, which has a zero expectation. Taking expectations, we get
\bea\label{eq:t19}
  &\expc\ith \Xi\left( \pd\un[k]\right)\rb+\dt \expc\smk{k-1}\nrb\gdbuo\nrls\rb+\frac{L_{\zeta}}{2}\expc\smk{k-1}\nrb\bpuo -\bpu\nres\rb\\
  & +\frac{\dt}{2} \expc\nrb\gdbl[k]\nrls\rb+\frac{\dt}{2}\expc\smk{k-1}\nrb\gdbuo-\gdbu\nrls\rb \\
  & \le \ith \Xi\left( \pd\un[0]\right) +\frac{\dt}{2} \nrb\gdbuz\nrls+\frac{2}{L_{\zeta}}\expc\smk{k-1}\nrs[\fpb\lw]\rb.
\eea
From \eqref{eq:a1} and using the independence of the increment of Wiener process, the last term of right-hand side becomes
\bea\label{eq:T144}
\expc\nrs[\fpb\lw]\rb&\le\expc\nrsf\rb\expc\nrsw\rb\\&
\le\dt\text{Tr}(\mathcal{Q})\left( C_{2}\expc\ith\zp\rb+C_{1}\right).
\eea
Together with \eqref{eq:t19}, this implies	
\beas\label{eq:t110}
\expc\ith \Xi\left( \pd\un[k]\right)\rb  \le \ith \Xi\left( \pd\un[0]\right) +\frac{\dt}{2} \nrb \gdbuz\nrls+\frac{2 \text{Tr}(\mathcal{Q})}{L_{\zeta}}\left(C_{2}\smk{k-1}\expc\dt\ith\zp\rb+C_{1}T\right)  .
\eeas
By applying the discrete version of the Gronwall lemma to the above inequality together with the Definition \ref{def:consistency}-(b), we obtain
\bea\label{eq:T112}
\mon\expc\ith\zp\rb\le C.
\eea
It follows from \eqref{eq:t19}-\eqref{eq:T112} that
\beas\label{eq:t113}
&\dt \expc\smk{N-1}\nrb\gdbuo\nrls\rb+\frac{L_{\zeta}}{2}\expc\smk{N-1}\nrs[\bpuo -\bpu]\rb+\frac{\dt}{2} \mon\expc\nrb\gdbu\nrls\rb \\
&+\frac{\dt}{2}\expc\smk{N-1}\nrb\gdbuo-\gdbu\nrls\rb\le  C.
\eeas
\medskip
\textbf{Step 2}: \emph{bound on $\Xi(\pd u)$}.

By taking the maximum of \eqref{eq:t13} over $1\le k\le N$ and applying the expectation, we get
\bea\label{eq:maxofsecondtest2}
\expc\mon[k]\ith \Xi\left( \pd\un[k]\right)\rb%+\umu\dt \smk{k-1}\nrs[\gdbuo]+L_{\zeta}\smk{k-1}\nrs[\bpuo -\bpu]\\&+\frac{\umu\dt}{2} \nrb\gdbl[k]\nres
\le{}& \ith \Xi\left( \pd\un[0]\right) +\frac{\dt}{2} \nrb\gdbuz\nrls\\
&+\frac{2}{L_{\zeta}}\expc\smk{N-1}\nrsf\nrsw\rb\\
&+\expc\mon[k]\smk{k-1}\ips\fpb\lw, \pdbul[n]\ipe\rb.
\eea
To bound the third term in the right-hand side, we treat the sum as the stochastic integral of piecewise constant integrand and use the Burkholder--Davis--Gundy inequality \cite[Theorem 2.4]{brzezniak1997stochastic} with constant $B$
\beas\label{eq:BDG}
&\expc\mon[k]\smk{k-1}\ips\fpb\lw, \pdbul[n]\ipe\rb \\
&\le B\expc\left( \sum_{n=1}^{N}\dt\nrsf\nrb\pdbul[n]\nres\right)^{1/2}\rb\\&
\le B \expc\mon\nrf \left(\sum_{n=1}^{N}\dt\nrb\pdbul[n]\nres\right)^{1/2}\rb\\&
\le \frac{1}{4C_2}\expc\mon\nrsf\rb+ B^2C_2 \sum_{n=1}^{N}\expc\dt\nrb\pdbul[n]\nres\rb\\&
\le \frac{1}{4} \expc\mon\ith\zp\rb+\frac{C_{1}}{4C_2}+B^{2}C_2\rho^2 \sum_{n=0}^{N-1}\expc\dt\nrs[\gdbuo]\rb
\eeas
where we have used the Young inequality in the third inequality, and the bound \eqref{eq:a1} together with the Poincar\'e inequality \eqref{eq:poincare} in the fourth inequality.
We use \eqref{eq:T144} to bound the last term of the right-hand side of \eqref{eq:maxofsecondtest2}:
\bea\label{eq:t116}
\expc\sum_{n=1}^{N}\nrsf\nrsw\rb 
\le \text{Tr}(\mathcal{Q})T C_{2}\mon\expc\ith\zp\rb+\text{Tr}(\mathcal{Q})TC_{1}.
\eea
By using \eqref{eq:T112}-\eqref{eq:t116}, we deduce that
 \beas\label{eq:T117}
 \expc\mon\ith\zp\rb\le C.
 \eeas
 which completes the proof of \textit{priori} estimates \eqref{lem1st1}.
 
 The existence of at least one solution $u$ to \eqref{eq:gs} in the Algorithm \ref{algo} follows from the assumption A1, estimate 
 \eqref{lem1st1} and the same arguments as in \cite[Corollary 6.9]{droniou2018gradient}.
 \end{proof}

\begin{remark}[About the definition of consistency]\label{rem:consistency}
The usual definition of consistency for space-time GDs does not require the boundedness of $\dtm\|\ddm\zeta(\mc{I}_{\dm}(\phi))\|_\Lambda$.
Here, this additional assumption is required because, in the stochastic setting, we need to introduce the second
test function $\phi=\buo-\bu$, which gives rise in the left-hand side of \eqref{eq:t17} to the term $\dtm\|\ddm\zeta(\mc{I}_{\dm}(\phi))\|_\Lambda$,
which needs to remain bounded.
\end{remark}

\begin{remark}
    If $W$ is a real-valued Wiener process, inequality~\eqref{eq:T144} can be obtained by using the following assumption on $f$
    $$\|f(\zeta(u)\|^2_{L^2(\Theta)}
        \le
        C_2\int_\Theta\Xi( u)+C_1.$$
\end{remark}

\begin{remark}\label{rm:forubound} By Assumption A1 we have $0\le\zeta'(s)\le L^{-1}_{\zeta}$ for all $s\in\mb{R}$, so
 \[
  \Xi(s)\ge L_{\zeta}\int_{0}^{s} \zeta(w) \zeta'(w)dw
 \]
 and thus
	\bea\label{eq:bound.zeta.Xi}\left|\zeta(s)\right|^{2}\le 2L^{-1}_{\zeta}\Xi (s).%|\zet\pdbul[n]\nres\le C_{L}\iz,
	\eea
	Moreover, since $\zeta$ is coercive by A1, there exists $K_1,K_2$ such that
	\bea\label{eq:rmk2} 
	  s^{2}\le K_{1} \Xi (s)+K_{2}\quad\forall s\in\mb{R}.
	\eea
\end{remark}

 To obtain suitable compactness properties on the sequence of approximate solutions, we will also need an improved version of \eqref{lem1st1} in which we estimate the higher moments of $\Xi(\pd u)$ and $\dd\zeta(u)$.
 
 \begin{lemma}\label{eq:lemma2}
 	Let $u$ be a solution to \eqref{eq:gs}. Then there exists a constant $C>0$ such that
 	\bea\label{lem1st2}
 	\expc\mon \left(\ith \Xi\left( \pd\un[n] \right)\right)^{2}+\left\|\dd \zeta\left( u\right)\right\|^{4}_{L^{2}(\Theta_{T})} \rb\le  C.
 	\eea
 	\end{lemma}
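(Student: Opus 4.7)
The plan is to revisit the derivation of \eqref{eq:t13} from the proof of Lemma \ref{eq:lemma1}, but rather than taking expectations directly, I first square both sides and then take the expectation of the supremum in $k$. Since every term on the left-hand side of \eqref{eq:t13} is non-negative, applying $(a+b+c)^2\le 3(a^2+b^2+c^2)$ to the right-hand side gives, for every $1\le k\le N$ and $\omega$-a.s.,
\beas
\max_{1\le j\le k}\left(\ith \Xi(\pd\un[j])\right)^{2}+\left(\dt\smk{k-1}\nrb\gdbuo\nrls\right)^{2} \le C + 3\,(\mc{R}_2^k)^{2}+3\max_{1\le j\le k}(\mc{R}_3^j)^{2},
\eeas
where $\mc{R}_2^k:=\tfrac{2}{L_\zeta}\smk{k-1}\nrs[\fpb\lw]$, $\mc{R}_3^k:=\smk{k-1}\ips\fpb\lw,\pdbul\ipe$, and the deterministic constant $C$ absorbs the initial-data contributions, which are controlled by Assumption A3 and Definition \ref{def:consistency}-(b).

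The bulk of the work is then to bound the two stochastic contributions after taking expectations. Since $\mc{R}_3$ is a discrete scalar $\mb{F}$-martingale, Doob's $L^{2}$ inequality and orthogonality of martingale increments, combined with the conditional variance bound $\expc\ips\fpb\lw,\pdbul\ipe^{2}\mid\mc{F}^n_N\rb\le \dt\,\nrb f(\pdbul)\nre[\lkl]^{2}\nrs[\pdbul]$, yield $\expc\max_{1\le j\le k}(\mc{R}_3^j)^{2}\rb\le C\smk{k-1}\dt\,\expc \nrb f(\pdbul)\nre[\lkl]^{2}\nrs[\pdbul]\rb$. For $\mc{R}_2^k$, I apply the discrete Cauchy--Schwarz inequality $(\sum_{n=0}^{k-1}a_n)^2\le k\sum a_n^2$ together with the conditional Gaussian fourth-moment bound $\expc\nrs[\fpb\lw]^{2}\mid\mc{F}^n_N\rb\le C\dt^{2}\nrb f(\pdbul)\nre[\lkl]^{4}$; crucially, $k\dt^{2}\le T\dt$, which yields $\expc(\mc{R}_2^k)^{2}\rb\le CT\smk{k-1}\dt\,\expc\nrb f(\pdbul)\nre[\lkl]^{4}\rb$. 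In both bounds, \eqref{eq:a1} of A5 together with $\nrs[\pdbul]\le 2L_\zeta^{-1}\ith\Xi(\pd\un)$ from \eqref{eq:bound.zeta.Xi} reduce $\nrb f(\pdbul)\nre[\lkl]^{2}\nrs[\pdbul]$ and $\nrb f(\pdbul)\nre[\lkl]^{4}$ to affine-plus-quadratic expressions in $\ith\Xi(\pd\un)$; the affine parts are handled by the first-moment estimate \eqref{lem1st1}, leaving a quadratic contribution.

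Setting $Y_k:=\expc\max_{1\le j\le k}(\ith\Xi(\pd\un[j]))^{2}\rb$ and noting that $\expc(\ith\Xi(\pd\un))^{2}\rb\le Y_n$, the above estimates combine into the discrete Gronwall-type inequality $Y_k\le C + C\smk{k-1}\dt\,Y_n$, valid for every $1\le k\le N$. The discrete Gronwall lemma then yields $Y_N\le C$. Taking $k=N$ in the squared inequality now also controls $(\dt\smk{N-1}\nrb\gdbuo\nrls)^{2}$, and the coercivity constant from A2 converts this into the fourth-moment bound on $\nrb\dd\zeta(u)\nre[L^2(\Theta_T)]$ appearing in~\eqref{lem1st2}.

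The main obstacle I anticipate is the delicate scaling in the estimate of $\expc(\mc{R}_2^k)^{2}\rb$: the Cauchy--Schwarz step produces a factor $k$ from the number of time steps, which on its own would be disastrous as $\dt\to 0$, but it is exactly compensated by the $\dt^{2}$ from the Gaussian fourth moment, giving $k\dt^{2}\le T\dt$ and thus an $n$-summable factor. A related difficulty is that the dominance bounds on $f$ and $\pdbul$ only yield quadratic-in-$\ith\Xi$ control of the stochastic terms; these cannot be absorbed into the left-hand side of the squared inequality, so they must instead be closed via the discrete Gronwall lemma applied to the truncated maximum $Y_k$.
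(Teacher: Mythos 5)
Your argument is correct, and it reaches \eqref{lem1st2} by a genuinely different route from the paper. The paper does not square the telescoped inequality \eqref{eq:t13}; instead it introduces the composite scalar $Z^{(n)}=\int_\Theta\Xi(\pd u^{(n)})+\frac{\dt}{2}\|\nabla_{\mc D}\zeta(u^{(n)})\|_\Lambda^2+\sum_{i\le n}\dt\|\nabla_{\mc D}\zeta(u^{(i)})\|_\Lambda^2$, multiplies the \emph{one-step} inequality \eqref{eq:Add69} by $Z^{(n+1)}$, and uses the identity \eqref{eq:id} in the form $b(b-a)=\frac12 b^2-\frac12 a^2+\frac12(b-a)^2$ to telescope the squares; the terms $(Z^{(n+1)}-Z^{(n)})^2$ produced by Young's inequality are absorbed into the left-hand side, the martingale terms weighted by the $\mc F^n_N$-measurable factor $Z^{(n)}$ vanish in expectation (or are handled by Burkholder--Davis--Gundy for the running maximum in Step 2), and a Gronwall argument closes the estimate. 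Your proof instead squares the already-integrated inequality, controls the martingale sum by Doob's $L^2$ maximal inequality plus orthogonality of increments, and controls the quadratic-variation sum by Cauchy--Schwarz together with the Gaussian fourth moment of $\|\Delta^{(n+1)}W\|_{\mc K}$ -- correctly identifying that the factor $k$ from Cauchy--Schwarz is compensated by $\dt^2$, since $k\dt^2\le T\dt$. Both routes reduce, via \eqref{eq:a1} and \eqref{eq:bound.zeta.Xi}, to a quadratic-in-$\int_\Theta\Xi$ right-hand side closed by the discrete Gronwall lemma and the first-moment bound \eqref{lem1st1}; and in both the gradient contribution is read off because it sits (in squared form) on the left of the resulting inequality at $k=N$. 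Your version is shorter and avoids the delicate splitting $Z^{(n+1)}=(Z^{(n+1)}-Z^{(n)})+Z^{(n)}$ and the associated absorption bookkeeping; the paper's version follows the template of the cited reference and packages all three a priori quantities into the single scalar $Z^{(n)}$, which keeps the Gronwall step uniform across them. One shared (and standard) technicality that neither argument makes explicit is the a priori finiteness of the second moments needed to run Gronwall, which would formally require a stopping-time or truncation argument; this is not specific to your approach.
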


  \begin{remark}
  We could also, following the technique in \cite{droniou2020design}, bound the moments of order $q$ for any $q\ge 1$, instead of $q=2$ as above, but these bounds will not be useful to us here.
  \end{remark}
  
\begin{proof}
Let 
  $$B_{k}:=\sum_{i=0}^{k}\dt \nrb \gdbl[i]\nrls$$
and 
  $$Z^{(n)}:= \iz+\frac{\dt}{2} \nrb\gdbl[n]\nrls+B_{n}.$$

\textbf{Step 1}: \emph{bound on $\max_n \expc ( \zn)^{2}\rb$}.
 
We use the above notations to rewrite \eqref{eq:Add69} as
\beas
&\zno-\zn+L_{\zeta}\nrs[\bpuo -\bpu]+\frac{\dt}{2}\nrb\gdbuo-\gdbu\nrls\\&\le 2\ips\fpb\lw,\pdbul[n+1]-\pdbul[n]\ipe+\ips\fpb\lw,\pdbul[n]\ipe.
\eeas
Multiplying the above equation with $\zno$, we have
 \bea\label{eq:t123}
  \zno{}&\left( \zno-\zn\right)+L_{\zeta}\zno\nrs[\bpuo -\bpu]
  \\
  \le{}& 2\zno\ips\fpb\lw,\pdbul[n+1]-\pdbul[n]\ipe\\&+\zno\ips\fpb\lw,\pdbul[n]\ipe=I_{1}+I_{2}.
 \eea
 
 We use the Cauchy--Schwarz and Young inequalities to estimate $I_{1}$:
\beas
 I_{1}  
  \le{}& 2\zno\nrb\fpb\nre[\lkl]\nrb\lw\nre \nrb\bpd\nre \\
  \le{}& \fr{2}{L_{\zeta}}\zno\nrb\fpb\nre[\lkl]^{2}\nrb\lw\nre^{2}\\
       & +\fr{L_{\zeta}}{2}\zno \nrb\bpd\nres \\
 	\le{}& \fr{2}{L_{\zeta}}\left( \zno-\zn\right) \nrb\fpb\nre[\lkl]^{2}\nrb\lw\nre^{2}\\
 	     & +\fr{2}{L_{\zeta}}\zn \nrb\fpb\nre[\lkl]^{2}\nrb\lw\nre^{2}\\&+\fr{L_{\zeta}}{2}\zno \nrb\bpd\nres \\
 	\le{}& \fr{1}{8}\left( \zno-\zn\right)^{2} +\fr{8}{L_{\zeta}^{2}}\nrb\fpb\nre[\lkl]^{4}\nrb\lw\nre^{4}\\
 	     & +\fr{2}{L_{\zeta}}\zn \nrb\fpb\nre[\lkl]^{2}\nrb\lw\nre^{2}\\&+\fr{L_{\zeta}}{2}\zno \nrb\bpd\nres.
 \eeas
 To estimate $I_2$, we proceed with
 \beas
 I_{2}={}& \zno\ips\fpb\lw,\pdbul[n]\ipe\\
      ={}&\left( \zno-\zn\right)\ips\fpb\lw,\pdbul[n]\ipe\\
        &   + \zn\ips\fpb\lw,\pdbul[n]\ipe\\
      \le{}&\fr{1}{16}\left(\zno-\zn\right)^{2}+4\nrb\fpb\nre[\lkl]^{2}\nrb\lw\nres \nrb\pdbul[n]\nres \\
          & +\zn\ips\fpb\lw,\pdbul[n]\ipe.
 \eeas
 We note that, by \eqref{eq:a1} and \eqref{eq:bound.zeta.Xi},
	\begin{equation}\label{eq:bound.f.z}
		\begin{aligned}
			\nrb\fpb\nre[\lkl]^2\le{}& C_2 \zn+C_1,\\
			\nrb\pdbul[n]\nres\le{}& 2L^{-1}_{\zeta}\zn.
		\end{aligned}
	\end{equation}
Using the estimates on $I_1$ and $I_2$ together with \eqref{eq:id},  \eqref{eq:t123} and \eqref{eq:bound.f.z}, we get
\bea\label{eq:t125}
 &\fr{1}{2}\left( \zno\right) ^{2}-\fr{1}{2}\left( \zn\right) ^{2}+\fr{5}{16}\left(\zno-\zn \right)^{2}
 +\fr{L_{\zeta}}{2}\zno\nrs[\bpuo -\bpu] \\
 &\le \fr{8}{L_{\zeta}^{2}}(C_2 \zn+C_1)^{2}\nrb\lw\nre^{4}
 +\fr{10}{L_{\zeta}}\zn (C_2 \zn+C_1)\nrb\lw\nre^{2}\\
 &+\zn\ips\fpb\lw,\pdbul[n]\ipe.
 \eea
 We note that the last term on the right-hand side of \eqref{eq:t125} vanishes when taking expectation while the first two terms are estimated as follows:
	\bea\label{eq:t132}
	& \fr{8}{L_{\zeta}^{2}}\expc(C_2 \zn+C_1)^{2}\nrb\lw\nre^{4}\rb
	+\fr{10}{L_{\zeta}}\expc\zn (C_2 \zn+C_1)\nrb\lw\nre^{2}\rb\\
	&\le
	\fr{8}{L_{\zeta}^{2}}\dt^{2} \left( \trq\right)^{2}\expc (C_2 \zn+C_1)^{2}\rb+\fr{10}{L_{\zeta}}\dt \trq\expc \zn (C_2 \zn+C_1)\rb.
	\eea
  Hence, summing \eqref{eq:t125} from $n=0$ to $n=k$ where $k=0,\cdots, N-1$, taking expectation and using \eqref{lem1st1}, the above estimates and the discrete Gronwall lemma, we obtain
    \bea\label{eq:t135}
    \mon \expc \left( \zn\right)^{2}\rb\le C. 
    \eea
    
    \textbf{Step 2}: \emph{conclusion of the proof}.

    Summing \eqref{eq:t125} from $n=0$ to $n=k$ where $k=0,\cdots, N-1$,
   
     \beas\label{eq:t136}
    \fr{1}{2}\left( \zn[k+1]\right) ^{2}\le{}& \fr{1}{2}\left( \zn[0]\right) ^{2}+\fr{8}{L_{\zeta}^{2}}\smk{k}\nrb\fpb\nre[\lkl]^{4}\nrb\lw\nre^{4}\\&+
    \fr{2}{L_{\zeta}}  \smk{k}\nrb\fpb\nre[\lkl]^{2}\nrb\lw\nre^{2}\zn \\
    &+4\smk{k}\nrb\fpb\nre[\lkl]^{2}\nrb\lw\nres \nrb\pdbul[n]\nres \\& + \smk{k}\ips\fpb\lw,\pdbul[n]\ipe\zn.
    \eeas
     and taking maximum over $k$ and then applying expectation, we get
       \bea\label{eq:t137}
    \expc\mon\left( \zn\right) ^{2}\rb\le{}& \left( \zn[0]\right) ^{2}+\fr{16}{L_{\zeta}^{2}}\expc\smk{N-1}\nrb\fpb\nre[\lkl]^{4}\nrb\lw\nre^{4}\rb
    \\&+
   \fr{4}{L_{\zeta}}  \expc\smk{N-1}\nrb\fpb\nre[\lkl]^{2}\nrb\lw\nre^{2}\zn\rb\\& +8\expc\smk{N-1}\nrb\fpb\nre[\lkl]^{2}\nrb\lw\nres \nrb\pdbul[n]\nres\rb  \\& +2\expc \mxk\smk{k}\ips\fpb\lw,\pdbul[n]\ipe\zn\rb.
   \eea
Using the Burkholder--Davis--Gundy inequality (with constant $B$) and \eqref{eq:bound.f.z} we have
   \bea\label{eq:t138}
   &2\expc \mxk\smk{k}\ips\fpb\lw,\pdbul[n]\ipe\zn\rb\\ & \le
   2B\expc  \left( \smn[N-1]\dt \nrb \fpb\nres[\lkl] \nrb\pdbul[n]\nres \left( \zn\right)^{2} \right)^{1/2} \rb\\
   &\le
   2B\expc \mxk[n] \zn  \left( \smn[N-1]\dt \left(C_{2}\zn+C_{1} \right)  \times 2L^{-1}_{\zeta}\zn\right)^{1/2} \rb\\
   &\le
   \fr{1}{2}\expc \mxk[n] \left( \zn\right)^{2}\rb+4L^{-1}_{\zeta}B^{2}\smn[N-1]\dt\expc C_2(\zn)^2+C_1\zn\rb
    \\& \le
   \fr{1}{2}\expc \mxk[n] \left( \zn\right)^{2}\rb+4L^{-1}_{\zeta}B^{2}T\left((C_2+C_1)\mxk[n]\expc (\zn)^2\rb+C_1\right),
   \eea
   where the conclusion follows from $\zn\le 1+(\zn)^2$.
   
   The estimates \eqref{eq:t132}--\eqref{eq:t135}, \eqref{eq:t137} and \eqref{eq:t138} yield
   \beas
   \expc \mon \left( \zn\right)^{2}\rb\le C,
  \eeas
  which gives us the required estimate \eqref{lem1st2}.
\end{proof}

 \begin{remark}\label{rm:ubound}
	From \eqref{eq:rmk2} and Lemma \ref{eq:lemma2}, we have the following estimate, for $r\le 4$:
	\bea\label{eq:ubound}
	\expc \nrb \pd u\nre[L^{\infty}(0,T,L^{2}(\Theta))]^{r}\rb \le C.
	\eea
\end{remark}

Obtaining strong compactness result on the sequence of approximate solutions will require to estimate the time translates of $\zeta\left(\pd u\right)$. The following lemma is the key element for this estimate.

\begin{lemma}\label{lem:2}
	Let $u$ be the solution of \eqref{eq:gs}. Then there exists a constant $C>0$ such that, for all $\ell\in \left\lbrace 1,\cdots ,N-1 \right\rbrace $
	$$\expc \dt \smk{N-\ell}\nrb\bpul[n+\ell]-\bpu\nres\rb\le \tn[\ell]C.$$
\end{lemma}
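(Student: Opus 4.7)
The plan is to combine the Lipschitz-monotonicity of $\zeta$ used in \eqref{eq:blue} with a telescoped version of the gradient scheme, so that $\nrb\bpul[n+\ell]-\bpu\nres$ is controlled by a weighted sum of the evolution terms between $\tn$ and $\tn[n+\ell]$. Applying pointwise $L_{\zeta}|\zeta(b)-\zeta(a)|^{2}\le (b-a)(\zeta(b)-\zeta(a))$ with $a=\pd\un$ and $b=\pd\un[n+\ell]$ and integrating over $\Theta$, one obtains
\[
L_{\zeta}\nrb\bpul[n+\ell]-\bpu\nres\le \ith (\pd\un[n+\ell]-\pd\un)(\bpul[n+\ell]-\bpu)\,dx.
\]
I would then test the scheme \eqref{eq:gs} with $\phi_{n}:=\zeta(\un[n+\ell])-\zeta(\un)\in\xdo$, so that by the piecewise-constant identity \eqref{eq:pcro}, $\pd\phi_{n}=\bpul[n+\ell]-\bpu$. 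Writing $\pd\un[n+\ell]-\pd\un=\sum_{k=n}^{n+\ell-1}(\pd\un[k+1]-\pd\un[k])$ and summing \eqref{eq:gs} for $k$ from $n$ to $n+\ell-1$, the right-hand side above rewrites as the sum of a diffusion contribution $-\dt\sum_{k=n}^{n+\ell-1}\ip[\Lambda\gdbl[k+1],\dd\phi_{n}]$ and a stochastic contribution $\sum_{k=n}^{n+\ell-1}\ip[\fpb[k]\lw[k+1],\pd\phi_{n}]$.

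Multiplying by $\dt$, summing over $n=0,\ldots,N-\ell$, and taking expectations, each of the two contributions has to be bounded by $C\tn[\ell]$. For the diffusion term, a Cauchy--Schwarz applied to the inner $k$-sum, combined with an exchange of the summation order (each index $k$ appears in at most $\ell$ values of $n$), reduces the bound to the a priori energy estimate $\expc\dt\sum_{k}\nrb\gdbl[k+1]\nrls\rb\le C$ from Lemma \ref{eq:lemma1}; the extra $\dt$ factor coming directly from the scheme is what ensures the total scaling is $\ell\dt=\tn[\ell]$ rather than merely $O(1)$. The $n=0$ contribution involving $\gdbuz$ is handled by the consistency property on the initial-data interpolant discussed in Remark \ref{rem:consistency}.

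For the stochastic term, Young's inequality applied to $\dt\ip[G_{n},\pd\phi_{n}]$ with $G_{n}:=\sum_{k=n}^{n+\ell-1}\fpb[k]\lw[k+1]$ produces a term $\tfrac{L_{\zeta}}{4}\dt\nrb\pd\phi_{n}\nres$ that can be absorbed into the left-hand side, plus a residual proportional to $\dt\nrb G_{n}\nres$. Since $\fpb[k]$ is $\mc{F}^{k}_{N}$-measurable while $\lw[k+1]$ is independent of $\mc{F}^{k}_{N}$ and centred, the summands of $G_{n}$ form a martingale-difference sequence, so the cross-expectations vanish and $\expc\nrb G_{n}\nres\rb=\sum_{k=n}^{n+\ell-1}\expc\nrb\fpb[k]\lw[k+1]\nres\rb$. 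Each term is then bounded by $C\dt$ exactly as in \eqref{eq:T144}, via assumption A5 together with Lemma \ref{eq:lemma1}, yielding $\expc\nrb G_{n}\nres\rb\le C\tn[\ell]$; summing over $n$ and using $\dt(N-\ell+1)\le T$ produces the required $C\tn[\ell]$ bound.

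The main obstacle will be the careful bookkeeping of powers of $\dt$ and $\ell$ in the diffusion term: the $\dt$ from the scheme, the $\dt$ from the outer sum over $n$, and the Cauchy--Schwarz factor $\sqrt{\ell}$ from the inner $k$-sum must be combined so that the net scaling is exactly $\ell\dt$ and not an $O(1)$ constant or an excess factor of $\ell$. The stochastic term is conceptually simpler, because the orthogonality of martingale differences in the expected $L^{2}$-norm directly produces a single factor $\ell\dt$ without any additional multiplicative loss.
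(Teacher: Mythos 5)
Your proposal is correct and follows essentially the same route as the paper: the same test function $\dt\bigl(\zeta(u^{(n+\ell)})-\zeta(u^{(n)})\bigr)$, the same telescoping of \eqref{eq:gs}, the inequality \eqref{eq:blue} on the left-hand side, and the same counting argument (each interval $[\tn[k],\tn[k+1])$ appearing at most $\ell$ times) to extract the factor $\tn[\ell]$ from the diffusion term. The only cosmetic difference is that you bound the stochastic term via discrete martingale-difference orthogonality and \eqref{eq:T144}, whereas the paper phrases the identical computation as the It\^o isometry applied to $\int_0^T\onl\fpu\,dW(t)$.
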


\begin{proof}
	For any $\phi \in \xdo$, writing \eqref{eq:gs} for a generic $i\in\{0,\ldots,\ell\}$ and summing over $i$ yields
 \bea\label{eq:t221}
	\sml\ips \pdu[n+i+1]-\pdu[n+i],\pd \phi\ipe ={}& -\dt \sml\ips \lpd[n+i+1] \gdbl[n+i+1],\dd \phi\ipe \\
	&+\sml\ips \fpb[n+i]\lw[n+i+1],\pd \phi\ipe.
	\eea
	Choosing $\phi=\dt\left( \bpul[n+\ell]-\bpul[n]\right) $ and taking the sum over $n$ from $1$ to $N-\ell$ gives
	\beas
	\dt\smn {}&\ips \pdu[n+\ell]-\pdu[n],\left( \bpul[n+\ell]-\bpul[n]\right) \ipe\\
	  ={}& -\dt^{2} \smn \sml\ips \lpd[n+i+1] \gdbl[n+i+1],\dd \left(  \bpul[n+\ell]-\bpul[n]\right) \ipe \\
	  &+\dt\smn\sml\ips \fpb[n+i]\lw[n+i+1],\left( \bpul[n+\ell]-\bpul[n]\right)\ipe.
	\eeas
	Applying \eqref{eq:blue} for $ a=\pdbul$ and $b=\pdbul[n+1]$, we have
	\bea\label{eq:t223}
	\dt L_{\zeta} \smn{}& \nrb \bpul[n+\ell]-\bpul\nres\\
	& \le -\dt^{2} \smn \sml\ips\lpd[n+i+1]\gdbl[n+i+1],\dd \left(  \bpul[n+\ell]-\bpul[n]\right)\ipe \\
	&+\dt\smn\sml\ips \fpb[n+i]\lw[n+i+1],\left( \bpul[n+\ell]-\bpul[n]\right)\ipe =: I_{1}+I_{2}.
	\eea
	We first estimate the expectation of $I_{1}$ by using Cauchy--Schwarz inequalities
    \begin{align}
    &\expc I_{1}\rb\le
    \omu\expc \dt \smn \nrb\dd \left(  \bpul[n+\ell]-\bpul[n]\right)\nre\sml\dt\nrb\gdbl[n+i+1]\nre\rb\nonumber\\
    &=
	\omu\expc \dt \smn \nrb\dd \left(  \bpul[n+\ell]-\bpul[n]\right)\nre\int_{\tn[n]}^{\tn[n+\ell]}\nrb\dd \zeta \left(u \right) \nre\rb \nonumber\\
	&\le
	\omu\left(\tn[\ell]\right)^{\frac12}\expc \dt \smn \nrb\dd \left(  \bpul[n+\ell]-\bpul[n]\right)\nre\left( \int_{\tn[n]}^{\tn[n+\ell]}\nrb\dd \zeta \left(u \right) \nres\right) ^{\frac{1}{2}}\rb\nonumber\\
	&\le
	\omu\left(\tn[\ell]\right)^{\frac12}\expc \left(\dt  \smn \nrb\dd \left(  \bpul[n+\ell]-\bpul[n]\right)\nres\right) ^{\frac12}\left(\smn \dt\int_{\tn[n]}^{\tn[n+\ell]}\nrb\dd \zeta \left(u \right) \nres\right) ^{\frac{1}{2}}\rb\nonumber\\
	&\le
	\omu\left(\tn[\ell]\right)^{\frac12}\expc \left(2\int_{\tn[\ell]}^{\tn[N]}\nrb\dd \zeta \left( \pd u\right)\nres+2\int_{\tn[0]}^{\tn[N-\ell]}\nrb\dd \zeta \left( \pd u\right)\nres \right) ^{\frac{1}{2}}\left(\ell\dt\int_{\tn[1]}^{\tn[N]}\nrb\dd \zeta \left( \pd u\right)\nres\right) ^{\frac{1}{2}}\rb	\nonumber\\&\le
		2\omu\tn[\ell]\expc \nrb\dd \zeta \left( \pd u\right)\nres[L^{2}\left(\Theta_{T}\right) ]\rb,
	\label{eq:t224}
	\end{align}
 where we have used, in the penultimate line, the fact that, in the term $\smn \int_{\tn[n]}^{\tn[n+\ell]}$, each integral over $[\tn[k],\tn[k+1])$ for $k=n,\ldots,N$, appears at most $\ell$ times.
	To estimate the expectation of $I_{2}$ we use the Young inequality and write
	\begin{align}\label{we.need.a.number.here}
	\expc I_{2} \rb = {}&\dt\expc\smn\ips \int_{0}^{T} \onl \fpu d W\left( t\right) ,\left( \bpul[n+\ell]-\bpul[n]\right)\ipe\rb\nonumber\\
	 \le{}&
	\frac{L_{\zeta}}{4}\dt\smn \expc\nrb \bpul[n+\ell]-\bpul[n]\nres\rb\nonumber\\
	&+\frac{\dt}{L_{\zeta}}\smn\expc \nrb  \int_{0}^{T} \onl \fpu d W(t)\nres \rb.
	\end{align}
	By using It\^{o} isometry, Lemma \ref{eq:lemma1} and \eqref{eq:a1}, we bound the last term of the right-hand side:
	\bea\label{eq:t226}
	\frac{\dt}{L_{\zeta}}\smn{}&\expc \nrb  \int_{0}^{T} \onl \fpu  d W\left( t\right)\nres \rb\\ 
	& \le
	\frac{\trq \dt}{L_{\zeta}}\smn\expc \int_{0}^{T} \onl \nrb \fpu\nres[\lkl]dt\rb \\
	& \le
	\frac{\trq \ell \dt}{L_{\zeta}}\expc \int_{\tn[1]}^{\tn[N]} \nrb \fpu\nres[\lkl]dt\rb \\&\le
	\frac{\trq \tn[\ell] C_{2}}{L_{\zeta}}\left( \expc \int_{0}^{T} \ith\Xi\left( \pd u\right)\rb+TC_{1}\right),
	\eea
	where we have used, in the second equality, the bound $\smn\onl\le \ell$ (which follows, as in \eqref{eq:t224}, from the fact that in this sum each interval $[\tn[k],\tn[k+1]]$ appears at most $\ell$ times.
	Combining \eqref{eq:t223}--\eqref{eq:t226} and Lemma \ref{eq:lemma1} concludes the proof.
\end{proof}

Lemma \ref{lem:2} and \cite[Lemma A.2]{droniou2020design} give the result below
\bea[eq:hb]
\expc\nrb \zeta\left( \pd u\right) \nres[H^{\beta}(0,T,L^{2}(\Theta))]\rb\le C_{\beta}, \qquad \text{for any} \quad \beta \in (0, 1/2).
\eea
In the following lemma, we estimate the dual norm of the time variation of the iterates $\{\pdu\}_{n=0}^{N}$.
The dual norm $\dn$ on $\pd(\xdo)\subset L^{2}(\Theta)$ is defined by the following: for all $v\in \pd (\xdo)$,
	\beas
	\dn[v]:=\sup\left\lbrace\int_{\Theta}v(x)\pd \phi(x) dx\;:\; \phi\in\mc{A}\right\rbrace, 
	\eeas
	where $\mc{A}:=\left\lbrace \phi \in \xdo, \nrb \pd\phi\nre+\nrb\dd \phi\nre\le 1\right\rbrace $. We note that, for all $w\in \xdo$,
	\bea\label{est:adjoint.norm}
	  \left|\int_\Theta \pd w\pd\psi\right|\le \dn[\pd w]\left(\nrb\pd\psi\nre +  \nrb\dd\psi\nre\right)\qquad\forall\psi\in\xdo.
	\eea
 
 \begin{lemma}\label{lam:31}
For all $\ell=1,\cdots,N-1$, there exists a constant $C>0$ such that
	\bea\label{eq:lem3.4}
	\expc \left|\pdu[n+\ell]-\pdu\dne[4]\rb\le C\; \left( \tn[\ell]\right) ^{2}.
	\eea
	As a consequence, for any $t,s\in[0,T]$
	\bea\label{lam:32}
		\expc \left|\pd u(t)-\pd u(s)\dne[4]\rb\le C\; \left( |t-s|+\dt\right) ^{2}.
	\eea
\end{lemma}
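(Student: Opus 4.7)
The plan is to test \eqref{eq:gs}, summed over $\ell$ consecutive time steps, against an arbitrary $\phi\in\mc{A}$ and to extract the dual norm $|\cdot|_{*,\mc{D}}$ via \eqref{est:adjoint.norm}. Writing \eqref{eq:gs} at the indices $n+i$ for $i=0,\ldots,\ell-1$ and summing yields
\beas
\ips \pdu[n+\ell]-\pdu,\pd\phi\ipe
={}& -\dt\sml\ips\lbd\gdbl[n+i+1],\dd\phi\ipe\\
&+\sml\ips\fpb[n+i]\lw[n+i+1],\pd\phi\ipe,
\eeas
in which the stochastic sum is recognised as the It\^o integral of a piecewise-constant integrand. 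Restricting to $\phi\in\mc{A}$, so that $\nrb\pd\phi\nre,\nrb\dd\phi\nre\le 1$, applying Cauchy--Schwarz and the upper bound $\ol{\mu}$ on $\Lambda$, and then taking the supremum over $\phi\in\mc{A}$, I obtain the pointwise bound
\beas
\left|\pdu[n+\ell]-\pdu\right|_{*,\mc{D}}
\le \ol{\mu}\,\dt\sml\nrb\gdbl[n+i+1]\nre
+ \nrb\int_{\tn}^{\tn[n+\ell]}f(\zeta(\pd u))\,dW\nre.
\eeas

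Raising to the fourth power and taking expectation splits the estimate into a deterministic and a stochastic contribution. For the former, Cauchy--Schwarz in the sum over $i$ gives
\[
\Big(\dt\sml\nrb\gdbl[n+i+1]\nre\Big)^{4}\le (\tn[\ell])^{2}\,\nrb\dd\zeta(u)\nre[L^{2}(\Theta_{T})]^{4},
\]
whose expectation is bounded by $C(\tn[\ell])^{2}$ thanks to Lemma \ref{eq:lemma2}. For the latter, the fourth-moment Burkholder--Davis--Gundy inequality for $L^{2}(\Theta)$-valued It\^o integrals yields
\[
\expc\nrb\int_{\tn}^{\tn[n+\ell]}f(\zeta(\pd u))\,dW\nre^{4}\rb
\le C\,\expc\Big(\int_{\tn}^{\tn[n+\ell]}\nrb f(\zeta(\pd u))\nres[\lkl]ds\Big)^{2}\rb,
\]
and combining \eqref{eq:a1} with the crude bound $\int_{\tn}^{\tn[n+\ell]}\nrb f(\zeta(\pd u))\nres[\lkl]ds\le \tn[\ell]\big(C_{2}\mon\ith\Xi(\pdu)+C_{1}\big)$ and the fourth-moment a priori estimate of Lemma \ref{eq:lemma2} controls this by $C(\tn[\ell])^{2}$ as well. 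Adding the two contributions proves \eqref{eq:lem3.4}.

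Estimate \eqref{lam:32} then follows by discretising the continuous times: for $s\in(\tn[n],\tn[n+1]]$ and $t\in(\tn[m],\tn[m+1]]$ with $m\ge n$, one has $\pd u(t)-\pd u(s)=\pdu[m+1]-\pdu[n+1]$; if $m=n$ the difference vanishes, and otherwise $\ell:=m-n\ge 1$ and \eqref{eq:lem3.4} applied at the shifted index $n+1$, together with the elementary inequality $\ell\dt\le |t-s|+\dt$, delivers the desired bound. The main technical hurdle is the fourth-moment BDG estimate for the Hilbert-space-valued It\^o integral coupled with the quadratic growth in \eqref{eq:a1}: it is essential here that Lemma \ref{eq:lemma2} furnishes a second-moment bound on $\mon\ith\Xi(\pdu)$, rather than merely the first-moment bound of Lemma \ref{eq:lemma1}.
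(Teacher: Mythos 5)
Your proposal is correct and follows essentially the same route as the paper: sum the scheme over $\ell$ steps, test against $\phi\in\mc{A}$ to extract the dual norm, split into the drift term (handled by Cauchy--Schwarz in time and the fourth-moment bound on $\dd\zeta(u)$ from Lemma \ref{eq:lemma2}) and the stochastic term (handled by the Burkholder--Davis--Gundy inequality, \eqref{eq:a1}, and the second-moment bound on $\max_n\ith\Xi(\pdu)$ from Lemma \ref{eq:lemma2}), then discretise $s,t$ to deduce \eqref{lam:32}. The only cosmetic difference is that you take the supremum over $\phi\in\mc{A}$ pointwise before raising to the fourth power, whereas the paper keeps it inside the expectation; you also spell out the final discretisation step that the paper delegates to a citation.
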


\begin{proof}
We apply \eqref{eq:t221} to a generic $\phi\in\mc{A}$ to get
	\bea\label{eq:t4I12}
	\expc\left|\pdu[n+\ell]-\pdu[n]\dne[4]\rb={}&\expc\left|\slm\ips \pdu[n+\ell]-\pdu[n], \pd \phi \ipe\right|^{4}\rb\\
	\le{}& 2^{3}(\dt)^{4} \expc\left( \slm\sml \ips\lbd \gdbl[n+i+1],\dd \phi \ipe\right) ^{4}\rb \\
	&+2^{3}\expc\left(\slm \sml\ips \fpb[n+i]\lw[n+i+1],\pd \phi\ipe\right) ^{4}\rb:=I_{1}+I_{2}.
	\eea
To estimate $I_{1}$, we use lemma \ref{eq:lemma2} and the H\"{o}lder inequality, % $\left( \int_{\Theta}fd\mu\right)^{p} \le (\mu(\Theta))^{p-1}\nrb f\nre[L^{p}(\Theta)]^{p}$, 
to obtain
    \beas\label{eq:t4I1}
	I_{1}& \le
	C\omu^{4}(\dt)^{4} \expc\left( \sml \nrb \gdbl[n+i+1]\nre \slm\nrb\dd \phi \nre\right)^{4}\rb \\& \le
	C\omu^{4}\expc \left( \int_{\tn[n]}^{\tn[n+\ell]} \nrb \gdbf\nre \right) ^{4}\rb \\& \le
	C\omu^{4}\left( \tn[\ell]\right)^{2} \expc \left(  \int_{\tn[n]}^{\tn[n+\ell]} \nrb \gdbf\nres\right) ^{2}\rb \\& \le
	C\omu^{4}\left( \tn[\ell]\right)^{2} \expc \nrb \gdbf\right\|^{4}_{L^{2}(\Theta_{T})}\rb \le C \left( \tn[\ell]\right)^{2}. 
    \eeas
We estimate $I_{2}$ using the Burkholder--Davis--Gundy inequality, \eqref{eq:a1} and \eqref{lem1st2} to write
\bea\label{eq:t4I2}
I_{2}&\le B\expc\left( \nrb \int_{0}^{T}\onl\fpu dW(t)\nre^{4} \slm\nrb\pd \phi\nre^{4}\right) \rb \\& \le
B\expc\left(  \int_{\tn}^{\tn[n+\ell]}\nrb\fpu\right\|^{2}_{\lkl} dt \right)^{2} \rb \\& \le
B \left( \tn[\ell] \right)^{2} \expc\int_{\tn}^{\tn[n+\ell]}\nrb\fpu\right\|^{4}_{\lkl} dt  \rb \\& \le
B \left( \tn[\ell]\right)^{2}  \expc\int_{\tn}^{\tn[n+\ell]} \left(C_{2}\left( \izf\right)^{2} +C_{1}\right) dt  \rb \\& \le
C \left(\tn[\ell] \right)^{2}
\eea
The estimate \eqref{eq:lem3.4} follows from \eqref{eq:t4I12}-\eqref{eq:t4I2}. The estimate \eqref{lam:32} follows by the same argument as in the proof of \cite[Lemma 3.5]{droniou2020design}.
\end{proof}

Let $\left\lbrace \psi_{i}, \minn[i] \right\rbrace\subseteq C^{\infty}_{c}(\Theta)\setminus\left\lbrace 0 \right\rbrace  $ be a countable dense subset in $L^{2}(\Theta)$ and we set
\bea
 \label{eq:phidef}\phi_{i}:=\psi_{i}/(\nrb \psi_{i}\nre+\nrb \nabla \psi_{i}\nre).
\eea
 We also define the interpolator $P_{\mc{D}}: H^{1}_{0}(\Theta)\cap L^{2}(\Theta)\rightarrow\xdo $ by
\bea\label{eq:pdm}
P_{\mc{D}} \phi := \text{argmin}_{w\in\xdo}\left( \nrb \pd w-\phi\nre +\nrb \dd w -\nabla \phi\nre \right). 
\eea
From \eqref{eq:phidef}, \eqref{eq:pdm} and using the definition of consistency of space-time GDs with $w=0$ together with $\nrb \phi_{i}\nre+\nrb \nabla \phi_{i}\nre=1$, we have
	\begin{align}
	  \nrb \pd \Pd \phi_{i}\nre+\nrb\dd \Pd \phi_{i} \nre\le{}& \nrb \pd \Pd \phi_{i}-\phi_i\nre+\nrb\dd \Pd \phi_{i} -\nabla\phi_i\nre 
	  +\nrb \phi_{i}\nre+\nrb\nabla \phi_{i} \nre \nonumber\\
	  ={}& S_{\mc{D}}(\phi_i)+\nrb \phi_{i}\nre+\nrb\nabla \phi_{i} \nre\le 2.
	\label{eq:pdmpe}
	\end{align}
 
 The following lemma gives a bound on the time variations of $\left( |\ip[\pd u,\pd \Pd \phi_{i}]|\right)_{\minn[i]}$.% \in L^{\infty}(0,T)^{\mb{N}}$.
\begin{lemma}\label{lem:hbt}
	For $\beta \in (0,\frac{1}{2})$, there exists a constant $C>0$ depending on $\beta$ such that
	\beas
	\expc \nrb \ip[\pd u,\pd \Pd \phi_{i}]\nre[\hbt]^{2}\rb\le C_{\beta}.
	\eeas

\end{lemma}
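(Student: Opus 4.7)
The plan is to mirror the derivation that produced \eqref{eq:hb} and reduce the lemma to an application of \cite[Lemma A.2]{droniou2020design}, the same tool that converted the fourth-moment dual-norm estimate into $H^\beta$-regularity for $\zeta(\pd u)$. Define $F_i(t):=\ip[\pd u(t),\pd \Pd \phi_i]$, which is piecewise constant in time on the same grid as $\pd u$.

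First I would bound the $L^2(0,T)$ part of the $H^\beta$-norm. By Cauchy--Schwarz and the uniform bound \eqref{eq:pdmpe},
$$|F_i(t)|\le \nrb\pd u(t)\nre\,\nrb\pd \Pd \phi_i\nre \le 2\nrb\pd u(t)\nre,$$
so Remark \ref{rm:ubound} (with $r=2$) yields $\expc \nrb F_i\nre[L^2(0,T)]^2\rb\le C$.

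Next I would produce the fourth-moment time-increment estimate. Writing $F_i(t)-F_i(s)=\ip[\pd u(t)-\pd u(s),\pd \Pd \phi_i]$, the dual norm inequality \eqref{est:adjoint.norm} combined with \eqref{eq:pdmpe} gives
$$|F_i(t)-F_i(s)|\le \dn[\pd u(t)-\pd u(s)]\bigl(\nrb\pd \Pd \phi_i\nre+\nrb\dd \Pd \phi_i\nre\bigr)\le 2\,\dn[\pd u(t)-\pd u(s)].$$
Raising to the fourth power and invoking \eqref{lam:32} from Lemma \ref{lam:31} yields
$$\expc|F_i(t)-F_i(s)|^4\rb \le C\,(|t-s|+\dt)^2\qquad\forall\,t,s\in[0,T].$$

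With the piecewise-constant-in-time function $F_i$ enjoying an $L^2(0,T)$ bound and a fourth-moment time-increment estimate of the above form, \cite[Lemma A.2]{droniou2020design} applies verbatim and delivers $\expc \nrb F_i\nre[\hbt]^2 \rb\le C_\beta$ for every $\beta\in(0,1/2)$, uniformly in $\dt$. The main subtlety, which is precisely what that cited lemma is designed to handle, is the additive $\dt$ in the increment estimate: a naive insertion into the Gagliardo seminorm would yield the non-integrable kernel $\dt/|t-s|^{1+2\beta}$, and the piecewise-constant character of $F_i$ (so that the near-diagonal contribution $|t-s|<\dt$ vanishes within a single time interval) is exactly what makes the $H^\beta$ bound uniform in $\dt$. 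All the constants above depend only on $f,T,\mc{Q},u_0,\Lambda,\zeta,\beta$, matching the statement.
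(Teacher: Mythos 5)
Your proposal is correct and follows essentially the same route as the paper: both rest on the duality estimate \eqref{est:adjoint.norm} combined with the uniform bound \eqref{eq:pdmpe} on $\Pd\phi_i$, the fourth-moment translate estimate of Lemma \ref{lam:31}, and a final appeal to \cite[Lemma A.2]{droniou2020design}. The only cosmetic difference is that the paper first reduces to a second-moment bound on the discrete time translates, $\expc \dt\smn|\ips\pdu[n+\ell]-\pdu,\pd\Pd\phi_i\ipe|^2\rb\le C\,\tn[\ell]$, via Cauchy--Schwarz before invoking that lemma, whereas you keep the pointwise fourth-moment increment form; the two are interchangeable by one application of Jensen/Cauchy--Schwarz.
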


\begin{proof}
	Using the Cauchy--Schwarz inequality, \eqref{est:adjoint.norm}, \eqref{eq:pdmpe} and Lemma \ref{lam:31}, we obtain the following estimate
	\beas
	%treating expectation, delta and sum as a single integral and then apply cs
	&\expc \dt \smn \left| \ips\pdu[n+\ell]-\pdu, \pd \Pd \phi_{i}\ipe\right|^{2}\rb
	\\& \le \expc \dt \smn \left| \ips\pdu[n+\ell]-\pdu, \pd \Pd \phi_{i}\ipe\right|^{4}\rb^{\frac{1}{2}}\left( \dt (N-\ell) \right)^{\fr{1}{2}}
	%\\& = \expc \dt \smn \left| \ips\pdu[n+\ell]-\pdu, \frac{\pd \Pd \phi_{i}}{\nrb\pd \Pd \phi_{i} \nre+\nrb\dd \Pd \phi_{i} \nre}\ipe\right|^{4}\rb\left(\nrb\pd \Pd \phi_{i} \nre+\nrb\dd \Pd \phi_{i} \nre \right)^{4}
	\\& \le  4\expc\dt \smn  \left|\pdu[n+\ell]-\pdu\dne[4]\rb^{\frac{1}{2}}T^{1/2}
	\\& \le  C\; \tn[\ell].
	\eeas
 Finally, reasoning as in the proof of \cite[Lemma A.2]{droniou2020design}, we have the required estimate.
\end{proof}

For any $t\in [0,T]$, we define
$$
  \mdt =\md:=\sum_{i=0}^{n}\fpb[i]\lw[i+1] \quad \text{for}\quad t\in [\tn, \tn[n+1]], \quad n\in \{0,\cdots,N-1\}.
$$
In the following lemma, we estimate $M_{\mc{D}}$. 

\begin{lemma}\label{t:4}
	For any $\beta\in (0,1/2)$ there exists  a constant $C>0$ depending on $\beta$ such that
	\bea[eq:l41]
	\expc \nrb M_{\mc{D}} \nre[\linf]^{4}\rb\le C \qquad \text{and} \qquad \expc \nrb M_{\mc{D}} \nres[\rm{W}^{\beta,4}(0,T,L^{2}(\Theta))]\rb\le C_{\beta}.
	\eea
\end{lemma}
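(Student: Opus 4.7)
Both bounds in \eqref{eq:l41} will rely on the Burkholder--Davis--Gundy (BDG) inequality applied to the stochastic-integral representation of $M_\mc{D}$. The key observation is that $M_\mc{D}$ is, up to a one-step shift at the endpoints, the It\^o integral of the piecewise-constant predictable integrand $h(\tau):=\fpb[i]$ on $[\tn[i],\tn[i+1])$: for $t\in[\tn,\tn[n+1]]$, $M_\mc{D}(t)=\int_0^{\tn[n+1]}h\,dW$. From \eqref{eq:a1} combined with \eqref{lem1st2}, one obtains the uniform bound $\sup_{\tau\in[0,T]}\mathbb{E}\nrb h(\tau)\nre[\lkl]^4\le C$, which is the key input we feed into BDG for both parts of the statement.

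For the first estimate, BDG at order $4$ followed by Cauchy--Schwarz in time gives
\[
  \expc\sup_{t\in[0,T]}\nrb M_\mc{D}(t)\nre^4\rb\le C\,\expc\Bigl(\int_0^T\nrb h(\tau)\nre[\lkl]^2\,d\tau\Bigr)^{\!2}\rb\le C\,T\,\expc\smk{N-1}\dt\,\nrb \fpb\nre[\lkl]^4\rb,
\]
and each summand is $O(1)$ via \eqref{eq:a1} (which yields $\nrb\fpb\nre[\lkl]^4\le C((\ith\Xi(\pdu))^2+1)$) and the higher-moment bound of Lemma~\ref{eq:lemma2}.

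For the second estimate, it is enough to bound the fourth moment $\expc\nrb M_\mc{D}\nre[\rm{W}^{\beta,4}(0,T;L^{2}(\Theta))]^4\rb$, which dominates $\expc\nrb M_\mc{D}\nres[\rm{W}^{\beta,4}(0,T;L^{2}(\Theta))]\rb$ by Cauchy--Schwarz. Splitting the Sobolev--Slobodeckij norm,
\[
  \nrb M_\mc{D}\nre[\rm{W}^{\beta,4}(0,T;L^{2}(\Theta))]^4=\nrb M_\mc{D}\nre[\ltlft]^4+\int_0^T\!\!\int_0^T\fr{\nrb M_\mc{D}(t)-M_\mc{D}(s)\nre^4}{|t-s|^{1+4\beta}}\,dt\,ds,
\]
the $L^4$ part is dominated by $T\nrb M_\mc{D}\nre[\linf]^4$ and is thus controlled by the first estimate. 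For the double integral, the increment $M_\mc{D}(t)-M_\mc{D}(s)$ is again an It\^o integral of $h$ over an interval of length at most $|t-s|+\dt$; BDG together with Cauchy--Schwarz and the uniform fourth-moment bound on $h$ yield $\mathbb{E}\nrb M_\mc{D}(t)-M_\mc{D}(s)\nre^4\le C(|t-s|+\dt)^2$.

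Converting this discrete-time increment estimate into a finite integral bound is the main subtlety, since $C(|t-s|+\dt)^2$ cannot be integrated directly against the non-integrable kernel $|t-s|^{-1-4\beta}$ across the diagonal. The rescue is that $M_\mc{D}$ is piecewise constant in time, so its increment vanishes on all pairs $(t,s)$ in the same sub-interval; the remaining contributions reduce to a discrete sum over pairs of distinct intervals which, for $\beta<1/2$, is uniformly bounded in $\dt$. This is precisely the mechanism packaged in \cite[Lemma~A.2]{droniou2020design} (already invoked in the proof of Lemma~\ref{lem:hbt}), from which the second inequality in \eqref{eq:l41} follows.
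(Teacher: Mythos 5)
Your proposal is correct and follows essentially the same route as the paper: the Burkholder--Davis--Gundy inequality combined with \eqref{eq:a1} and the higher-moment bound of Lemma \ref{eq:lemma2} for the $\linf$ estimate, then the increment bound $\expc \nrb\md[n+\ell]-\md[n]\nre^{4}\rb\le C (\tn[\ell])^{2}$ (which the paper obtains directly from \eqref{eq:t4I2}) fed into \cite[Lemma A.2]{droniou2020design} for the $\rm{W}^{\beta,4}$ estimate. Your explicit discussion of why the $\dt$-shift in the increment bound is harmless near the diagonal of the Slobodeckij seminorm is exactly the mechanism that the cited Lemma A.2 encapsulates, so no gap remains.
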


\begin{proof}
Using the Burkholder--Davis--Gundy inequality, we have
	\beas
	\expc\nrb M_{\mc{D}} \nre[\linf]^{4}\rb= \expc \mon\nrb \sum_{i=0}^{n} \fpb[i]\lw[i+1]\nre^{4}\rb\le \expc\left( \int_{0}^{T} \nrb \fpu\nres[\lkl]\right) ^{2}\rb.
	\eeas
	The first estimate is then follows from \eqref{eq:a1} and Lemma \ref{eq:lemma2}.
 
For the second estimate, it follows from \eqref{eq:t4I2}, that 
	\bea\label{eq:Mtimetrans}
	\expc \nrb\md[n+\ell]-\md[n]\nre^{4}\rb= \expc\nrb \int_{0}^{T} \onl \fpu dW(t)\nre^{4}\rb\le C (\tn[\ell])^{2}.
	\eea
	Together with the first estimate in \eqref{eq:l41} and reasoning as in \cite[Lemma A.2]{droniou2020design}, we obtain the second estimate in \eqref{eq:l41}.
\end{proof}	

\section{Compactness of the solution to the scheme}\label{sec:compactness}

From hereon, we re-introduce the index $m$ in the sequence of gradient discretisations $(\mc{D}_m)_m$, removed at the start of Section \ref{sec:aprior.estimates}, and prove the tightness of the law of the sequence
$$
(\mc{R}_{m})_{\minn}:=\big(\pdm u_{m}, \pdm\zeta\left(u_{m}\right), \ddm \zeta (u_{m}), \mdm, W,\left(\ips \pdm u_{m},\pdm\Pdm \phi_{i}\ipe \right)_{\minn[i]} \big)_{m\in\mb{N}}
$$
in the space $$\mc{E}:= \lilws\times L^{2}(0,T;L^{2})\times L^{2}(0,T;L^{2})^{d}_{\text{w}}\times L^{4}(0,T;L^{2}_{\text{w}})\times C([0,T];L^{2})\times L^{4}(0,T)^{\mb{N}}$$ where the spaces $\lilws$ and $L^{2}(0,T;L^{2}(\Theta))^{d}_{\text{w}}$ are the spaces endowed with the weak-$*$ and weak topologies, respectively. %
For the definition of the tightness, we refer the reader to the Appendix \ref{appen}.%

To prove the tightness of $(\pdm \zeta (u_{m}))_m$, we define the following norm on $\xdm:$ for any $w_{m}\in \xdm$ \bea[eq:dmnorm]\nrb w_{m}\nre[\beta,\mc{D}_{m}]:=\nrb\ddm w_{m}\nre[L^{2}(\Theta_{T})]+\nrb \pdm w_{m}\nre[\hb].\eea
By Lemma \ref{eq:lemma1} and Estimate \eqref{eq:hb}, we have 
\bea[eq:dmzeta]
  \expc \nrb \zeta (u_{m})\nres[\beta,\mc{D}_{m}]\rb\le C_{\beta}.
\eea
Recalling that $\{\phi_i\}_i$ defined by \eqref{eq:phidef} form a family of smooth functions whose span is dense in $L^2(\Theta)$, we define the metric
$$
  d_{L^2_{\text{w}}}(u,v)=\sum_{\minn[i]}\frac{\left|\ips u-v,\phi_i\ipe \right|}{2^{i}}\quad\text{for}\; u,v\in L^2(\Theta).
$$ 
On bounded sets in $L^2(\Theta)$ this metric defines the weak topology. To prove the tightness of $(\mdm)_m$, we introduce the space $L^{r}(0,T;L^{2}_{\text{w}}(\Theta))$ for $2\le r<4$, which is $L^{r}(0,T;L^{2}(\Theta))$ endowed with the metric 
$$
  d_{L^{r}\left( L^2_{\text{w}}\right) }(u,v):=\left(\int_{0}^{T}d_{L^2_{\text{w}}}(u(s),v(s))^{r} \right) ^{\fr{1}{r}}.
$$ 
For all $\phi\in L^{2}(\Theta)$, the map $L^{r}(0,T;\ltw) \ni v\mapsto\left\langle v(\cdot),\phi\right\rangle_{L^2(\Theta)}\in L^{r}(0,T) $ is continuous. Further, if $(v_{n})_{n\in\mb{N}}$ is bounded in $L^{\infty}(0,T;L^{2}(\Theta))$ then $v_{n}$ converges to $v$ in $\lrw$ if and only if for all $\phi \in L^{2}(\Theta)$:
\[
  \left\langle v_{n}(\cdot),\phi\right\rangle_{L^2(\Theta)}\rightarrow \left\langle v(\cdot),\phi\right\rangle_{L^2(\Theta)}\quad \text{in} \; L^{r}(0,T;\mb{R}).
\]
%Let $(\phi_{i})_{i\in \mb{N}}$ be a sequence in $L^{2}$ and equip the ball of radius $C_{B}$ in $L^{2}$ with the following metric:
%$$d_{L^{2}_{\text{w}}}(v,w)=\sum_{i\in\mb{N}}\frac{\min(1,|\langle v-w,\phi\rangle_{L^{2}}|)}{2^{i}}\quad \text{for} \; v,w \in B.$$
Note that on bounded sets in $L^{\infty}(0,T;L^{2}(\Theta))$, the topology of $L^{r}(0,T;\ltw)$ is identical to the one defined by the above metric.

Moreover, using similar arguments as in the proof of  \cite[Lemma A.3]{droniou2020design}, we have the following result.

\begin{lemma}\label{lem:hb}  Let $\beta\in (0,1)$. For any normed space $V$ and $v\in{H^{\beta}(0,T; V)}$ there exists $C$ depending only on $v$ and $\beta$ such that
	\beas
	  \|v(\cdot+s)-v\|_{L^{2}(0,T-s;V)}\le C|s|^{\beta}\|v\|_{H^{\beta}(0,T; V)}\quad\forall s\in[0,T].
	\eeas
\end{lemma}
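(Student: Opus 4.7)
The plan is to bound the translation difference using the Gagliardo seminorm description of $H^\beta(0,T;V)$,
\[
\|v\|_{H^\beta(0,T;V)}^2 = \|v\|_{L^2(0,T;V)}^2 + \int_0^T\int_0^T \frac{\|v(t)-v(\tau)\|_V^2}{|t-\tau|^{1+2\beta}}\,dt\,d\tau,
\]
together with a classical averaging trick. For any $t\in[0,T-s]$ and $r\in[s/2,s]$, the triangle inequality yields $\|v(t+s)-v(t)\|_V \le \|v(t+s)-v(t+r)\|_V + \|v(t+r)-v(t)\|_V$. Averaging in $r$ over $[s/2,s]$ and applying Cauchy--Schwarz then delivers the pointwise bound
\[
\|v(t+s)-v(t)\|_V^2 \le \frac{4}{s}\int_{s/2}^s \|v(t+s)-v(t+r)\|_V^2\,dr + \frac{4}{s}\int_{s/2}^s \|v(t+r)-v(t)\|_V^2\,dr.
\]

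The two integrals are handled symmetrically. In the second one, the distance $r$ between the arguments of $v$ lies in $[s/2,s]$, so $r^{1+2\beta}\le s^{1+2\beta}$ and therefore
\[
\frac{4}{s}\int_{s/2}^s \|v(t+r)-v(t)\|_V^2\,dr \le 4s^{2\beta}\int_{s/2}^s \frac{\|v(t+r)-v(t)\|_V^2}{r^{1+2\beta}}\,dr,
\]
which inserts the natural Gagliardo weight and produces the factor $s^{2\beta}$; the first integral is treated identically with $s-r\in[0,s/2]$ playing the role of the distance. After integrating in $t\in[0,T-s]$ and interchanging the order of integration, both resulting double integrals become---via an affine change of variables sending $(t,r)$ to $(u,\tau)$ with $\tau-u$ equal to the relevant distance---subregions of the full Gagliardo double integral. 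This gives
\[
\int_0^{T-s}\|v(t+s)-v(t)\|_V^2\,dt \le 4\,s^{2\beta}\,\|v\|_{H^\beta(0,T;V)}^2,
\]
which is the claimed estimate with $C=2$.

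The only slightly delicate point is the change of variables in the ``first'' double integral: writing $u=t+r$ and $\sigma=s-r$, one needs to verify that the image of $\{(t,r):t\in[0,T-s],\,r\in[s/2,s]\}$ sits inside $\{(u,\sigma):u\in[0,T],\,\sigma\in[0,s/2],\,u+\sigma\le T\}$, which is a routine endpoint check. I do not anticipate any other obstacle; since the argument uses only the triangle inequality and real-variable Cauchy--Schwarz, it carries over without modification from the scalar case to an arbitrary normed space $V$, which is presumably the reason the authors refer the reader to the analogous \cite[Lemma A.3]{droniou2020design}.
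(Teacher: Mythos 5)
Your proof is correct and complete: the averaging over $r\in[s/2,s]$, the Cauchy--Schwarz step, the insertion of the weights $r^{-1-2\beta}$ and $(s-r)^{-1-2\beta}$, and the affine changes of variables sending the two resulting double integrals into (disjoint) subregions of the Gagliardo double integral all check out, and in fact yield a universal constant rather than one depending on $v$ and $\beta$. The paper does not write out a proof of this lemma at all --- it defers entirely to \cite[Lemma A.3]{droniou2020design} --- and your argument is precisely the standard averaging/Slobodeckij-seminorm argument underlying that reference, so this is essentially the same approach with the omitted details supplied.
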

In the following lemma, we prove the tightness of the sequence in appropriate product space.
\begin{lemma}
	The laws of $\Rm$ in $\mc{E}$ are tight.
 \end{lemma}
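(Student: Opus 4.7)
The plan is to prove tightness of each marginal law separately: once each marginal family is tight, tightness of the joint law in the product space $\mc{E}$ follows by a standard diagonal/product argument (the finite-dimensional approximation of the countable product $L^{4}(0,T)^{\mb{N}}$ together with Tychonoff). For each marginal I will invoke Markov's inequality to reduce tightness to exhibiting, for each $\varepsilon>0$, a closed set $K_{\varepsilon}$ in the corresponding topological space whose complement has small probability.

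\textbf{Marginals handled by Banach--Alaoglu.} For $\pdm u_{m}$ in $\lilws$, Remark \ref{rm:ubound} gives $\mathbb{E}[\|\pdm u_{m}\|_{L^{\infty}L^{2}}^{r}]\le C$ with $r\le 4$; closed balls of $L^{\infty}(0,T;L^{2}(\Theta))$ are weak-$*$ compact and metrizable (since $L^{1}(0,T;L^{2})$ is separable), so Markov's inequality provides $K_{\varepsilon}$. The same argument, using Lemma \ref{eq:lemma2} and the reflexivity of $L^{2}(0,T;L^{2})^{d}$, yields tightness of $\ddm\zeta(u_{m})$ in $L^{2}(0,T;L^{2})^{d}_{\text{w}}$. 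The law of $W$ is fixed, hence tight.

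\textbf{Tightness of $\pdm\zeta(u_{m})$ in $L^{2}(0,T;L^{2}(\Theta))$ (strong).} This is the core difficulty. I will use the bound \eqref{eq:dmzeta} on $\mathbb{E}[\|\zeta(u_{m})\|_{\beta,\mc{D}_{m}}^{2}]$ and show that bounded sets in the norm $\|\cdot\|_{\beta,\mc{D}_{m}}$ are relatively compact in $L^{2}(0,T;L^{2}(\Theta))$ when projected through $\pdm$. The ingredients are (i) the GD compactness assumption (Definition \ref{def:compactness}), which controls spatial translates of $\pdm w$ uniformly in $m$ via $\|\ddm w\|_{L^{2}(\Theta)}$, and (ii) the $H^{\beta}(0,T;L^{2}(\Theta))$ control, which together with Lemma \ref{lem:hb} provides uniform control on time translates. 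A Kolmogorov--Riesz-type argument (as in the deterministic GDM setting, e.g.\ \cite[Chap. 4]{droniou2018gradient}) then yields relative compactness; Markov's inequality applied to \eqref{eq:dmzeta} gives the compact $K_{\varepsilon}$. This is the main technical step because it genuinely couples the compactness property of the GD with the stochastic time regularity estimates from Lemma \ref{lem:2}.

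\textbf{Tightness of $\mdm$ in $\lrw$ and of the countable family in $L^{4}(0,T)^{\mb{N}}$.} For $\mdm$, Lemma \ref{t:4} gives bounds on both $\mathbb{E}[\|\mdm\|_{L^{\infty}L^{2}}^{4}]$ and $\mathbb{E}[\|\mdm\|_{W^{\beta,4}(0,T;L^{2})}^{4}]$. On bounded sets of $L^{\infty}(0,T;L^{2})$, the topology of $\lrw$ is metrizable by $d_{L^{r}(L^{2}_{\text{w}})}$, and a $W^{\beta,4}$-bound in time combined with an $L^{\infty}L^{2}$-bound yields equicontinuity in $d_{L^{r}(L^{2}_{\text{w}})}$, hence relative compactness by an Arzel\`a--Ascoli-type argument (again following the template of \cite[Lem.\ A.3 and proof]{droniou2020design}). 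For the countable family $(\ip[\pdm u_{m},\pdm\Pdm\phi_{i}])_{i\in\mb{N}}$, Lemma \ref{lem:hbt} provides an $H^{\beta}(0,T)$-bound for each component, and the compact embedding $H^{\beta}(0,T)\hookrightarrow L^{4}(0,T)$ (valid for $\beta>1/4$, picked within $(0,1/2)$) gives tightness of each marginal in $L^{4}(0,T)$; tightness on the countable product $L^{4}(0,T)^{\mb{N}}$ then follows by choosing, for each $i$, a compact $K_{i,\varepsilon}\subset L^{4}(0,T)$ with the failure probability bounded by $\varepsilon/2^{i+1}$, and setting $K_{\varepsilon}:=\prod_{i}K_{i,\varepsilon}$, which is compact in the product topology. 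Finally, the product of compacts in the six factors of $\mc{E}$ gives the desired compact set for the joint law, proving tightness.
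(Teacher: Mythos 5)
Your proposal is correct and follows essentially the same route as the paper: the heart of the argument in both is the strong $L^{2}(0,T;L^{2}(\Theta))$ relative compactness of sets bounded in the norm $\nrb\cdot\nre[\beta,\mc{D}_{m}]$, obtained by coupling the GD compactness property (uniform control of space translates via $\ddm$) with the $H^{\beta}$-in-time bound and Lemma \ref{lem:hb}, followed by Markov/Chebyshev applied to \eqref{eq:dmzeta}; the remaining marginals are handled exactly as in the paper via Banach--Alaoglu for the weak(-$*$) factors and the compact embeddings $\rm{W}^{\beta,4}\cap L^{\infty}\hookrightarrow\lrw$ and $H^{\beta}\hookrightarrow L^{4}(0,T)$ together with the bounds of Remark \ref{rm:ubound} and Lemmas \ref{lem:hbt} and \ref{t:4}. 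The only (benign) deviation is your use of the direct Sobolev compact embedding $H^{\beta}(0,T)\hookrightarrow L^{4}(0,T)$ for $\beta>1/4$ where the paper invokes the compactness of $H^{\beta}(0,T)\cap L^{\infty}(0,T)$ in $L^{4}(0,T)$.
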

	\begin{proof}
		Consider, for a fixed constant $\lambda$, the sets
		\beas 
		  K_{m}(\lambda):=&\Bigl\{ \nu\in \pdm\xdo[_{m}]^{N_{m}+1}:\exists w\in \xdo[_{m}]^{N{m}+1} \;\text{satisfying}\; \pdm w=\nu,\;\nrb w\nre[\beta,\mc{D}_{m}]\le \lambda
		%\\& \text{and} \int_{0}^{T-\rho}\nrb \nu(t+\rho)-\nu(t)\nres dt \le \lambda \rho\quad \forall\rho \in (0,T)
		\Bigr\} 
		\eeas
  and denote
		$$
		  \mathfrak{K}(\lambda):=\bigcup_{m\in \mb{N}}K_{m}(\lambda).
		$$ 
		For each $m\in\mb{N}$, $K_{m}(\lambda)$ is bounded in the finite dimensional space $\pdm\xdo[_{m}]^{N+1}$ and therefore relatively compact in $L^{2}(0,T;L^{2})$. We set $X_{m}:=\pdm \xdo[_{m}]^{N+1}$ and define the norm 
		\begin{equation}\label{eq:def.norm.Xm}
		  \nrb \nu\nre[X_{m}]=\min \left\lbrace \nrb \ddm w\nre[L^2(\Theta)]: w\in \xdo[_{m}]^{N+1}\; \text{such that}\; \pdm w=\nu  \right\rbrace.
		\end{equation}
		By the compactness of $(\mc{D}_{m})_{m\in\mb{N}}$, $(X_{m})_{m\in\mb{N}}$ is compactly embedded in $L^{2}$ in the sense of \cite[Definition C.4]{droniou2018gradient}.
  
        Let $(\nu_{m})_{\minn}$ be a sequence such that $\nu_{m}\in K_{m}(\lambda)$ for all $m$ and, by definition of that space, take $w_m\in\xdo[_{m}]^{N{m}+1}$ such that $\pdm w_{m}=\nu_m$ and $\nrb w_{m}\nre[\beta,\mc{D}_{m}]\le \lambda$. By definitions \eqref{eq:dmnorm} and \eqref{eq:def.norm.Xm} of the norms $\nrb\cdot\nre[\beta,\mc{D}_m]$ and $\nrb\cdot\nre[X_m]$ we have
		\[
		\nrb \nu_{m}\nre[L^{1}(0,T;X_{m})]\le \sqrt{T}\nrb \nu_{m}\nre[L^2(0,T;X_{m})]\le\sqrt{T}\nrb\dd w_m\nre[L^2(\Theta_T)]\le \sqrt{T}\lambda.
		\]		
		Using Lemma \ref{lem:hb} and the fact that $\nrb \nu_m\nre[H^\beta(0,T;L^2(\Theta))]\le \nrb w_m\nre[\beta,\mc{D}_m]\le\lambda$, we have a bound on time translates of $\nu_m$ that is uniform in $m\in\mb{N}$. Therefore, using \cite[Proposition C.5]{droniou2018gradient}, any sequence $(\nu_{m})_{m\in\mb{N}}$
		such that $\nu_{m}\in\ K_{m}(\lambda)$ for each $m\in{\mb{N}}$ % and $\left( \nrb \ddm \nu_{m}\nre[L^{2}(\Theta)]\right)_{m\in \mb{N}}$ is bounded then 
		   is relatively compact in $L^{2}(0,T;L^{2})$. Combining this with the relative compactness of each $K_m$ and invoking \cite[Lemma A.4]{droniou2020design} proves that $\mathfrak{K}(\lambda)$ is relatively compact in $L^2(0,T;L^2(\Theta))$. Moreover, using the Chebyshev inequality and \eqref{eq:dmzeta} for any $m\in{\mb{N}}$, we have
        \begin{equation*}
		\mathcal{L}\left( \pdm[m] \zeta(u_{m})\right)\left( L^{2}(0,T;L^{2})\setminus \mathfrak{K}(\lambda)\right)  \le\mathbb P\{ \pdm[m]\zeta(u_{m})\notin K_{m}(\lambda)\}
            =
		\mathbb P\left( \nrb \zeta(u_{m})\nre[\beta,\mc{D}_{m}]> \lambda
		 \right) 
	%	\le
		% \frac{1}{\lambda^{2}}\mathbb E \left[\nrb \zeta(u_{m})\nres[\beta,\mc{D}_{m}]
		 %\right]
		 \le \frac{C_{f,T,\mc{Q},u_{0},\Lambda}}{\lambda}.	
		\end{equation*}
    As the right-hand side does not depend on $m$ and tends to $0$ as $\lambda\to\infty$, and since $\mathfrak{K}(\lambda)$ is compact, this concludes the proof that the laws of $(\pdm \zeta(u_{m}))_{m\in \mathbb N}$ are tight in $L^2(0,T;L^{2}(\Theta))$.
  
    In addition, from \cite[lemma A.3]{droniou2020design}, $\rm{W}^{\beta,4}(0,T,L^{2})\cap\linf\xhookrightarrow[]{c}\lrw$ and $\hbt\cap L^{\infty}(0,T)\xhookrightarrow[]{c}L^{4}(0,T)$. Therefore, using the bounds on $\left( \pdm u_{m}\right) _{m\in \mb{N}}$, $\left( \left(\ips \pdm u_{m},\pdm\Pdm \phi_{i}\ipe \right)_{\minn[i]}\right)_{\minn} $   and $\left( \mdm\right) _{m\in \mb{N}}$ stated in Remark \ref{rm:ubound}, Lemmas \ref{lem:hbt} and \ref{t:4} imply the tightness of the law of $\Rm$ in $\mc{E}$.
    \end{proof}

In the following lemma, we show the almost sure convergence of $\Rm$ up to a change of probability space using Skorokhod theorem.

\begin{lemma}
	There exists a probability space $(\ol{\Omega},\ol{\mc{F}},\ol{\mb{P}})$, a sequence of random variables $(\ol{u}_{m}, \ol{M}_{m}, \ol{W}_{m})_{m\in\mb{N}}$ and random variables $(\ol{u},\ol{M},\ol{W})$ on this space such that
	\begin{itemize}
		\item $\ol{u}_{m}\in \xdo[_{m}]^{N_m+1}$ for each $m\in\mb{N}$,
		\item The laws of $$(\pdm \ol{u}_{m}, \pdm\zeta\left(\ol{u}_{m}\right), \ddm \zeta (\ol{u}_{m}), \ol{M}_{m}, \ol{W}_{m},\left( \ips\pdm \ol{u}_{m},\pdm\Pdm \phi_{i}\ipe \right)_{\minn[i]})$$ and $\rmm$ coincide for each $m\in{\mb{N}}$,
		\item $(\ol{u}, \ol{M}, \ol{W})$ takes its values in $L^{2}(0,T;L^{2})\times \lrw\times C([0,T];L^{2})$,
		\item up to a subsequence as $m\rightarrow \infty$
		\beas \pdm \ol{u}_{m}\rightarrow \ol{u} \quad \text{a.s.\ in} \quad \lilws, \eeas
		\bea\label{eq:zetaas} \pdm \zeta(\ol{u}_{m})\rightarrow \zeta(\ol{u}) \quad \text{a.s.\ in} \quad \ltl, \eea
		\bea\label{eq:dzetas} \ddm \zeta(\ol{u}_{m})\rightarrow \nabla \zeta(\ol{u}) \quad \text{a.s.\ in} \quad \ltldw, \eea
		\bea\label{eq:rvMm}  \ol{M}_{m}\rightarrow \ol{M} \quad \text{a.s.\ in} \quad \lrw, \eea
		\bea[eq:rvWm]  \ol{W}_{m}\rightarrow \ol{W} \quad \text{a.s.\ in} \quad \ctl, \eea
		\bea\label{eq:tas}  \ips\pdm \ol{u}_{m}, \phi_{i}\ipe\rightarrow \ips\ol{u}, \phi_{i}\ipe
		\; \quad \text{a.s.\ in} \quad L^{4}(0,T)\;\text{for each}\; \minn[i], \eea
		\item $\ol{u}_m$ is a solution to the gradient scheme \eqref{eq:gs} with $\mc{D}=\mc{D}_{m}$ and $W$ is replaced by $\ol{W}_{m}$.
	\end{itemize}
Furthermore, up to a subsequence as $m\rightarrow \infty$, for almost all $t \in (0,T)$ and $r<4$
\bea\label{eq:ttas} \ips\pdm \ol{u}_{m}(t), \phi_{i}\ipe\rightarrow \ips\ol{u}(t), \phi_{i}\ipe\; \text{in}\; L^{r}(\ol{\Omega}) \;\text{for each}\; \minn[i],  \eea
\bea\label{eq:martingaletscon} \ol{M}_{m}(t) \rightarrow \ol{M}(t)\quad\; \text{in}\;\quad L^{r}(\ol{\Omega}; L^{2}_{\text{w}}). 
\eea
\end{lemma}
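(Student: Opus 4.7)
The plan is to produce the new probability space and a.s.\ convergences via a Skorokhod-type representation theorem applied to the tight family $\Rm$, then to extract each of the stated convergences from convergence in the product space $\mc{E}$, then to verify that $\ol{u}_m$ solves \eqref{eq:gs} with $W$ replaced by $\ol{W}_m$, and finally to upgrade \eqref{eq:tas} and \eqref{eq:rvMm} to the $L^r$ statements \eqref{eq:ttas}--\eqref{eq:martingaletscon}.

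First, I would invoke the Jakubowski--Skorokhod representation theorem, which extends the classical Skorokhod theorem to product spaces which need not be Polish but are sub-Souslin --- precisely the setting of $\mc{E}$, where several factors carry weak or weak-$*$ topologies. Combined with the tightness established in the previous lemma, this yields the probability space $(\ol{\Omega},\ol{\mc{F}},\ol{\mb{P}})$, the random variables $(\ol{u}_m, \ol{M}_m, \ol{W}_m)$ with matching joint laws, and the a.s.\ convergences in $\mc{E}$, up to a subsequence. That $\ol{u}_m$ can be chosen in $\xdo[_m]^{N_m+1}$ (and not merely its image under $\pdm$) uses the same measurable selection device as in Lemma \ref{e&u}, applied componentwise on the canonical basis $(\mv[e]_i)_{i\in B}$.

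Second, the convergences \eqref{eq:zetaas}--\eqref{eq:rvWm} and \eqref{eq:tas} follow immediately from the convergence in $\mc{E}$, once the limits are identified. For \eqref{eq:zetaas}, the strongly convergent sequence $\pdm\zeta(\ol{u}_m) = \zeta(\pdm\ol{u}_m)$ (by \eqref{eq:pcro}) admits a subsequence converging a.e.\ on $\Theta_T$; continuity of $\zeta$ then identifies the limit as $\zeta(\ol{u})$. For \eqref{eq:dzetas}, the weak limit $G$ of $\ddm\zeta(\ol{u}_m)$ in $L^2(\Theta_T)^d$ is identified with $\nabla\zeta(\ol{u})$ by passing to the limit, for arbitrary $\bs{\phi}\in H^{\text{div}}(\Theta)$, in the limit-conformity relation
\[
\left|\ip[\ddm \zeta(\ol{u}_m),\bs{\phi}] + \ip[\pdm\zeta(\ol{u}_m),\text{div}\bs{\phi}]\right| \le W_{\mc{D}_m}(\bs{\phi})\,\nrb\ddm\zeta(\ol{u}_m)\nre,
\]
which integrates via \eqref{eq:zetaas} to the distributional identity characterising $G = \nabla\zeta(\ol{u})$, while also ensuring that $\zeta(\ol{u})\in L^2(0,T;H^1_0(\Theta))$ a.s.

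Third, I would transfer the gradient scheme \eqref{eq:gs} to $\ol{\Omega}$. Since the identity in \eqref{eq:gs} is, for each fixed $\phi\in\xdo[_m]$ and each $n$, a Borel-measurable relation between $u_m$ and the increments $(\lw)_n$ of $W$, and since $(u_m,W)$ and $(\ol{u}_m,\ol{W}_m)$ share the same law by construction, the same identity holds $\ol{\mb{P}}$-a.s.\ for $(\ol{u}_m,\ol{W}_m)$; the adaptedness of $\ol{u}_m$ to the filtration generated by $\ol{W}_m$ is transferred in the same fashion. Consequently, $\ol{M}_m = \sum_i f(\pdbulm[i](\ol{u}_m))\lwm[i+1]$ coincides with the martingale built from $\ol{u}_m$ and $\ol{W}_m$. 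Finally, \eqref{eq:ttas}--\eqref{eq:martingaletscon} follow from Vitali's convergence theorem: the a.s.\ convergences \eqref{eq:tas} and \eqref{eq:rvMm}, combined with the higher-moment bounds of Lemmas \ref{eq:lemma2} and \ref{t:4} (which transfer by equality of laws to $\ol{u}_m$ and $\ol{M}_m$), yield uniform integrability of suitable powers, hence $L^r$ convergence for every $r<4$. The main obstacle is the third step: the careful transfer of the adaptedness and of the $\omega$-by-$\omega$ scheme identity to the new probability space, which is why the explicit measurable selection procedure from Lemma \ref{e&u} is essential --- without a genuinely measurable version of $u_m$ as a function of the Wiener increments, equality of laws alone would not suffice to conclude that $\ol{u}_m$ solves the scheme driven by $\ol{W}_m$.
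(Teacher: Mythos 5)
Your overall architecture matches the paper's: the Jakubowski--Skorokhod representation applied to the tight laws in $\mc{E}$, recovery of $\ol{u}_m\in\xdo[_m]^{N_m+1}$ and of the scheme identity from equality of laws, identification of the limit of the discrete gradients via limit-conformity, and Vitali's theorem for \eqref{eq:ttas}--\eqref{eq:martingaletscon}. However, your identification of the strong limit in \eqref{eq:zetaas} has a genuine gap. You argue that $\pdm\zeta(\ol{u}_m)=\zeta(\pdm\ol{u}_m)$ (by \eqref{eq:pcro}) converges a.e.\ along a subsequence and that ``continuity of $\zeta$ then identifies the limit as $\zeta(\ol{u})$''. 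This does not follow: a.e.\ convergence of $\zeta(\pdm\ol{u}_m)$ to some limit $\ol{z}$ says nothing about $\ol{u}$ unless $\pdm\ol{u}_m$ itself converges a.e.\ (or strongly) to $\ol{u}$ --- and all the Skorokhod step provides on $\pdm\ol{u}_m$ is weak-$*$ convergence in $\linf$. Nonlinear functions do not commute with weak limits; moreover $\zeta$ is only non-decreasing (in the Stefan setting it has a plateau), so you cannot invert it to upgrade convergence of $\zeta(\pdm\ol{u}_m)$ into convergence of $\pdm\ol{u}_m$. The paper closes this gap with the Minty monotonicity trick: the weak convergence $\pdm\ol{u}_m\rightharpoonup\ol{u}$ in $\ltl$, the strong convergence $\pdm\zeta(\ol{u}_m)\to\ol{z}$ in $\ltl$, and the monotonicity of $\zeta$ together force $\ol{z}=\zeta(\ol{u})$ a.e.\ in $\Theta_T$. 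Some such monotonicity (weak--strong pairing) argument is an essential ingredient that your proposal is missing; without it the limit $\ol{z}$ is simply unidentified.

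The remaining steps are sound and essentially those of the paper. Your discussion of transferring the $\omega$-by-$\omega$ scheme identity and the adaptedness to the new probability space is, if anything, more explicit than the paper's one-line assertion, and your use of limit-conformity to identify the weak limit of $\ddm\zeta(\ol{u}_m)$ with $\nabla\zeta(\ol{u})$ is exactly what the cited lemma of the GDM monograph encapsulates (granted, it relies on first having \eqref{eq:zetaas} correctly established). For \eqref{eq:ttas}--\eqref{eq:martingaletscon}, note that Vitali yields convergence in $L^r(\ol{\Omega}\times(0,T))$, from which one must still extract a further subsequence (diagonal over $i$) to obtain convergence in $L^r(\ol{\Omega})$ for almost every fixed $t$; this is a small but necessary additional step that your write-up glosses over.
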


\begin{proof}
	From the Jakubowski version of Skorokhod theorem %
 (see Theorem \ref{thm:J_Skorokhod})
 , we get a probability space $(\ol{\Omega},\ol{\mc{F}},\ol{\mb{F}},\ol{\mb{P}})$ with filtration $\ol{\mc{F}}_{t}:=\sigma \{\ol{W}(s),0\le s\le t\}$, a sequence of random variables 
  $$
	  (\ol{y}_{m},\ol{z}_{m},\ol{Z}_{m},\ol{M}_{m},\ol{W}_{m},\left( \ol{\mc{P}}_{m,i}\right) _{\minn[i]})
	$$
	on this space and taking values in the space $\mc{E}$ with the same laws, for each $m\in{\mb{N}}$, as $\mc{R}_m$ and random variables $(\ol{u},\ol{z},\ol{Z},\ol{M},\ol{W},\left( \ol{\mc{P}}_{i}\right)_{\minn[i]})$ in $\mc{E}$ such that there exists a set $A\subset \ol{\Omega}$ satisfying $\ol{\mb{P}}(A)=1$ and, up to a subsequence, for all $\omega \in A$ as $m\rightarrow \infty$,
	\bea[eq:rvum]
	\ol{y}_{m}(\omega)\rightarrow \ou(\omega)\quad \text{in} \quad \lilws,
	\eea
        \beas\label{eq:rvzm}
	\ol{z}_{m}(\omega)\rightarrow \ol{z}(\omega)\quad \text{in} \quad \ltl,
	\eeas
        \beas\label{eq:rvZM}
        \ol{Z}_{m}(\omega)\rightarrow \ol{Z}(\omega)\quad \text{in} \quad \ltldw,
        \eeas   
\bea\label{eq:rvpi}
 \ol{\mc{P}}_{m,i}(\omega)\rightarrow \mc{\ol{P}}_{i}(\omega) \quad \text{in} \quad L^{4}(0,T)\;\text{for each}\; \minn[i],
\eea
	and the convergence of \eqref{eq:rvMm} and \eqref{eq:rvWm} hold. Since the laws of $(\ol{y}_{m},\ol{z}_{m},\ol{Z}_{m},\left(  \ol{\mc{P}}_{m,i}\right)_{\minn[i]})$ and 
	$$
	\left( \pdm u_{m}, \pdm\zeta\left(u_{m}\right), \ddm \zeta (u_{m}),\left(\ips \pdm u_{m},\pdm\Pdm \phi_{i}\ipe \right)_{\minn[i]}\right) 
	$$ 
	are identical, there exists $\ou_{m}\in\xdo[_{m}]$ such that
	\bea[eq:limmatch] 
	  &\ol{y}_{m}(\omega)=\pdm \ou_{m}(\omega), \;\ol{z}_{m}(\omega)=\pdm \zeta(\ou_{m}(\omega)), \;\ol{Z}_{m}(\omega)=\ddm \zeta(\ou_{m}(\omega)), \\
	  &\ol{\mc{P}}_{m,i}(\omega)= \ips\pdm \ou_{m}(\omega),\pdm\Pdm \phi_{i}\ipe\; \text{for each}\; \minn[i], 
	\eea
	and $\ou_{m}$ satisfies \eqref{eq:gs} a.s., with $\mc{D}=\mc{D}_{m}$ and $W$ replaced by $\ol{W}_{m}$. Moreover, \cite[Lemma 4.8]{droniou2018gradient} gives	
        \beas
	\ol{Z}(\omega)=\nabla\ol{z}(\omega).
	\eeas
	Using \eqref{eq:rvum}-\eqref{eq:limmatch}, we have $\pdm \ou_{m}(\omega) \rightarrow \ou(\omega)$ weakly in $\ltl$ and $\pdm \zeta(\ou_{m}(\omega))\rightarrow \ol{z}(\omega)$ strongly in $\ltl$.  Since $\zeta$ is non decreasing, we use the Minty trick \cite[Lemma 8.2]{doi:10.1137/19M1260165} to deduce that $\zeta(\ou(\omega))=\ol{z}(\omega)$ a.e.\ in $\Theta_{T}$. Therefore, as a conclusion from the above discussion, we get 
	\beas 
	 \pdm \zeta(\ou_{m}(\omega))\rightarrow \zeta(\ou(\omega))\; \text{in}\; \ltl,\; \text{and}\; \ddm \zeta(\ou_{m}(\omega))\rightarrow \nabla\zeta(\ou(\omega))\; \text{\;in}\; \ltldw.
	 \eeas
	 % If ($f_{n})\m[n]$ is bounded in $L^{r}$ then it is equi-integrable in $L^{q}$ for all $q<r$
    To prove \eqref{eq:tas}, we first observe using \eqref{eq:rvpi}, weak-strong convergence and the definition of consistency of space-time GDs that 
	\bea\label{eq:umpdmphiC}
	  \ips\pdm \ou_{m},\pdm\Pdm \phi_{i}\ipe\rightarrow \ips\ol{u}, \phi_{i}\ipe \quad \text{a.s.\ in} \quad L^{4}(0,T)\;\text{for each}\; \minn[i].
	\eea
	By \eqref{eq:rvum} and \eqref{eq:limmatch}, $(\pdm \ou_{m})_m$ is a.s.\ bounded in $L^\infty(0,T;L^2(\Theta))_{\rm w^*}$ so the definition \eqref{eq:def.SD} of $S_{\mc{D}}$ gives 
	\bea\label{eq:umweakTimeDiffIneq}
	\nrb\ips\pdm \ou_{m}, \phi_{i}\ipe-\ips\pdm \ou_{m},\pdm\Pdm \phi_{i}\ipe\nre[L^{4}(0,T)]^{4}\le C S_{\mc{D}_m}(\phi_i)^4\quad\text{a.s.}
	\eea
	The left hand side converges to $0$ since $S_{\mc{D}_{m}}(\phi_{i})\rightarrow 0$ as $m\rightarrow \infty$, for each $\minn[i]$.
	From \eqref{eq:umweakTimeDiffIneq} and \eqref{eq:umpdmphiC}, we have the convergence \eqref{eq:tas}.
 
        Lastly, to prove \eqref{eq:ttas} and \eqref{eq:martingaletscon}, we observe the following estimate for each $\minn[i]$ using Remark \ref{rm:ubound} and the Cauchy--Schwarz inequality 
	\beas\label{eq:supumphi}
	\sup_{\minn}\expc \nrb\ips\pdm \ou_{m}, \phi_{i}\ipe\nre[L^{4}(0,T)]^{4}\rb\le \sup_{\minn}\expc \nrb\pdm \ou_{m}\nre[\ltlft]^{4}\rb \nrb\phi_i\nre[L^2(\Theta)]^4\le C.
	\eeas
	As a result of \eqref{lem1st2}, the coercivity of $(\mc{D}_{m})_{\minn}$ and \eqref{eq:l41}, we have
	\bea\label{eq:supM2}
	\sup_{\minn}\expc \left\|\pdm \zeta\left(  \ou_{m}\right)\right\|^{4}_{L^{2}(\Theta_{T})}+\left\|\ddm \zeta\left( \ou_{m}\right)\right\|^{4}_{L^{2}(\Theta_{T})}+  \nrb\ol{M}_{m}\nre[L^{\infty}(0,T;L^{2}(\Theta))]^{4}\rb\le C.
	\eea 
	Therefore, for each $\minn[i]$ and $r<4$ the sequence $\left( \ips\pdm \ou_{m}, \phi_{i}\ipe\right) _{\minn}$ is equi-integrable in $L^{r}(\ol{\Omega}\times(0,T))$ and the sequences $\left( \pdm \zeta\left(  \ou_{m}\right)\right)_{\minn} $ and $(\ol{M}_{m})_{\minn}$ are equi-integrable in $L^{r}(\ol{\Omega},L^{2}(\Theta_{T}))$ and $L^{r}(\ol{\Omega}\times(0,T);L^{2}(\Theta))$ respectively. Using \eqref{eq:zetaas}, \eqref{eq:rvMm}, \eqref{eq:tas}, we apply the Vitali theorem to get the following results
	\bea\label{eq:zetaumas}
	\pdm \zeta\left(  \ou_{m}\right) \rightarrow \zeta\left( \ol{u}\right) \;\text{in}\; L^{2}(\ol{\Omega}\times(0,T)\times \Theta)\; \text{as} \; m\rightarrow \infty,
	\eea
	\beas\label{eq:Mmas}
	\ol{M}_{m}\rightarrow \ol{M} \;\text{in}\; L^{r}(\ol{\Omega}\times(0,T);L^{2}_{\text{w}}(\Theta))\; \text{as} \; m\rightarrow \infty,
	\eeas
	\beas\label{eq:umas}
	\ips\pdm \ou_{m}, \phi_{i}\ipe \rightarrow \ips\ol{u}, \phi_{i} \ipe\;\text{in}\; L^{r}(\ol{\Omega}\times(0,T))\; \text{as} \; m\rightarrow \infty \;\text{for}\; \minn[i].
	\eeas
	Hence, there exist a subsequence, still denoted by $(\ol{M}_{m})_{\minn}$, such that $\ol{M}_{m}(t)$ converges to $\ol{M}(t)$ for almost all $t\in (0,T)$ in  $L^{r}(\ol{\Omega};L^{2}_{\text{w}})$, which proves \eqref{eq:martingaletscon}.
	
	Moreover, using the diagonal extraction process, we can find a single subsequence for all $\minn[i]$, still denoted by $\left( \ips\pdm \ou_{m}, \phi_{i}\ipe\right) _{\minn}$, such that $\left( \ips\pdm \ou_{m}(t), \phi_{i}\ipe\right) _{\minn}$ converges to $\ips\ol{u}(t), \phi_{i} \ipe$ for almost all $t\in (0,T)$ in $L^{r}(\ol{\Omega})$. This implies the convergence \eqref{eq:ttas}.
\end{proof}

The continuity of the stochastic processes $\ou$ and $\oM$ are shown in the following lemma.
\begin{lemma}\label{lemmaMUCont}
The stochastic processes $\ou$ has a continuous version in $C([0,T],L^{2}_{\text{w}}(\Theta))$ and $\ol{M}$ has a continuous version in $C([0,T],L^{2}(\Theta))$.
\end{lemma}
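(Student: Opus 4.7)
The plan is to obtain both continuity assertions from the Kolmogorov continuity theorem, applied after passing to the limit $m\to\infty$ in the fourth-moment bounds on time increments from Lemmas~\ref{lam:31} and \ref{t:4}.

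For $\oM$, I would first extend the discrete bound \eqref{eq:Mtimetrans}, using the piecewise constant in time construction of $\oM_m$, to arbitrary $t,s\in[0,T]$, obtaining $\expc\nrb \oM_m(t)-\oM_m(s)\nre^{4}\rb\le C(|t-s|+\dtm)^{2}$ uniformly in $m$ (this is legitimate because $\oM_m$ has the same law as $M_{\mc{D}_m}$). Extracting an almost surely convergent subsequence from the $L^r(\bar\Omega;L^{2}_{\text{w}}(\Theta))$ convergence \eqref{eq:martingaletscon}, the weak lower semicontinuity of $\nrb\cdot\nre$ together with Fatou's lemma transfers the bound to the limit: $\expc\nrb \oM(t)-\oM(s)\nre^{4}\rb\le C|t-s|^{2}$ for almost every $(t,s)$. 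The Kolmogorov continuity theorem then produces a modification of $\oM$ that is H\"older continuous with values in $L^{2}(\Theta)$.

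For $\ou$, the same strategy is executed in duality against the dense family $\{\phi_i\}_{\minn[i]}$. Estimate \eqref{eq:lem3.4} extended to continuous time, combined with \eqref{est:adjoint.norm} and the uniform bound \eqref{eq:pdmpe}, yields $\expc|\ips\pdm \ou_m(t)-\pdm \ou_m(s),\pdm\Pdm \phi_i\ipe|^{4}\rb\le C(|t-s|+\dtm)^{2}$ for each $i$. Writing $\ips\pdm \ou_m(t),\pdm\Pdm \phi_i\ipe=\ips\pdm \ou_m(t),\phi_i\ipe+\ips\pdm \ou_m(t),\pdm\Pdm \phi_i-\phi_i\ipe$, the first term converges in $L^r(\bar\Omega)$ to $\ips\ou(t),\phi_i\ipe$ by \eqref{eq:ttas}, while the second vanishes in $L^{4}(\bar\Omega)$ by consistency of $\Pdm$ together with the uniform bound \eqref{eq:supM2} on $\pdm\ou_m$ in $L^{4}(\bar\Omega;L^{2}(\Theta))$. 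Extracting an almost surely convergent subsequence and applying Fatou's lemma gives $\expc|\ips\ou(t)-\ou(s),\phi_i\ipe|^{4}\rb\le C|t-s|^{2}$ for almost every $(t,s)$. Kolmogorov's theorem applied to the scalar process $t\mapsto\ips\ou(t),\phi_i\ipe$ yields a continuous modification for each $i$. Since $\ou\in\linf$ almost surely (by \eqref{eq:rvum} and the weak-$*$ lower semicontinuity of that norm, together with Remark \ref{rm:ubound}) and $\{\phi_i\}$ is dense in $L^{2}(\Theta)$, a standard three-$\varepsilon$ argument promotes weak continuity against each $\phi_i$ to weak continuity against all $\phi\in L^{2}(\Theta)$, providing the desired modification of $\ou$ in $C([0,T];\ltw)$.

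The main obstacle is the bookkeeping of the countably many exceptional null sets, one per $\phi_i$ in the argument for $\ou$ and one indexing the $t$-sections where the convergence $\oM_m(t)\to\oM(t)$ in $L^r(\bar\Omega;\ltw)$ may fail. These must be amalgamated into a single null set, along a common subsequence of $(\oM_m,\ou_m)_m$, before Kolmogorov's theorem can be invoked on the $L^{4}(\bar\Omega)$-bound that holds for almost every pair $(t,s)$. Once this measurability and exceptional-set issue is handled, the density-based extension of the weak continuity of $\ou$ to arbitrary $\phi\in L^{2}(\Theta)$ is routine.
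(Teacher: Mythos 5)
Your proposal is correct in substance and shares the paper's skeleton (transfer the discrete fourth-moment increment bounds of Lemmas \ref{lam:31} and \ref{t:4} to the limit processes via Fatou's lemma, then invoke the Kolmogorov test), but it differs in two places where the paper's execution is worth noting. For $\ou$, rather than applying the scalar Kolmogorov theorem to each process $t\mapsto\langle\ou(t),\phi_i\rangle_{L^2(\Theta)}$ separately and then gluing countably many continuous versions together --- which is exactly the null-set amalgamation and compatibility issue you flag as your main obstacle --- the paper sums the per-$i$ increment estimates with weights $2^{-i}$ through the Jensen inequality to obtain a single bound $\expc \dlw(\ou(s'),\ou(s))^{r}\rb\le C|s'-s|^{r/2}$ (with $2<r<4$) for the metric $\dlw$ that generates the weak topology on bounded sets of $L^{2}(\Theta)$; one application of the Kolmogorov test to this metric-space-valued process then yields the $C([0,T];\ltw)$ version directly, with no gluing of per-component modifications required. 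For $\oM$, the paper first passes through the discontinuous Ascoli--Arzel\`a theorem to show that $\oM_m\to\oM$ uniformly in time in $L^{r}(\ol{\Omega};L^{2}(\Theta))_{\rm w}$ and hence that $\oM$ is continuous with values in that space; this upgrades the increment bound $\expc\nrb\oM(s')-\oM(s)\nre^{r}\rb\le C|s'-s|^{r/2}$ from almost-every pairs $(s,s')$ (which is all that your Fatou argument along the a.e.-in-$t$ convergence \eqref{eq:martingaletscon} delivers) to all pairs, so that the Kolmogorov criterion applies in its standard form. Your route can be completed, but you would need the additional step of checking that the continuous process built from a dense full-measure set of times is indeed a modification of $\oM$ at the exceptional times; the Ascoli--Arzel\`a detour is precisely what spares the paper that argument.
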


\begin{proof}
	We have, for $r<4$, the following inequality using \eqref{eq:ubound}, \eqref{lam:32} and \eqref{eq:pdmpe}, for $0\le s\le s'\le T$,
	\beas \label{eq:ss'}
	\expc \left|\ips \pdmu (s')-\pdmu (s),\phi_{i}\ipe\right|^{r}\rb & \le 2^{3} 	\expc \left|\ips \pdmu (s')-\pdmu (s),\Pdmphi \ipe\right|^{r}\rb\\&+ 2^{3} 	\expc \left|\ips \pdmu (s')-\pdmu (s),\Pdmphi-\phi_{i} \ipe\right|^{r}\rb
	\\ & \le C |s'-s|^{r/2}+C\dtm^{r/2}+C S_{\mc{D}_{m}}(\phi_{i})^r. 
	\eeas
	Using the Jensen inequality, one can write
		$$
		  \dlw(v,w)^{r}\le \sum_{\minn[i]} \frac{\left|\ips v-w,\phi_{i}\ipe \right|^{r} }{2^{i}}
		$$
	to infer
		$$
		\expc \dlw(\pdmu (s'),\pdmu (s))^{r}\rb\le C|s'-s|^{r/2}+C\sum_{\minn[i]} \frac{\dtm^{r/2}+S_{\mc{D}_{m}}(\phi_{i})^r}{2^{i}}.
		$$
	As $m\rightarrow \infty$, the last term tends to $0$ using a discrete version of dominated convergence theorem by observing that $\dtm\rightarrow 0$, $S_{\mc{D}_{m}}(\phi_{i})\rightarrow 0$, $2^{-i}(\dtm^{r/2}+S_{\mc{D}_{m}}(\phi_{i})^r\le 2^{-i}C$ for each $\minn[i]$, and $\sum_{\minn[i]}2^{-i}C=C< \infty$.
    Using \eqref{eq:ttas} and the Fatou lemma, we infer, for almost any $s,s'$
	$$
	\expc \dlw(\ol{u}(s'),\ol{u}(s))^{r}\rb\le C|s'-s|^{r/2},
	$$
	which implies the desired continuity of $\ol{u}$ using Kolmogorov test for $r<4$.
	
	For the continuity of $\ol{M}$, from \eqref{eq:Mtimetrans} and reasoning as in \cite[Lemma 3.5]{droniou2020design}, for $r<4$ we have
	\beas\label{eq:ascoliarzela}
	\expc \nrb \ol{M}_{m}(s')-\ol{M}_{m}(s)\nre^{r}\rb\le C (|s'-s|+\delta t_{\mc{D}_{m}})^{r/2},
	\eeas
	and from \eqref{eq:l41}, we have $\nrb \ol{M}_{m}\nre[L^{\infty}(0,T;L^{r}(\ol{\Omega};L^{2}(\Theta)))]\le C$. This implies that, for all $s\in [0,T]$, $\left\lbrace \ol{M}_{m}(s): \minn \right\rbrace $ is relatively compact in $L^{r}(\ol{\Omega};L^{2}(\Theta))_{\rm w}$. Therefore, using the discontinuous Ascoli--Arzela theorem \cite[Proposition C11]{droniou2018gradient}, we have
	 $$\ol{M}_{m}\rightarrow \ol{M}\; \text{uniformly on}\; [0,T]\; \text{in} \; L^{r}(\ol{\Omega};L^{2}(\Theta))_{\text{w}}\; \text{as}\; m\rightarrow \infty,$$ and $\ol{M}\in C([0,T], L^{r}(\ol{\Omega};L^{2}(\Theta))_{\text{w}})$. It follows form \eqref{eq:Mtimetrans} and Fatou's lemma that
	 $$\expc \nrb \oM (s')-\oM(s)\nre^{r}\rb\le \liminf_{m}\expc \nrb \ol{M}_{m}(s')-\ol{M}_{m}(s)\nre^{r}\rb\le C |s'-s|^{r/2}.$$
	 Applying the Kolmogorov test, we have the continuity of $\oM$.
	 \end{proof}

\section{Identification of the limit}\label{sec:identify_limit}
		 
		 In this section, we will first find the representation of the martingale $\ol{M}$ and then prove the main theorem. We know that $\ol{M}$ is continuous and square integrable from Lemma \ref{lemmaMUCont} and \eqref{eq:supM2} respectively. Following the same arguments as in the proof of \cite[Lemma 5.1]{droniou2020design} for $f(\zeta( \ol{u}))$ using \eqref{eq:zetaas}, \eqref{eq:martingaletscon} and \eqref{eq:zetaumas}, we have quadratic variation of $\ol{M}$ defined for all $a,b\in L^{2}(\Theta)$ by
		 \beas\label{eq:qvM}
		 \left\langle \left\langle \ol{M}(t)\right\rangle \right\rangle (a,b)=\int_{0}^{t} \left\langle \left(f(\zeta (\ol{u}))\mc{Q}^{1/2}\right)^{*}(a),\left(f(\zeta (\ol{u}))\mc{Q}^{1/2}\right)^{*}(b) \right\rangle_{\mc{K}}ds 
		 \eeas
		 for any $t\ge0$. Therefore, applying the continuous martingale representation theorem \cite[Theorem 8.2]{Prato2014stochastic}, there
        exists a probability space $(\hat{\Omega}, \hat{\mc{F}},\hat{\mb{P}})$, a filtration $\{\hat{\mc{F}_{t}}\}$ and a $\mc{Q}$-wiener process $\tilde{W}$ defined on $\left( \tilde{\Omega},\tilde{\mc{F}},\tilde{\mb{P}}\right) :=(\ol{\Omega}\times\hat{\Omega}, \ol{\mc{F}}\times\hat{\mc{F}},\ol{\mb{P}}\times\hat{\mb{P}})$ adapted to $\{\tilde{\mc{F}_{t}}:=\ol{\mc{F}_{t}}\times\hat{\mc{F}_{t}}\}$ such that $\ol{M}(\cdot,\ol{\omega})=\tilde{M}(\cdot, \ol{\omega},\tilde{\omega})$ and $\ol{u}(\cdot,\ol{\omega})=\tilde{u}(\cdot, \ol{\omega},\tilde{\omega})$ a.s.\ in $(\ol{\omega},\tilde{\omega})$ and we have for every $t\ge0$,
		 \bea\label{eq:Mrep}
		 \tilde{M}(t,\cdot)=\int_{0}^{t}f(\zeta(\tilde{u}(s,\cdot)))d \tilde{W}(s).
		 \eea
		 
		 We are now ready to prove our main theorem.

		 \begin{proof}[Proof of Theorem \ref{th:mt}] For any $t\in[0,T]$ and for each $\minn$, there exist $k\in \{0,\cdots ,N_{m}-1\}$ such that $t\in (\tn[k],\tn[k+1]]$. Recalling that $\ou_m$ solves the gradient scheme with $W$ replaced by $\ol{W}_m$, we have
		 	\[
		 	\ip[\ddhm \tilde{u}_{m}, \pdm\phi]+\dtm\ip[\lpdo\ddm \zeta(\tilde{u}^{(n+1)}_{m}),\ddm\phi]=\ip[f(\pdm \zeta(\tilde{u}^{(n+1)}_{m}))\lwm,\pdm\phi].
      \]
		 Summing this relation from $n=0$ to $n=k$ and choosing $\phi:=\Pdm \psi$, where $\psi\in H_{0}^{1}(\Theta)$, we obtain,
	 \bea\label{eq:lastgs}
	 \ip[\pdm \tilde{u}_{m}(t), \Pdm\pdm\psi]- \ip[\pdm\uo, \Pdm\pdm\psi]&+\sum_{n=0}^{k}\dtm\ip[\lpdo\ddm \zeta(\tilde{u}^{(n+1)}_{m}),\ddm\Pdm\psi]\\&=\ip[\tilde{M}_{m}(t),\pdm\Pdm\psi].
	 \eea
	 Using \eqref{eq:ttas}, \eqref{eq:martingaletscon}, $\Pdm \psi \rightarrow\psi$ in $L^{2}(\Theta)$ and the consistency of $(\mc{D}_{m})_{\minn}$, we obtain for almost every $t$,
	 %First is using the weak strong convergence add and subtract $\pdm \tilde{u}_{m}(t)\psi$ to see the convergence
	 \bea\label{eq:uc}
	 \ip[\pdm \tilde{u}_{m}(t), \Pdm\pdm\psi]\rightarrow \ip[ \tilde{u}(t), \psi]\quad \text{in}\quad L^{2}(\tilde{\Omega})\\
	 \ip[\pdm\uo, \Pdm\pdm\psi]\rightarrow \ip[u_{0}, \psi]\quad \text{in}\quad L^{2}(\tilde{\Omega})\\
	 \ip[\tilde{M}_{m}(t),\pdm\Pdm\psi]\rightarrow \ip[\tilde{M}(t),\psi] \quad \text{in}\quad L^{2}(\tilde{\Omega}).
	 \eea
	 To prove the convergence of the last term of left-hand side of \eqref{eq:lastgs}, we first observe that
	 \bea\label{eq:sumlastlhs}
	 \sum_{n=0}^{k}\dtm\ip[\lpdo\ddm \zeta(\tilde{u}^{(n+1)}_{m}),\ddm\Pdm\psi]&=\int_{0}^{t}\ip[\lpdo\ddm\zeta(\tilde{u}_{m}(s)),\ddm\Pdm\psi]ds\\& +\int_{t}^{\lceil t/\dtm\rceil \dtm}\ip[\lpdo\ddm\zeta(\tilde{u}_{m}(s)),\ddm\Pdm\psi]ds.
	 \eea
	Using the convergence \eqref{eq:dzetas} and $\ddm \Pdm \psi \rightarrow\nabla \psi$ in $L^{2}(\Theta)$, we have the convergence of the first term of the right-hand side of \eqref{eq:sumlastlhs}, for any $t\in [0,T]$
	\bea\label{eq:gdzetc}
	\int_{0}^{t}\ip[\lpdo\ddm\zeta(\tilde{u}_{m}(s)),\ddm\Pdm\psi]ds\rightarrow \int_{0}^{t}\ip[\lpdo\nabla\zeta(\tilde{u}(s)),\nabla\psi]ds.
	\eea
   Lastly, we note from the following inequality that the expectation of the absolute value last term in the right-hand side of \eqref{eq:sumlastlhs} tends to zero as $m\rightarrow\infty$. 
   \beas
   &\expc \left|\int_{t}^{\lceil t/\dtm\rceil \dtm}\ip[\lpdo\ddm\zeta(\tilde{u}_{m}(s)),\ddm\Pdm\psi]ds\right|\rb\\& \le
   \expc \int_{t}^{\lceil t/\dtm\rceil \dtm}\nrb\lpdo\ddm\zeta(\tilde{u}_{m}(s))\nre\nrb\ddm\Pdm\psi\nre ds\rb\\& \le
   \omu C\dtm^{1/2}\expc \left( \int_{0}^{T}\nrb\ddm\zeta(\tilde{u}_{m}(s))\nres ds\right)^{1/2}\rb\\& \le
   \omu C\dtm^{1/2},
   \eeas
   where the conclusion comes from \eqref{lem1st1}.
  Using \eqref{eq:uc}-\eqref{eq:gdzetc} and \eqref{eq:Mrep}, we pass to the limit in \eqref{eq:lastgs} to observe that $\tilde{u}$ satisfies (4) in Definition \ref{def:maindefinition}. This shows that $(\tilde{\Omega},\tilde{\mc{F}},\tilde{\mb{F}},\tilde{\mb{P}},\tilde{u}(\cdot),\tilde{W}(\cdot))$ is a weak martingale solution.
  \end{proof}

  	\begin{remark}[Strong convergence of the gradient]
  	In the deterministic case, we can prove a uniform-in-time strong-$L^2$ convergence of $\pdm \tilde{u}_{m}$ and a strong-$L^2$ convergence of $\ddm\zeta(\tilde{u}_{m})$, see \cite[Theorems 2.12 and 2.16]{droniou2016uniform}. These convergences are based on an energy equality for the continuous solution, and start from taking the superior limit of the deterministic version of \eqref{eq:to2}. There are however several challenges to applying this approach here. First, we would need a stronger form of the notion of weak solution \eqref{eq:notion.ws} that would allow us to take a random time-dependent test function $\psi$ (to then use $\psi=\zeta(\tilde{u})$ as test function and establish the energy equality for the continuous solution). Second, passing to the limit in the stochastic term of \eqref{eq:to2} does not seem straightforward: as shown in the proof of Lemma \ref{eq:lemma1}, handling this term requires to introduce the difference $\zeta(u^{(n+1)})-\zeta(u^{(n)})$, which results in a term that is only bounded (does not necessarily vanish in the limit) -- see the reasoning that leads to \eqref{eq:t13}; as a	 consequence, the resulting discrete energy estimate is only an upper bound with constants that do not necessarily correspond to those in the continuous energy, which prevents an application of the technique in \cite{droniou2016uniform}. Adapting the approach in this work to stochastic PDEs however remains an interesting research direction.
   	\end{remark}

\section{Numerical Examples}\label{sec:numerical_examples} 

We consider the stochastic Stefan problem with dimension $d=2$, $\Theta=(0,1)^{2}$, $T=1$,
\begin{equation*}
	\zeta(u) =	
	\begin{cases}
		u, & \text{if } u \leq 0\\
		1, & \text{if } 0\le u\leq 1\\
		u-1, & \text{if } 1\le u
	\end{cases}\quad\text{and}\quad f(\zeta(u))=\nf \Xi(u)^{1/2}	
\end{equation*}
where $\nf$ is a constant and represents the noise factor.
The analytical solution for the deterministic non-homogeneous Stefan problem \cite{droniou2014uniform}, i.e.,
\begin{equation*}
	u(x_{1},x_{2},t) =	
	\begin{cases}
		2\exp (t-x_{1})\quad (>2), & \text{if } x_{1} < t\\
		\exp (t-x_{1})\quad (<1), & \text{if } t<x_{1},		
	\end{cases}	
\end{equation*}
 is used to fix the initial and boundary conditions for the "Test-1". In the "Test-2", we use $u(0,\cdot)=2$ and $\zeta(u)=-1$ on $(0,T)\times \partial \Theta$.
 We choose $\dt=h^{2}$ to ensure that the truncation in time is not dominating the error and spatial truncation error remains the leading term in the estimates.  Moreover, Gradient Scheme \eqref{eq:gs} is nonlinear and thus requires a nonlinear iterative method to determine the approximate solution. In such a case, the Newton method is a common choice due to its quadratic convergence. At each time step, the initial guess in the Newton algorithm is the solution computed at the previous time step; selecting $\dt=h^2$ gives a level of certainty (even higher for the finer meshes)  that $\dt$ is small enough so that this initial guess is close enough to the actual solution of the nonlinear system, and thus that the Newton algorithm converges.
 
 The Wiener processes are simulated in advance on the finest time scale, and used for all time scales (note that our time discretisations are hierarchical: the finest scales are sub-scales of the coarser ones). This ensures that we do not re-simulate different processes, and thus different solutions, each time we refine the mesh. 

\begin{table}[h!]
\centering
\caption{Data for the triangular meshes}%
\label{tab:tri_mesh}
\begin{tabular}{|c|c|c|c|c|}
\hline
 Mesh& Size& Nb. Cells & Nb. Edges&Nb. Vertices\\
\hline
    mesh1-01&    0.250&   56&     92&     37\\%16    
    mesh1-02&    0.125&   224&    352&    129\\%64
    mesh1-03&    0.063&   896&    1376&   481\\%256
    mesh1-04&    0.050&   1400&   2140&   741\\%400
    mesh1-05&    0.031&   3584&   5440&   1857\\%1024
    mesh1-06&    0.016&   14336&  21632&  7297\\%4096
\hline	
	\end{tabular}%
 \end{table}%
%%%%%%%%%%
 \begin{table}[h!]
\centering
\caption{Data for the hexagonal meshes}%
\label{tab:hexa_mesh}
\begin{tabular}{|c|c|c|c|c|}
\hline
 Mesh& Size& Nb. Cells & Nb. Edges&Nb. Vertices\\
\hline
    hexa1-01&    0.241&   121&     400&     280\\    
    %hexa1-02&    0.169&   256&    825&    570\\
    hexa1-02&    0.130&   441&    1400&   960\\
    hexa1-03&    0.093&   841&    2632&    1792\\
    %hexa1-05&    0.075&   1296&    4025&    2730 \\
    hexa1-04&    0.065&   1681&    5200&   3520\\
    hexa1-05&    0.033&   6561&   20000&   13440\\
\hline	
\end{tabular}%
 \end{table}%
 
The MLP1 and the modified HMM methods, as described in Section \ref{sec:examplesGS}, are used to run the tests. Their codes are available at \url{https://github.com/jdroniou/matlab-SSP}. HMM, being a polytopal method, is simulated over triangular and hexagonal families of meshes whereas MLP1 is tested only on triangular meshes. These meshes can be found in the above mentioned repository; Tables \ref{tab:tri_mesh} and \ref{tab:hexa_mesh} provide some mesh data. In all of our tests, the Newton method works usually quite well; it however needs to be relaxed some times. For the HMM scheme, on average, 2 to 3 relaxations are required for the coarsest mesh (triangular or hexagonal), and about one for the finer meshes; about one relaxation is required for each mesh when using the MLP1 scheme. We also observed that, on average and for each time step, 3 to 4 Newton iterations are required for the MLP1 scheme. However, for HMM, this average is in between 4 to 15 iterations (the larger iterations are for the coarsest mesh) in case of triangular meshes and 7 to 22 iterations in case of hexagonal meshes. 
 %%%%%%%%%%%%%%%%%%%%%%%%%%%%%%%%%%%%%%%%%5
% \begin{figure}[!h]
% \centering
% 	\begin{subfigure}[c]{0.5\linewidth}
% 		\includegraphics[width=\linewidth]{}
% 		\subcaption{MLP1: triangular meshes.}
% 	\end{subfigure}%
% 	\begin{subfigure}[c]{0.5\linewidth}
% 		\includegraphics[width=\linewidth]{}
% 		\subcaption{HMM: triangular meshes.}
% 	\end{subfigure}\\
% \begin{subfigure}[c]{0.5\linewidth}
% 	\includegraphics[width=\linewidth]{}
% 	\subcaption{HMM: hexagonal meshes.}
% \end{subfigure}%
% \caption{Errors verses mesh size.}
% \label{fig:plots_vs_meshsize}
% \end{figure}%
%
%% Test:1
% \input{fig/T1ek1nf1}
% \input{fig/T1vk1nf1}

\subsection{Accuracy tests}

To assess the accuracy of scheme and validate key theoretical estimates on $\pd \zeta(u)$, $\dd \zeta(u)$ and $\pd \Xi(u)$, we compute the errors, for each of these quantities of interest, between the reference quantity computed on the finest mesh of each family, and the interpolate on the finest mesh of the quantity computed on each mesh of the family. In case of the MLP1 scheme, the interpolation is performed using the values of the coarse $\mb{P}^1$ piecewise linear function at the vertices of the fine mesh. For HMM, we use the following algorithm to compute the interpolate $I_R w$ of a quantity $w\in \xdo$ on the finest mesh:
\begin{enumerate}
\item For each $x^f$ center of a fine cell or edge, find a coarse mesh $K^c$ that contains $x^f$.
\item Set the value $(I_Rw)_f$ on this fine cell or edge by a linear interpolation of $w$ in $K^c$, that is:
\[
(I_R w)_f=w_{K^c}+\nabla_{K^c}w\cdot (x^f-x_{K^c}).
\]
\end{enumerate}
This algorithm is suitable for quantities $w$ which are expected, from the model, to have a gradient -- in our case, this means $w=\zeta(u)$. However, when $w$ corresponds to a quantity whose gradient may not be defined (e.g., $w=\Xi(u)$), this interpolation algorithm can lead to very bad values; in that case, we resort to a simpler interpolation by setting $(I_Rw)_f=w_{K^c}$.

The following relative errors based on the averages of over 100 random simulations of the Brownian motion are used in the comparison plots:
\begin{align*}
	E^{m}_{\pd\zeta}={}&\fr{\expc \nrb \pd\zeta(u_{R})-\pd I_{R}(\zeta(u_{m}))\nre[L^{2}(\Theta_{T})]^{2}\rb^{\fr{1}{2}}}{\expc \nrb \pd\zeta(u_{R})\nre[L^{2}(\Theta_{T})]^{2}\rb^{\fr{1}{2}}},\\
	E^{m}_{\dd\zeta}={}&\fr{\expc \nrb \dd\zeta(u_{R})-\dd I_{R}(\zeta(u_{m}))\nre[L^{2}(\Theta_{T})]^{2}\rb^{\fr{1}{2}}}{\expc \nrb \dd\zeta(u_{R})\nre[L^{2}(\Theta_{T})]^{2}\rb^{\fr{1}{2}}},\\
	E^{m}_{\Xi}={}&\fr{\expc\ith\left|\pd\Xi(u^{(T)}_{R})-\pd I_{R}(\Xi(u^{(T)}_{m}))\right|\rb}{\expc \ith \pd\Xi(u^{(T)}_{R})\rb}
\end{align*}  

 \begin{figure}\centering
\vspace{0.50cm}
%\begin{minipage}{0.25\textwidth}
  \centerline{ \ref{t1error}}
  %\end{minipage}
\begin{minipage}{0.25\textwidth}
    \begin{tikzpicture}[scale=0.63]
% \pgfplotsset{
%     every axis legend/.append style={
%         at={(0.5,1.03)},
%         anchor=south
%     },
% }
        \begin{loglogaxis}[name=p1error,legend columns=3,legend to name=t1error, tick align=outside,tick pos=lower,
        %xtick=\empty,
         %extra x ticks={37, 129, 481, 741, 1857},
         %extra x tick labels={37, 129, 481, 741, 1857},
        %extra x tick style={tick label style={rotate=0}},
        xlabel=h,
         ylabel=\textsc{Error}]
          \logLogSlopeTriangle{0.95}{0.4}{0.1}{1}{black};
        \addplot[green!40!black,thick,mark=+,mark options={color=black,solid},dashed] 
        table[x=h, y=PT_EL2z] {dat/e1p1error.dat};
        \addlegendentry{MLP1: triangular}
        \addplot[blue,thick,mark=*,mark options={color=black,solid}] 
        table[x=h, y=HT_EL2z] {dat/e1hterrors.dat};
        \addlegendentry{HMM: triangular}
         \addplot[red!50!black,thick,mark=diamond,mark options={color=black,solid},dotted] 
         table[x=h, y=HH_EL2z] {dat/e1hherrors.dat};
        \addlegendentry{HMM: hexagonal}
          \end{loglogaxis}
    \end{tikzpicture}
\subcaption{$E^m_{\Pi_D \zeta}$}
\end{minipage}
\hskip 35pt
\begin{minipage}{0.25\textwidth}
    \begin{tikzpicture}[scale=0.63]
        \begin{loglogaxis}[name=plot2
        %,at={(p1error.south east)}
        , tick align=outside,tick pos=lower,
        % xtick=\empty,
        %     extra x ticks={0.25,0.125,0.063,0.05},
        %     extra x tick labels={0.25,0.125,0.063,0.05},
        %     extra x tick style={tick label style={rotate=90}},
            xlabel=h,
            ylabel=\textsc{Errors}]
             \logLogSlopeTriangle{0.95}{0.4}{0.1}{1}{black};
          \addplot[green!40!black,thick,mark=+,mark options={color=black,solid},dashed] 
          table[x=h, y=PT_EH1z] {dat/e1p1error.dat};
           %\addlegendentry{$E^m_{\Pi_D \zeta}$}
            \addplot[blue,thick,mark=*,mark options={color=black,solid}] 
           table[x=h, y=HT_EH1z] {dat/e1hterrors.dat};
           %\addlegendentry{$E^m_{\nabla_D \zeta}$}
             \addplot[red!50!black,thick,mark=diamond,mark options={color=black,solid},dotted] 
           table[x=h, y=HH_EH1z] {dat/e1hherrors.dat};
           %\addlegendentry{$E^m_{\Pi_D \Xi}$}
         
              \end{loglogaxis}
    \end{tikzpicture}
\subcaption{$E^m_{\nabla_D \zeta}$}
\end{minipage}
\hskip 35pt
\begin{minipage}{0.25\textwidth}
    \begin{tikzpicture}[scale=0.63]
        \begin{loglogaxis}[name=plot3
        %,at={(p1error.south east)}
        , tick align=outside,tick pos=lower,
        % xtick=\empty,
        %     extra x ticks={0.2414,0.1297,0.09347,0.0657},
        %     extra x tick labels={0.24,0.13,0.093,0.07},
        %     extra x tick style={tick label style={rotate=90}},
            xlabel=h,
            ylabel=\textsc{Errors}]
             \logLogSlopeTriangle{0.95}{0.4}{0.1}{1}{black};
          \addplot[green!40!black,thick,mark=+,mark options={color=black,solid},dashed] 
          table[x=h, y=PT_EL1Xi] {dat/e1p1error.dat};
           %\addlegendentry{$E^m_{\Pi_D \zeta}$}
            \addplot[blue,thick,mark=*,mark options={color=black,solid}] 
           table[x=h, y=HT_EL1Xi] {dat/e1hterrors.dat};
           %\addlegendentry{$E^m_{\nabla_D \zeta}$}
             \addplot[red!50!black,thick,mark=diamond,mark options={color=black,solid},dotted] 
           table[x=h, y=HH_EL1Xi] {dat/e1hherrors.dat};
           %\addlegendentry{$E^m_{\Pi_D \Xi}$}
   
        \end{loglogaxis}
    \end{tikzpicture}
\subcaption{$E^m_{\Pi_D \Xi}$}
\end{minipage}
    \caption{Test-1: Errors vs.\ mesh size.}
    \label{fig:T1Evshk2}
 \end{figure}
 \begin{figure}\centering
\vspace{0.50cm}
%\begin{minipage}{0.25\textwidth}
  \centerline{ \ref{t1normvalues}}
  %\end{minipage}
\begin{minipage}{0.25\textwidth}
    \begin{tikzpicture}[scale=0.63]
% \pgfplotsset{
%     every axis legend/.append style={
%         at={(0.5,1.03)},
%         anchor=south
%     },
% }
        \begin{axis}[name=p1values,legend columns=3,legend to name=t1normvalues, tick align=outside,tick pos=lower,
        %xtick=\empty,
         %extra x ticks={37, 129, 481, 741, 1857},
         %extra x tick labels={37, 129, 481, 741, 1857},
        %extra x tick style={tick label style={rotate=0}},
        xlabel=h,
         ylabel=\textsc{Values}]
        \addplot[green!40!black,thick,mark=+,mark options={color=black,solid},dashed] 
        table[x=h, y=PT_L2z] {dat/e1p1values.dat};
        \addlegendentry{MLP1: triangular}
        \addplot[blue,thick,mark=*,mark options={color=black,solid}] 
        table[x=h, y=HT_L2z] {dat/e1htvalues.dat};
        \addlegendentry{HMM: triangular}
         \addplot[red!50!black,thick,mark=diamond,mark options={color=black,solid},dotted] 
         table[x=h, y=HH_L2z] {dat/e1hhvalues.dat};
        \addlegendentry{HMM: hexagonal}
         \end{axis}
    \end{tikzpicture}
\subcaption{Norm of $\Pi_D \zeta$}
\end{minipage}
\hskip 35pt
\begin{minipage}{0.25\textwidth}
    \begin{tikzpicture}[scale=0.63]
        \begin{axis}[name=plot2
        %,at={(p1values.south east)}
        , tick align=outside,tick pos=lower,
        % xtick=\empty,
        %     extra x ticks={0.25,0.125,0.063,0.05},
        %     extra x tick labels={0.25,0.125,0.063,0.05},
        %     extra x tick style={tick label style={rotate=90}},
            xlabel=h,
            ylabel=\textsc{Values}]
          \addplot[green!40!black,thick,mark=+,mark options={color=black,solid},dashed] 
          table[x=h, y=PT_H1z] {dat/e1p1values.dat};
           %\addlegendentry{$E^m_{\Pi_D \zeta}$}
            \addplot[blue,thick,mark=*,mark options={color=black,solid}] 
           table[x=h, y=HT_H1z] {dat/e1htvalues.dat};
           %\addlegendentry{$E^m_{\nabla_D \zeta}$}
             \addplot[red!50!black,thick,mark=diamond,mark options={color=black,solid},dotted] 
           table[x=h, y=HH_H1z] {dat/e1hhvalues.dat};
           %\addlegendentry{$E^m_{\Pi_D \Xi}$}
              \end{axis}
    \end{tikzpicture}
\subcaption{Norm of $\nabla_D \zeta$}
\end{minipage}
\hskip 35pt
\begin{minipage}{0.25\textwidth}
    \begin{tikzpicture}[scale=0.63]
        \begin{axis}[name=plot3
        %,at={(p1values.south east)}
        , tick align=outside,tick pos=lower,
        % xtick=\empty,
        %     extra x ticks={0.2414,0.1297,0.09347,0.0657},
        %     extra x tick labels={0.24,0.13,0.093,0.07},
        %     extra x tick style={tick label style={rotate=90}},
            xlabel=h,
            ylabel=\textsc{Values}]
          \addplot[green!40!black,thick,mark=+,mark options={color=black,solid},dashed] 
          table[x=h, y=PT_L1Xi] {dat/e1p1values.dat};
           %\addlegendentry{$E^m_{\Pi_D \zeta}$}
            \addplot[blue,thick,mark=*,mark options={color=black,solid}] 
           table[x=h, y=HT_L1Xi] {dat/e1htvalues.dat};
           %\addlegendentry{$E^m_{\nabla_D \zeta}$}
             \addplot[red!50!black,thick,mark=diamond,mark options={color=black,solid},dotted] 
           table[x=h, y=HH_L1Xi] {dat/e1hhvalues.dat};
           %\addlegendentry{$Pi_D \Xi$}
      
        \end{axis}
    \end{tikzpicture}
\subcaption{Norm of $\Pi_D \Xi$}
\end{minipage}
    \caption{Test-1: Norms of $\pd \zeta(u)$, $\dd \zeta(u)$ and $\pd \Xi(u)$ verses mesh size.}
    \label{fig:T1Vvsh}
 \end{figure}
\begin{figure}\centering
\vspace{0.50cm}
%\begin{minipage}{0.25\textwidth}
  \centerline{ \ref{T1k1dofnf1}}
  %\end{minipage}
\begin{minipage}{0.25\textwidth}
    \begin{tikzpicture}[scale=0.63]
% \pgfplotsset{
%     every axis legend/.append style={
%         at={(0.5,1.03)},
%         anchor=south
%     },
% }
        \begin{loglogaxis}[name=T1k1dofnf11,legend columns=3,legend to name=T1k1dofnf1, tick align=outside,tick pos=lower,
        %xtick=\empty,
         %extra x ticks={37, 129, 481, 741, 1857},
         %extra x tick labels={37, 129, 481, 741, 1857},
        %extra x tick style={tick label style={rotate=0}},
        xlabel=\textsc{Ndofs},
         ylabel=\textsc{Error}]
        \addplot[green!40!black,thick,mark=+,mark options={color=black,solid},dashed] 
        table[x=ndofs, y=PT_EL2z] {dat/e1p1error.dat};
        \addlegendentry{MLP1: triangular}
        \addplot[blue,thick,mark=*,mark options={color=black,solid}] 
        table[x=ndofs, y=HT_EL2z] {dat/e1hterrors.dat};
        \addlegendentry{HMM: triangular}
         \addplot[red!50!black,thick,mark=diamond,mark options={color=black,solid},dotted] 
         table[x=ndofs, y=HH_EL2z] {dat/e1hherrors.dat};
        \addlegendentry{HMM: hexagonal}
         \end{loglogaxis}
    \end{tikzpicture}
\subcaption{Error $E^m_{\Pi_D \zeta}$}
\end{minipage}
\hskip 35pt
\begin{minipage}{0.25\textwidth}
    \begin{tikzpicture}[scale=0.63]
        \begin{loglogaxis}[name=plot2
        %,at={(p1error.south east)}
        , tick align=outside,tick pos=lower,
        % xtick=\empty,
        %     extra x ticks={0.25,0.125,0.063,0.05},
        %     extra x tick labels={0.25,0.125,0.063,0.05},
        %     extra x tick style={tick label style={rotate=90}},
            xlabel=\textsc{Ndofs},
            ylabel=\textsc{Errors}]
          \addplot[green!40!black,thick,mark=+,mark options={color=black,solid},dashed] 
          table[x=ndofs, y=PT_EH1z] {dat/e1p1error.dat};
           %\addlegendentry{$E^m_{\Pi_D \zeta}$}
            \addplot[blue,thick,mark=*,mark options={color=black,solid}] 
           table[x=ndofs, y=HT_EH1z] {dat/e1hterrors.dat};
           %\addlegendentry{$E^m_{\nabla_D \zeta}$}
             \addplot[red!50!black,thick,mark=diamond,mark options={color=black,solid},dotted] 
           table[x=ndofs, y=HH_EH1z] {dat/e1hherrors.dat};
           %\addlegendentry{$E^m_{\Pi_D \Xi}$}
              \end{loglogaxis}
    \end{tikzpicture}
\subcaption{Error $E^m_{\nabla_D \zeta}$}
\end{minipage}
\hskip 35pt
\begin{minipage}{0.25\textwidth}
    \begin{tikzpicture}[scale=0.63]
        \begin{loglogaxis}[name=plot3
        %,at={(p1error.south east)}
        , tick align=outside,tick pos=lower,
        % xtick=\empty,
        %     extra x ticks={0.2414,0.1297,0.09347,0.0657},
        %     extra x tick labels={0.24,0.13,0.093,0.07},
        %     extra x tick style={tick label style={rotate=90}},
            xlabel=\textsc{Ndofs},
            ylabel=\textsc{Errors}]
          \addplot[green!40!black,thick,mark=+,mark options={color=black,solid},dashed] 
          table[x=ndofs, y=PT_EL1Xi] {dat/e1p1error.dat};
           %\addlegendentry{$E^m_{\Pi_D \zeta}$}
            \addplot[blue,thick,mark=*,mark options={color=black,solid}] 
           table[x=ndofs, y=HT_EL1Xi] {dat/e1hterrors.dat};
           %\addlegendentry{$E^m_{\nabla_D \zeta}$}
             \addplot[red!50!black,thick,mark=diamond,mark options={color=black,solid},dotted] 
           table[x=ndofs, y=HH_EL1Xi] {dat/e1hherrors.dat};
           %\addlegendentry{$E^m_{\Pi_D \Xi}$}
      
        \end{loglogaxis}
    \end{tikzpicture}
\subcaption{Error $E^m_{\Pi_D \Xi}$}
\end{minipage}
    \caption{Test-1: Errors vs.\ Ndofs.}
    \label{fig:T1Evsdof}
 \end{figure}
For Test-1, the error loglog plots versus the mesh sizes are shown in Figure \ref{fig:T1Evshk2}. The errors for $\pd \zeta(u)$ and $\pd \Xi(u)$ seem to decay linearly, while the rate of convergence for the gradient is slightly below 1. We also tested the validity of the theoretical bounds on $\pd \zeta(u)$, $\dd \zeta(u)$ and $\pd \Xi(u)$ by plotting the respective norms of these quantities in Figure \ref{fig:T1Vvsh}. These plots show the convergence of all three schemes to a similar value for each measure. For the convergence on $\pd \zeta(u)$ all three schemes behave in a comparable way, while the HMM scheme slightly outperforms the MLP1 scheme on the other two quantities. In Figure \ref{fig:T1Evsdof} we analyse the convergence rates in terms of the algebraic complexity, by plotting the errors versus the numbers of degrees of freedom (Ndofs); for the HMM scheme, we do not count the cell unknowns in Ndofs since they are locally eliminated by static condensation. These plots show, for a given mesh family, a slight efficiency advantage to MLP1 for the approximation of $\pd\zeta(u)$. This is not unexpected since MLP1 only has vertex degrees of freedom, while HMM has edge degrees of freedom (the interest of this method being its flexibility with respect to the mesh type). We however note that the accuracy vs.~complexity for $\pd\Xi(u)$ are similar, despite a rougher interpolation of this quantity for the HMM scheme. All these results indicate that the HMM scheme on hexagonal meshes performs better in terms of error reduction, while MLP1 is marginally more efficient in terms of number of DOFs. 

\begin{figure}\centering
\vspace{0.50cm}
%\begin{minipage}{0.25\textwidth}
  \centerline{ \ref{t1errorsep}}
  %\end{minipage}
\begin{minipage}{0.25\textwidth}
    \begin{tikzpicture}[scale=0.63]
% \pgfplotsset{
%     every axis legend/.append style={
%         at={(0.5,1.03)},
%         anchor=south
%     },
% }
        \begin{loglogaxis}[name=p1errorsep,legend columns=3,legend to name=t1errorsep, tick align=outside,tick pos=lower,
        %xtick=\empty,
         %extra x ticks={37, 129, 481, 741, 1857},
         %extra x tick labels={37, 129, 481, 741, 1857},
        %extra x tick style={tick label style={rotate=0}},
        xlabel=h,
         ylabel=\textsc{Error}]
          \logLogSlopeTriangle{0.95}{0.4}{0.1}{1}{black};
        \addplot[green!40!black,thick,mark=+,mark options={color=black,solid},dashed] 
        table[x=h, y=PT_EL2z] {dat/p1error.dat};
        \addlegendentry{MLP1: triangular}
        \addplot[blue,thick,mark=*,mark options={color=black,solid}] 
        table[x=h, y=HT_EL2z] {dat/hterrors.dat};
        \addlegendentry{HMM: triangular}
         \addplot[red!50!black,thick,mark=diamond,mark options={color=black,solid},dotted] 
         table[x=h, y=HH_EL2z] {dat/hherrors.dat};
        \addlegendentry{HMM: hexagonal}
          \end{loglogaxis}
    \end{tikzpicture}
\subcaption{$E^m_{\Pi_D \zeta}$}
\end{minipage}
\hskip 35pt
\begin{minipage}{0.25\textwidth}
    \begin{tikzpicture}[scale=0.63]
        \begin{loglogaxis}[name=plot2
        %,at={(p1error.south east)}
        , tick align=outside,tick pos=lower,
        % xtick=\empty,
        %     extra x ticks={0.25,0.125,0.063,0.05},
        %     extra x tick labels={0.25,0.125,0.063,0.05},
        %     extra x tick style={tick label style={rotate=90}},
            xlabel=h,
            ylabel=\textsc{Errors}]
             \logLogSlopeTriangle{0.95}{0.25}{0.1}{1}{black};
          \addplot[green!40!black,thick,mark=+,mark options={color=black,solid},dashed] 
          table[x=h, y=PT_EH1z] {dat/p1error.dat};
           %\addlegendentry{$E^m_{\Pi_D \zeta}$}
            \addplot[blue,thick,mark=*,mark options={color=black,solid}] 
           table[x=h, y=HT_EH1z] {dat/hterrors.dat};
           %\addlegendentry{$E^m_{\nabla_D \zeta}$}
             \addplot[red!50!black,thick,mark=diamond,mark options={color=black,solid},dotted] 
           table[x=h, y=HH_EH1z] {dat/hherrors.dat};
           %\addlegendentry{$E^m_{\Pi_D \Xi}$}
         
              \end{loglogaxis}
    \end{tikzpicture}
\subcaption{$E^m_{\nabla_D \zeta}$}
\end{minipage}
\hskip 35pt
\begin{minipage}{0.25\textwidth}
    \begin{tikzpicture}[scale=0.63]
        \begin{loglogaxis}[name=plot3
        %,at={(p1error.south east)}
        , tick align=outside,tick pos=lower,
        % xtick=\empty,
        %     extra x ticks={0.2414,0.1297,0.09347,0.0657},
        %     extra x tick labels={0.24,0.13,0.093,0.07},
        %     extra x tick style={tick label style={rotate=90}},
            xlabel=h,
            ylabel=\textsc{Errors}]
             \logLogSlopeTriangle{0.95}{0.4}{0.1}{1}{black};
          \addplot[green!40!black,thick,mark=+,mark options={color=black,solid},dashed] 
          table[x=h, y=PT_EL1Xi] {dat/p1error.dat};
           %\addlegendentry{$E^m_{\Pi_D \zeta}$}
            \addplot[blue,thick,mark=*,mark options={color=black,solid}] 
           table[x=h, y=HT_EL1Xi] {dat/hterrors.dat};
           %\addlegendentry{$E^m_{\nabla_D \zeta}$}
             \addplot[red!50!black,thick,mark=diamond,mark options={color=black,solid},dotted] 
           table[x=h, y=HH_EL1Xi] {dat/hherrors.dat};
           %\addlegendentry{$E^m_{\Pi_D \Xi}$}
   
        \end{loglogaxis}
    \end{tikzpicture}
\subcaption{$E^m_{\Pi_D \Xi}$}
\end{minipage}
    \caption{Test-2: Errors vs.\ mesh size for $\nf=1$.}
    \label{fig:T2EvshK2}
 \end{figure}
\begin{figure}\centering
\vspace{0.50cm}
%\begin{minipage}{0.25\textwidth}
  \centerline{ \ref{t1normvaluessep}}
  %\end{minipage}
\begin{minipage}{0.25\textwidth}
    \begin{tikzpicture}[scale=0.63]
% \pgfplotsset{
%     every axis legend/.append style={
%         at={(0.5,1.03)},
%         anchor=south
%     },
% }
        \begin{axis}[name=p1valuessep,legend columns=3,legend to name=t1normvaluessep, tick align=outside,tick pos=lower,
        %xtick=\empty,
         %extra x ticks={37, 129, 481, 741, 1857},
         %extra x tick labels={37, 129, 481, 741, 1857},
        %extra x tick style={tick label style={rotate=0}},
        xlabel=h,
         ylabel=\textsc{Values}]
        \addplot[green!40!black,thick,mark=+,mark options={color=black,solid},dashed] 
        table[x=h, y=PT_L2z] {dat/p1values.dat};
        \addlegendentry{MLP1: triangular}
        \addplot[blue,thick,mark=*,mark options={color=black,solid}] 
        table[x=h, y=HT_L2z] {dat/htvalues.dat};
        \addlegendentry{HMM: triangular}
         \addplot[red!50!black,thick,mark=diamond,mark options={color=black,solid},dotted] 
         table[x=h, y=HH_L2z] {dat/hhvalues.dat};
        \addlegendentry{HMM: hexagonal}
         \end{axis}
    \end{tikzpicture}
\subcaption{Norm of $\Pi_D \zeta$}
\end{minipage}
\hskip 35pt
\begin{minipage}{0.25\textwidth}
    \begin{tikzpicture}[scale=0.63]
        \begin{axis}[name=plot2
        %,at={(p1values.south east)}
        , tick align=outside,tick pos=lower,
        % xtick=\empty,
        %     extra x ticks={0.25,0.125,0.063,0.05},
        %     extra x tick labels={0.25,0.125,0.063,0.05},
        %     extra x tick style={tick label style={rotate=90}},
            xlabel=h,
            ylabel=\textsc{Values}]
          \addplot[green!40!black,thick,mark=+,mark options={color=black,solid},dashed] 
          table[x=h, y=PT_H1z] {dat/p1values.dat};
           %\addlegendentry{$E^m_{\Pi_D \zeta}$}
            \addplot[blue,thick,mark=*,mark options={color=black,solid}] 
           table[x=h, y=HT_H1z] {dat/htvalues.dat};
           %\addlegendentry{$E^m_{\nabla_D \zeta}$}
             \addplot[red!50!black,thick,mark=diamond,mark options={color=black,solid},dotted] 
           table[x=h, y=HH_H1z] {dat/hhvalues.dat};
           %\addlegendentry{$E^m_{\Pi_D \Xi}$}
              \end{axis}
    \end{tikzpicture}
\subcaption{Norm of $\nabla_D \zeta$}
\end{minipage}
\hskip 35pt
\begin{minipage}{0.25\textwidth}
    \begin{tikzpicture}[scale=0.63]
        \begin{axis}[name=plot3
        %,at={(p1values.south east)}
        , tick align=outside,tick pos=lower,
        % xtick=\empty,
        %     extra x ticks={0.2414,0.1297,0.09347,0.0657},
        %     extra x tick labels={0.24,0.13,0.093,0.07},
        %     extra x tick style={tick label style={rotate=90}},
            xlabel=h,
            ylabel=\textsc{Values}]
          \addplot[green!40!black,thick,mark=+,mark options={color=black,solid},dashed] 
          table[x=h, y=PT_L1Xi] {dat/p1values.dat};
           %\addlegendentry{$E^m_{\Pi_D \zeta}$}
            \addplot[blue,thick,mark=*,mark options={color=black,solid}] 
           table[x=h, y=HT_L1Xi] {dat/htvalues.dat};
           %\addlegendentry{$E^m_{\nabla_D \zeta}$}
             \addplot[red!50!black,thick,mark=diamond,mark options={color=black,solid},dotted] 
           table[x=h, y=HH_L1Xi] {dat/hhvalues.dat};
           %\addlegendentry{$Pi_D \Xi$}
      
        \end{axis}
    \end{tikzpicture}
\subcaption{Norm of $\Pi_D \Xi$}
\end{minipage}
    \caption{Test-2: Norms of $\pd \zeta(u)$, $\dd \zeta(u)$ and $\pd \Xi(u)$ vs.\ mesh size for $\nf=1$.}
    \label{fig:T2Vvsh}
 \end{figure}
\begin{figure}\centering
\vspace{0.50cm}
%\begin{minipage}{0.25\textwidth}
  \centerline{ \ref{T2k1dofnf1}}
  %\end{minipage}
\begin{minipage}{0.25\textwidth}
    \begin{tikzpicture}[scale=0.63]
% \pgfplotsset{
%     every axis legend/.append style={
%         at={(0.5,1.03)},
%         anchor=south
%     },
% }
        \begin{loglogaxis}[name=T2k1dofnf1,legend columns=3,legend to name=T2k1dofnf1, tick align=outside,tick pos=lower,
        %xtick=\empty,
         %extra x ticks={37, 129, 481, 741, 1857},
         %extra x tick labels={37, 129, 481, 741, 1857},
        %extra x tick style={tick label style={rotate=0}},
        xlabel=\textsc{Ndofs},
         ylabel=\textsc{Error}]
        \addplot[green!40!black,thick,mark=+,mark options={color=black,solid},dashed] 
        table[x=ndofs, y=PT_EL2z] {dat/p1error.dat};
        \addlegendentry{MLP1: triangular}
        \addplot[blue,thick,mark=*,mark options={color=black,solid}] 
        table[x=ndofs, y=HT_EL2z] {dat/hterrors.dat};
        \addlegendentry{HMM: triangular}
         \addplot[red!50!black,thick,mark=diamond,mark options={color=black,solid},dotted] 
         table[x=ndofs, y=HH_EL2z] {dat/hherrors.dat};
        \addlegendentry{HMM: hexagonal}
         \end{loglogaxis}
    \end{tikzpicture}
\subcaption{Error $E^m_{\Pi_D \zeta}$}
\end{minipage}
\hskip 35pt
\begin{minipage}{0.25\textwidth}
    \begin{tikzpicture}[scale=0.63]
        \begin{loglogaxis}[name=plot2
        %,at={(p1error.south east)}
        , tick align=outside,tick pos=lower,
        % xtick=\empty,
        %     extra x ticks={0.25,0.125,0.063,0.05},
        %     extra x tick labels={0.25,0.125,0.063,0.05},
        %     extra x tick style={tick label style={rotate=90}},
            xlabel=\textsc{Ndofs},
            ylabel=\textsc{Errors}]
          \addplot[green!40!black,thick,mark=+,mark options={color=black,solid},dashed] 
          table[x=ndofs, y=PT_EH1z] {dat/p1error.dat};
           %\addlegendentry{$E^m_{\Pi_D \zeta}$}
            \addplot[blue,thick,mark=*,mark options={color=black,solid}] 
           table[x=ndofs, y=HT_EH1z] {dat/hterrors.dat};
           %\addlegendentry{$E^m_{\nabla_D \zeta}$}
             \addplot[red!50!black,thick,mark=diamond,mark options={color=black,solid},dotted] 
           table[x=ndofs, y=HH_EH1z] {dat/hherrors.dat};
           %\addlegendentry{$E^m_{\Pi_D \Xi}$}
              \end{loglogaxis}
    \end{tikzpicture}
\subcaption{Error $E^m_{\nabla_D \zeta}$}
\end{minipage}
\hskip 35pt
\begin{minipage}{0.25\textwidth}
    \begin{tikzpicture}[scale=0.63]
        \begin{loglogaxis}[name=plot3
        %,at={(p1error.south east)}
        , tick align=outside,tick pos=lower,
        % xtick=\empty,
        %     extra x ticks={0.2414,0.1297,0.09347,0.0657},
        %     extra x tick labels={0.24,0.13,0.093,0.07},
        %     extra x tick style={tick label style={rotate=90}},
            xlabel=\textsc{Ndofs},
            ylabel=\textsc{Errors}]
          \addplot[green!40!black,thick,mark=+,mark options={color=black,solid},dashed] 
          table[x=ndofs, y=PT_EL1Xi] {dat/p1error.dat};
           %\addlegendentry{$E^m_{\Pi_D \zeta}$}
            \addplot[blue,thick,mark=*,mark options={color=black,solid}] 
           table[x=ndofs, y=HT_EL1Xi] {dat/hterrors.dat};
           %\addlegendentry{$E^m_{\nabla_D \zeta}$}
             \addplot[red!50!black,thick,mark=diamond,mark options={color=black,solid},dotted] 
           table[x=ndofs, y=HH_EL1Xi] {dat/hherrors.dat};
           %\addlegendentry{$E^m_{\Pi_D \Xi}$}
      
        \end{loglogaxis}
    \end{tikzpicture}
\subcaption{Error $E^m_{\Pi_D \Xi}$}
\end{minipage}
    \caption{Test-2: Errors vs.\ Ndofs with $\nf=1$.}
    \label{fig:T2Evsdofk1}
 \end{figure}
The error plots in Figure \ref{fig:T2EvshK2} for Test-2 show, on the contrary, a convergence rate which is higher than one for $\pd \zeta(u)$ and $\pd \Xi(u)$, and of order one for $\dd\zeta(u)$. In these tests, also, the HMM scheme on hexagonal meshes exhibit a smaller error (and sometimes an apparently better rate) compared to the other two schemes. We note however, in Figure \ref{fig:T2Vvsh} (which displays the numerical values of the norms of each quantity), that convergence does not seem to be achieved at the considered mesh sizes for $\dd \zeta(u)$; more refinements would probably be required to see the norm of this value start to stagnate around a particular number. In terms of the algebraic complexity, Figure \ref{fig:T2Evsdofk1} shows a similar behaviour as in Test-1. 

\subsection{Stochastic mushy regions}

We then numerically investigate the existence of mushy region (an intermediate region where solid and liquid coexist), denoted below as MR. Since the source term is zero in the deterministic setting for Test-1, we observed (in results not reported here) that there is no mushy region in this case. In the stochastic case, we used Test-2 and estimated the mushy region by computing the expectation and standard deviation, at each time step and using 200 Brownian motions, of the area of $\bigcup\{K\::\:0<(\pd u)|_{K}<1\}$. These expectation and standard deviations are denoted by $\textsc{Exp-MR}$ and $\textsc{SD-MR}$. All the tests here are done using the MLP1 scheme.
\begin{figure}\centering
\vspace{0.50cm}
%\begin{minipage}{0.45\textwidth}
 \centerline{ \ref{mcmrnf1}}
  %\end{minipage}
%\begin{minipage}{0.45\textwidth}
\begin{tikzpicture}[scale=0.85]
% \pgfplotsset{
%     every axis legend/.append style={
%         at={(0.5,1.03)},
%         anchor=south
%     },
% }
        \begin{axis}[name=mcmrnf11,
        legend columns=5,legend to name=mcmrnf1,
        %tick align=outside,
        %tick pos=lower,
        xlabel=$t$,
        ylabel=\textsc{Exp-mr},
        x post scale =2.5,
        %y post scale =2,
        ytick=\empty,
        extra y ticks={0.25, 0.125, 0.063, 0.05, 0.031},
        extra y tick labels={$0.25$, $0.125$, $0.063$, $0.05$, $0.031$},
        %extra y tick style={tick label style={rotate=0}}
        ]
    % \addplot[green!40!black,thin,mark=none,solid]
    %     %mark options={color=black,solid},dashed] 
    %     table[x=idt, y=nf1_Exp] {mr/M1bm200nf1.txt};
    %     \addlegendentry{mesh1-01}
         \addplot[blue,thin,mark=none,dashdotdotted]
        table[x=idt, y=nf1_Exp] {mr/M2bm200nf1.txt};
        \addlegendentry{mesh1-02}
        \addplot[violet,thin, mark=none,densely dotted]
        table[x=idt, y=nf1_Exp] {mr/M3bm200nf1.txt};
        \addlegendentry{mesh1-03}
        \addplot[red,thin, mark=none, dashdotted]
        table[x=idt, y=nf1_Exp] {mr/M4bm200nf1.txt};
        \addlegendentry{mesh1-04}
          \addplot[black,thin,mark=none, dashed]
        table[x=idt, y=nf1_Exp] {mr/M5bm200nf1.txt};
        \addlegendentry{mesh1-05}
        %  \addplot[violet,thick,mark=none, solid]
        % %mark options={color=black,solid},dashed] 
        % table[x=idt, y=nf8600nbm200] {dat/expec_mr.dat};
        % \addlegendentry{$\nf=8600 \& nbm200$}
         \end{axis}
    \end{tikzpicture}
%\subcaption{MLP1: triangular meshes}
%\end{minipage}
    \caption{Test-2: mesh comparison of expectation of mushy regions vs.\ time with $\nf=1$ and 200 Brownian motions.}
    \label{fig:mrmcnf1}
 \end{figure}%

We performed several experiments to study the existence of possible mushy region with various noise factor and mesh sizes. In the first experiment, we fixed the noise factor to $\nf=1$ and ran the scheme over a set of meshes. The results are plotted in Figure \ref{fig:mrmcnf1}, in which we see that the mushy region seems to initially exist, but then vanishes after a certain time. However, even in the time span where it exists, the area of the mushy region seems to decay with $h$ (compare the areas with the mesh sizes in Table \ref{tab:tri_mesh}), indicating that the mushy region is probably only visible in numerical simulations but do not correspond to an actual mushy region of the continuous model. 
\begin{figure}\centering
\vspace{0.50cm}
%\begin{minipage}{0.45\textwidth}
 \centerline{ \ref{mrm3bm200}}
  %\end{minipage}
%\begin{minipage}{0.45\textwidth}
\begin{tikzpicture}[scale=0.85]
% \pgfplotsset{
%     every axis legend/.append style={
%         at={(0.5,1.03)},
%         anchor=south
%     },
% }
        \begin{axis}[name=mrm3bm2001,
        legend columns=6,legend to name=mrm3bm200,
        tick align=outside,
        tick pos=lower,
        ytick=\empty,
        extra y ticks={0.25, 0.125, 0.063},
        extra y tick labels={$0.25$, $0.125$, $0.063$},
        xlabel=$t$,
         ylabel=\textsc{Exp-mr},
         x post scale =2.5]
         \addplot[yellow!80!black,thin,mark=none,densely dotted]
        table[x=idt, y=nf1_Exp] {mr/M3bm200nf1.txt};
        \addlegendentry{$\nf=1$}
         \addplot[green!40!black,thin,mark=none,dashdotted]
         table[x=idt, y=nf1000_Exp] {mr/M3bm200nf1000.txt};
         \addlegendentry{$\nf=1000$}
         \addplot[blue,thin,mark=none,dashdotdotted]
         table[x=idt, y=nf2000_Exp] {mr/M3bm200nf2000.txt};
         \addlegendentry{$\nf=2000$}
          \addplot[violet,thin,mark=none,densely dashed]
         table[x=idt, y=nf4000_Exp] {mr/M3bm200nf4000.txt};
         \addlegendentry{$\nf=4000$}
        \addplot[red,thin,mark=none, dashed]
         table[x=idt, y=nf6000_Exp] {mr/M3bm200nf6000.txt};
         \addlegendentry{$\nf=6000$}
        \addplot[black,thin,mark=none,solid]
         table[x=idt, y=nf8000_Exp] {mr/M3bm200nf8000.txt};
         \addlegendentry{$\nf=8000$}
      
        %  \addplot[violet,thick,mark=none, solid]
        % %mark options={color=black,solid},dashed] 
        % table[x=idt, y=nf8600] {dat/expec_mr.txt};
        % \addlegendentry{$\nf=8600 \& nbm200$}
         \end{axis}
    \end{tikzpicture}
%\subcaption{MLP1: triangular meshes}
%\end{minipage}
    \caption{Test-2: expectation of mushy region vs.\ time with Mesh1-03 and $200$ Brownian motions.}
    \label{fig:expc_mrm3bm200}
 \end{figure}
\begin{figure}\centering
\vspace{0.50cm}
%\begin{minipage}{0.45\textwidth}
 \centerline{ \ref{var_mr}}
  %\end{minipage}
%\begin{minipage}{0.45\textwidth}
\begin{tikzpicture}[scale=0.85]
% \pgfplotsset{
%     every axis legend/.append style={
%         at={(0.5,1.03)},
%         anchor=south
%     },
% }
        \begin{axis}[name=var_mr1,
        legend columns=6,legend to name=var_mr,
        tick align=outside,
        tick pos=lower,
        xlabel=$t$,
         ylabel=\textsc{SD-mr},
         x post scale =2.5]
        % \addplot[black,thin,mark=none,solid]
        % %mark options={color=black,solid},dashed] 
        % table[x=idt, y=nf1] {mr/M3bm200nf.txt};
        % \addlegendentry{nf1}
        \addplot[yellow!80!black,thin,mark=none,densely dotted]
        table[x=idt, y=nf1_SD] {mr/M3bm200nf1.txt};
        \addlegendentry{$\nf=1$}
        \addplot[green!40!black,thin,mark=none,dashdotted]
        table[x=idt, y=nf1000_SD] {mr/M3bm200nf1000.txt};
        \addlegendentry{$\nf=1000$}
         \addplot[blue,thin,mark=none,dashdotdotted]
        table[x=idt, y=nf2000_SD] {mr/M3bm200nf2000.txt};
        \addlegendentry{$\nf=2000$}
        \addplot[violet,thin,mark=none,densely dashed]
        table[x=idt, y=nf4000_SD] {mr/M3bm200nf4000.txt};
        \addlegendentry{$\nf=4000$}
        \addplot[red,thin,mark=none, dashed]
        table[x=idt, y=nf6000_SD] {mr/M3bm200nf6000.txt};
        \addlegendentry{$\nf=6000$}
        \addplot[black,thin,mark=none,solid]
        table[x=idt, y=nf8000_SD] {mr/M3bm200nf8000.txt};
        \addlegendentry{$\nf=8000$}
        %  \addplot[violet,thick,mark=none, solid]
        % %mark options={color=black,solid},dashed] 
        % table[x=idt, y=nf8600nbm200] {mr/M3bm200nf.txt};
        % \addlegendentry{$\nf=8600 \& nbm200$}
         \end{axis}
    \end{tikzpicture}
%\subcaption{MLP1: triangular meshes}
%\end{minipage}
    \caption{Test-2: standard deviation of mushy region vs.\ time with Mesh1-03 with 200 Brownian motions.}
    \label{fig:var_mr3bm200}
 \end{figure}

In the next experiment, the mesh is fixed and the noise is gradually increased, to assess if a larger stochastic forcing term could generate a mushy region. In Figure \ref{fig:expc_mrm3bm200}, we observe a mushy region, for each noise factor, which starts with a larger measure but stabilises as time progress. This larger initial region is probably due to the chosen initial condition, which forces the solution to cross the plateau at the start of the simulation. We also see that the mushy region reduces with the noise factor and almost vanishes when $\nf=1$, except for a small initial time interval (roughly $I_{0.14}:=(0, 0.14)$). The corresponding standard deviation, in Figure \ref{fig:var_mr3bm200}, is negligible (for $\nf=1$). This indicates that the visible mushy region is not an artifact of noise. Further, this area is of order $h$ and we expect that it will reduce with the mesh refinement.% 

\begin{figure}\centering
\vspace{0.50cm}
%\begin{minipage}{0.45\textwidth}
 \centerline{ \ref{mrm4bm200}}
  %\end{minipage}
%\begin{minipage}{0.45\textwidth}
\begin{tikzpicture}[scale=0.85]
% \pgfplotsset{
%     every axis legend/.append style={
%         at={(0.5,1.03)},
%         anchor=south
%     },
% }
        \begin{axis}[name=mrm4bm2001,
        legend columns=6,legend to name=mrm4bm200,
        tick align=outside,
        tick pos=lower,
        ytick=\empty,
        extra y ticks={0.25, 0.125,  0.05},
        extra y tick labels={$0.25$, $0.125$, $0.05$},
        xlabel=$t$,
         ylabel=\textsc{Exp-mr},
         x post scale =2.5]
        % \addplot[black,thin,mark=none,solid]
        % %mark options={color=black,solid},dashed] 
        % table[x=idt, y=nf1] {mr/m4bm200nf.txt};
        % \addlegendentry{nf1}
         \addplot[yellow!80!black,thin,mark=none,densely dotted]
        table[x=idt, y=nf1_Exp] {mr/m4bm200nf1.txt};
        \addlegendentry{$\nf=1$}
        \addplot[green!40!black,thin,mark=none,dashdotted]
        table[x=idt, y=nf1000_Exp] {mr/m4bm200nf1000.txt};
        \addlegendentry{$\nf=1000$}
        \addplot[blue,thin,mark=none,dashdotdotted]
        table[x=idt, y=nf2000_Exp] {mr/m4bm200nf2000.txt};
        \addlegendentry{$\nf=2000$}
         \addplot[violet,thin,mark=none,densely dashed]
        table[x=idt, y=nf4000_Exp] {mr/m4bm200nf4000.txt};
        \addlegendentry{$\nf=4000$}
       \addplot[red,thin,mark=none, dashed]
        table[x=idt, y=nf6000_Exp] {mr/m4bm200nf6000.txt};
        \addlegendentry{$\nf=6000$}
       \addplot[black,thin,mark=none,solid]
        table[x=idt, y=nf8000_Exp] {mr/m4bm200nf8000.txt};
        \addlegendentry{$\nf=8000$}
      
        %  \addplot[violet,thick,mark=none, solid]
        % %mark options={color=black,solid},dashed] 
        % table[x=idt, y=nf8600] {dat/expec_mr.txt};
        % \addlegendentry{$\nf=8600 \& nbm200$}
         \end{axis}
    \end{tikzpicture}
%\subcaption{MLP1: triangular meshes}
%\end{minipage}
    \caption{Test-2: expectation of mushy region vs.\ time with Mesh1-04 and $200$ Brownian motions.}
    \label{fig:expc_mrm4bm200}
 \end{figure}%
\begin{figure}\centering
\vspace{0.50cm}
%\begin{minipage}{0.45\textwidth}
 \centerline{ \ref{var_mr4bm200}}
  %\end{minipage}
%\begin{minipage}{0.45\textwidth}
\begin{tikzpicture}[scale=0.85]
% \pgfplotsset{
%     every axis legend/.append style={
%         at={(0.5,1.03)},
%         anchor=south
%     },
% }
        \begin{axis}[name=var_mr4bm2001,
        legend columns=6,legend to name=var_mr4bm200,
        tick align=outside,
        tick pos=lower,
        xlabel=$t$,
         ylabel=\textsc{SD-mr},
         x post scale =2.5]
        % \addplot[black,thin,mark=none,solid]
        % %mark options={color=black,solid},dashed] 
        % table[x=idt, y=nf1] {mr/M4bm200nf.txt};
        % \addlegendentry{nf1}
        \addplot[yellow!80!black,thin,mark=none,densely dotted]
        table[x=idt, y=nf1_SD] {mr/M4bm200nf1.txt};
        \addlegendentry{$\nf=1$}
        \addplot[green!40!black,thin,mark=none,dashdotted]
        table[x=idt, y=nf1000_SD] {mr/M4bm200nf1000.txt};
        \addlegendentry{$\nf=1000$}
         \addplot[blue,thin,mark=none,dashdotdotted]
        table[x=idt, y=nf2000_SD] {mr/M4bm200nf2000.txt};
        \addlegendentry{$\nf=2000$}
        \addplot[violet,thin,mark=none,densely dashed]
        table[x=idt, y=nf4000_SD] {mr/M4bm200nf4000.txt};
        \addlegendentry{$\nf=4000$}
        \addplot[red,thin,mark=none, dashed]
        table[x=idt, y=nf6000_SD] {mr/M4bm200nf6000.txt};
        \addlegendentry{$\nf=6000$}
        \addplot[black,thin,mark=none,solid]
        table[x=idt, y=nf8000_SD] {mr/M4bm200nf8000.txt};
        \addlegendentry{$\nf=8000$}
        %  \addplot[violet,thick,mark=none, solid]
        % %mark options={color=black,solid},dashed] 
        % table[x=idt, y=nf8600nbm200] {mr/M4bm200nf.txt};
        % \addlegendentry{$\nf=8600 \& nbm200$}
         \end{axis}
    \end{tikzpicture}
%\subcaption{MLP1: triangular meshes}
%\end{minipage}
    \caption{Test-2: standard deviation of mushy region vs.\ time with Mesh1-04 with 200 Brownian motions.}
    \label{fig:var_mr4bm200}
 \end{figure}%
\begin{figure}\centering
\vspace{0.50cm}
%\begin{minipage}{0.45\textwidth}
 \centerline{ \ref{mrm5bm200}}
  %\end{minipage}
%\begin{minipage}{0.45\textwidth}
\begin{minipage}{0.45\textwidth}
\begin{tikzpicture}[scale=1]
% \pgfplotsset{
%     every axis legend/.append style={
%         at={(0.5,1.03)},
%         anchor=south
%     },
% }
        \begin{axis}[name=mrm5bm2001,
        legend columns=6,legend to name=mrm5bm200,
        tick align=outside,
        skip coords between index={501}{1025},
        tick pos=lower,
        ytick=\empty,
        extra y ticks={0.25, 0.125, 0.063, 0.05, 0.031},
        extra y tick labels={$0.25$, $0.125$, $0.063$, $0.05$, $0.031$},
        xlabel=$t$,
         ylabel=\textsc{Exp-mr},
         %x post scale =2.5
         ]
        % \addplot[black,thin,mark=none,solid]
        % %mark options={color=black,solid},dashed] 
        % table[x=idt, y=nf1] {mr/m5bm200nf.txt};
        % \addlegendentry{nf1}
         \addplot[yellow!80!black,thin,mark=none,densely dotted]
        table[x=idt, y=nf1_Exp] {mr/m5bm200nf1.txt};
        \addlegendentry{$\nf=1$}
        \addplot[green!40!black,thin,mark=none,dashdotted]
        table[x=idt, y=nf1000_Exp] {mr/m5bm200nf1000.txt};
        \addlegendentry{$\nf=1000$}
        \addplot[blue,thin,mark=none,dashdotdotted]
        table[x=idt, y=nf2000_Exp] {mr/m5bm200nf2000.txt};
        \addlegendentry{$\nf=2000$}
         \addplot[violet,thin,mark=none,densely dashed]
        table[x=idt, y=nf4000_Exp] {mr/m5bm200nf4000.txt};
        \addlegendentry{$\nf=4000$}
       \addplot[red,thin,mark=none, dashed]
        table[x=idt, y=nf6000_Exp] {mr/m5bm200nf6000.txt};
        \addlegendentry{$\nf=6000$}
       \addplot[black,thin,mark=none,solid]
        table[x=idt, y=nf8000_Exp] {mr/m5bm200nf8000.txt};
        \addlegendentry{$\nf=8000$}
      
        %  \addplot[violet,thick,mark=none, solid]
        % %mark options={color=black,solid},dashed] 
        % table[x=idt, y=nf8600] {dat/expec_mr.txt};
        % \addlegendentry{$\nf=8600 \& nbm200$}
         \end{axis}
    \end{tikzpicture}
    \subcaption{Expectation}
\end{minipage}
\hskip 35pt
\begin{minipage}{0.45\textwidth}
\begin{tikzpicture}[scale=1]
% \pgfplotsset{
%     every axis legend/.append style={
%         at={(0.5,1.03)},
%         anchor=south
%     },
% }
        \begin{axis}[name=var_mr5bm2001,
        %legend columns=6,legend to name=var_mr5bm200,
        tick align=outside,
        tick pos=lower,
        xlabel=$t$,
        skip coords between index={501}{1025},
         ylabel=\textsc{SD-mr},
         %x post scale =2.5
         ]
        % \addplot[black,thin,mark=none,solid]
        % %mark options={color=black,solid},dashed] 
        % table[x=idt, y=nf1] {mr/M5bm200nf.txt};
        % \addlegendentry{nf1}
        \addplot[yellow!80!black,thin,mark=none,densely dotted]
        table[x=idt, y=nf1_SD] {mr/M5bm200nf1.txt};
       % \addlegendentry{$\nf=1$}
        \addplot[green!40!black,thin,mark=none,dashdotted]
        table[x=idt, y=nf1000_SD] {mr/M5bm200nf1000.txt};
       % \addlegendentry{$\nf=1000$}
         \addplot[blue,thin,mark=none,dashdotdotted]
        table[x=idt, y=nf2000_SD] {mr/M5bm200nf2000.txt};
      %  \addlegendentry{$\nf=2000$}
        \addplot[violet,thin,mark=none,densely dashed]
        table[x=idt, y=nf4000_SD] {mr/M5bm200nf4000.txt};
        %\addlegendentry{$\nf=4000$}
        \addplot[red,thin,mark=none, dashed]
        table[x=idt, y=nf6000_SD] {mr/M5bm200nf6000.txt};
        %\addlegendentry{$\nf=6000$}
        \addplot[black,thin,mark=none,solid]
        table[x=idt, y=nf8000_SD] {mr/M5bm200nf8000.txt};
       % \addlegendentry{$\nf=8000$}
        %  \addplot[violet,thick,mark=none, solid]
        % %mark options={color=black,solid},dashed] 
        % table[x=idt, y=nf8600nbm200] {mr/M5bm200nf.txt};
        % \addlegendentry{$\nf=8600 \& nbm200$}
         \end{axis}
    \end{tikzpicture}
    \subcaption{Standard deviation}
\end{minipage}
\caption{Test-2: expectation and standard deviation of mushy region vs.\ time with Mesh1-05 and $200$ Brownian motions.}
\label{fig:expc_mrm5bm200}
 \end{figure}%
To sharpen the image, this experiment is repeated on finer meshes ("Mesh1-04" and "Mesh1-05"), with the same set of noise factors (see Figures \ref{fig:expc_mrm4bm200} -- \ref{fig:expc_mrm5bm200}). Comparing Figures \ref{fig:expc_mrm3bm200}--\ref{fig:var_mr3bm200} with \ref{fig:expc_mrm4bm200}--\ref{fig:var_mr4bm200}, we observe that $\textsc{Exp-MR}$ and $\textsc{SD-MR}$ decay as $h$ reduces from $0.063$ to $0.05$. For example, $\textsc{Exp-MR}$ for $\nf=8000$ in Figure \ref{fig:expc_mrm3bm200} remains around $0.25$ but reduces to $0.15$ (roughly) in Figure \ref{fig:expc_mrm4bm200}. Moving forward to the finest mesh, in Figure \ref{fig:expc_mrm5bm200}--(a), the mushy region disappears for each $\nf$ except over $I_{0.14}$, in which its area is of order $h=0.031$. This again tells us that the mushy region tends to vanish  as the mesh size is reduced. The standard deviation plot (Figure \ref{fig:expc_mrm5bm200}--(b)) shows a negligible variability for all noise factors;  the mushy region is therefore not due to an influence of noise but rather to the mesh discretisation.
The behaviours of the expectation and corresponding standard deviation of the mushy regions are summarised in Figures \ref{fig:mrexpplusSDnf8000}--\ref{fig:mrexpplusSDnf1}, which present the expectation and its variability to one standard deviation (above or below) of the mushy regions over time. These figures illustrate that the higher variability of expectation is a consequence of larger noises, and decays as the noise reduces (up to almost vanish for $\nf=1$). The leftover non-zero area of mushy region is deterministic; its area seem to be of order $h$, and therefore vanishes in the limit $h\to 0$.

\begin{figure}\centering
\vspace{0.50cm}
%\begin{minipage}{0.45\textwidth}
 \centerline{ \ref{nf8000expSD}}
  %\end{minipage}
%\begin{minipage}{0.45\textwidth}
\begin{tikzpicture}[scale=0.85]
% \pgfplotsset{
%     every axis legend/.append style={
%         at={(0.5,1.03)},
%         anchor=south
%     },
% }
        \begin{axis}[name=nf8000expSD1,
        legend columns=5,legend to name=nf8000expSD,
        %tick align=outside,
        %tick pos=lower,
        skip coords between index={501}{1025},
        xlabel=$t$,
        ylabel=\textsc{MR},
        x post scale =2.5,
        %y post scale =2,
        %ytick=\empty,
        %extra y ticks={0.25, 0.125, 0.063, 0.05, 0.031},
        %extra y tick labels={$0.25$, $0.125$, $0.063$, $0.05$, $0.031$},
        %extra y tick style={tick label style={rotate=0}}
        ]
    % \addplot[green!40!black,thin,mark=none,solid]
    %     %mark options={color=black,solid},dashed] 
    %     table[x=idt, y=nf1_Exp] {mr/M1bm200nf1.txt};
    %     \addlegendentry{mesh1-01}
         \addplot[blue,thin,mark=none,solid]
        table[x=idt, y=nf8000_Exp] {mr/M3bm200nf8000.txt};
        \addlegendentry{Exp}
         \addplot[violet,thin, mark=none,densely dotted]
        % %mark options={color=black,solid},dashed] 
         table[x=idt, y expr=\thisrow{nf8000_Exp}+\thisrow{nf8000_SD}] {mr/M3bm200nf8000.txt};
         \addlegendentry{Exp$\pm$ SD}
         \addplot[violet,thin, mark=none,densely dotted]
        % %mark options={color=black,solid},dashed] 
         table[x=idt, y expr=\thisrow{nf8000_Exp}-\thisrow{nf8000_SD}] {mr/M3bm200nf8000.txt};
        % \addlegendentry{mesh1-04}
        %   \addplot[black,thin,mark=none, dashed]
        % %mark options={color=black,solid},dashed] 
        % table[x=idt, y=nf1_Exp] {mr/M5bm200nf1.txt};
        % \addlegendentry{mesh1-05}
        %  \addplot[violet,thick,mark=none, solid]
        % %mark options={color=black,solid},dashed] 
        % table[x=idt, y=nf8600nbm200] {dat/expec_mr.dat};
        % \addlegendentry{$\nf=8600 \& nbm200$}
         \end{axis}
    \end{tikzpicture}
%\subcaption{MLP1: triangular meshes}
%\end{minipage}
    \caption{Test-2: expectation and standard deviation of mushy regions vs.\ time with $\nf=8000$ and 200 Brownian motions over Mesh1-03.}
    \label{fig:mrexpplusSDnf8000}
 \end{figure}
\begin{figure}\centering
\vspace{0.50cm}
%\begin{minipage}{0.45\textwidth}
 \centerline{ \ref{nf4000expSD}}
  %\end{minipage}
%\begin{minipage}{0.45\textwidth}
\begin{tikzpicture}[scale=0.85]
% \pgfplotsset{
%     every axis legend/.append style={
%         at={(0.5,1.03)},
%         anchor=south
%     },
% }
        \begin{axis}[name=nf4000expSD1,
        legend columns=5,legend to name=nf4000expSD,
        %tick align=outside,
        %tick pos=lower,
        skip coords between index={501}{1025},
        xlabel=$t$,
        ylabel=\textsc{MR},
        x post scale =2.5,
        %y post scale =2,
        %ytick=\empty,
        %extra y ticks={0.25, 0.125, 0.063, 0.05, 0.031},
        %extra y tick labels={$0.25$, $0.125$, $0.063$, $0.05$, $0.031$},
        %extra y tick style={tick label style={rotate=0}}
        ]
    % \addplot[green!40!black,thin,mark=none,solid]
    %     %mark options={color=black,solid},dashed] 
    %     table[x=idt, y=nf1_Exp] {mr/M1bm200nf1.txt};
    %     \addlegendentry{mesh1-01}
         \addplot[blue,thin,mark=none,solid]
        table[x=idt, y=nf4000_Exp] {mr/M3bm200nf4000.txt};
        \addlegendentry{Exp}
         \addplot[violet,thin, mark=none,densely dotted]
        % %mark options={color=black,solid},dashed] 
         table[x=idt, y expr=\thisrow{nf4000_Exp}+\thisrow{nf4000_SD}] {mr/M3bm200nf4000.txt};
         \addlegendentry{Exp$\pm$ SD}
         \addplot[violet,thin, mark=none,densely dotted]
        % %mark options={color=black,solid},dashed] 
         table[x=idt, y expr=\thisrow{nf4000_Exp}-\thisrow{nf4000_SD}] {mr/M3bm200nf4000.txt};
        % \addlegendentry{mesh1-04}
        %   \addplot[black,thin,mark=none, dashed]
        % %mark options={color=black,solid},dashed] 
        % table[x=idt, y=nf1_Exp] {mr/M5bm200nf1.txt};
        % \addlegendentry{mesh1-05}
        %  \addplot[violet,thick,mark=none, solid]
        % %mark options={color=black,solid},dashed] 
        % table[x=idt, y=nf8600nbm200] {dat/expec_mr.dat};
        % \addlegendentry{$\nf=8600 \& nbm200$}
         \end{axis}
    \end{tikzpicture}
%\subcaption{MLP1: triangular meshes}
%\end{minipage}
    \caption{Test-2: expectation and standard deviation of mushy regions vs.\ time with $\nf= 4000$ and 200 Brownian motions over Mesh1-03.}
    \label{fig:mrexpplusSDnf4000}
 \end{figure}
\begin{figure}\centering
\vspace{0.50cm}
%\begin{minipage}{0.45\textwidth}
 \centerline{ \ref{nf1000expSD}}
  %\end{minipage}
%\begin{minipage}{0.45\textwidth}
\begin{tikzpicture}[scale=0.85]
% \pgfplotsset{
%     every axis legend/.append style={
%         at={(0.5,1.03)},
%         anchor=south
%     },
% }
        \begin{axis}[name=nf1000expSD1,
        legend columns=5,legend to name=nf1000expSD,
        %tick align=outside,
        %tick pos=lower,
        skip coords between index={501}{1025},
        xlabel=$t$,
        ylabel=\textsc{MR},
        x post scale =2.5,
        %y post scale =2,
        %ytick=\empty,
        %extra y ticks={0.25, 0.125, 0.063, 0.05, 0.031},
        %extra y tick labels={$0.25$, $0.125$, $0.063$, $0.05$, $0.031$},
        %extra y tick style={tick label style={rotate=0}}
        ]
    % \addplot[green!40!black,thin,mark=none,solid]
    %     %mark options={color=black,solid},dashed] 
    %     table[x=idt, y=nf1_Exp] {mr/M1bm200nf1.txt};
    %     \addlegendentry{mesh1-01}
         \addplot[blue,thin,mark=none,solid]
        table[x=idt, y=nf1000_Exp] {mr/M3bm200nf1000.txt};
        \addlegendentry{Exp}
         \addplot[violet,thin, mark=none,densely dotted]
        % %mark options={color=black,solid},dashed] 
         table[x=idt, y expr=\thisrow{nf1000_Exp}+\thisrow{nf1000_SD}] {mr/M3bm200nf1000.txt};
         \addlegendentry{Exp$\pm$ SD}
         \addplot[violet,thin, mark=none,densely dotted]
        % %mark options={color=black,solid},dashed] 
         table[x=idt, y expr=\thisrow{nf1000_Exp}-\thisrow{nf1000_SD}] {mr/M3bm200nf1000.txt};
        % \addlegendentry{mesh1-04}
        %   \addplot[black,thin,mark=none, dashed]
        % %mark options={color=black,solid},dashed] 
        % table[x=idt, y=nf1_Exp] {mr/M5bm200nf1.txt};
        % \addlegendentry{mesh1-05}
        %  \addplot[violet,thick,mark=none, solid]
        % %mark options={color=black,solid},dashed] 
        % table[x=idt, y=nf8600nbm200] {dat/expec_mr.dat};
        % \addlegendentry{$\nf=8600 \& nbm200$}
         \end{axis}
    \end{tikzpicture}
%\subcaption{MLP1: triangular meshes}
%\end{minipage}
    \caption{Test-2: expectation and standard deviation of mushy regions vs.\ time with $\nf=1000$ and 200 Brownian motions over Mesh1-03.}
    \label{fig:mrexpplusSDnf1000}
 \end{figure}
\begin{figure}\centering
\vspace{0.50cm}
%\begin{minipage}{0.45\textwidth}
 \centerline{ \ref{nf1expSD}}
  %\end{minipage}
%\begin{minipage}{0.45\textwidth}
\begin{tikzpicture}[scale=0.85]
% \pgfplotsset{
%     every axis legend/.append style={
%         at={(0.5,1.03)},
%         anchor=south
%     },
% }
        \begin{axis}[name=nf1expSD1,
        legend columns=5,legend to name=nf1expSD,
        %tick align=outside,
        %tick pos=lower,
        skip coords between index={501}{1025},
        xlabel=$t$,
        ylabel=\textsc{MR},
        x post scale =2.5,
        %y post scale =2,
        %ytick=\empty,
        %extra y ticks={0.25, 0.125, 0.063, 0.05, 0.031},
        %extra y tick labels={$0.25$, $0.125$, $0.063$, $0.05$, $0.031$},
        %extra y tick style={tick label style={rotate=0}}
        ]
    % \addplot[green!40!black,thin,mark=none,solid]
    %     %mark options={color=black,solid},dashed] 
    %     table[x=idt, y=nf1_Exp] {mr/M1bm200nf1.txt};
    %     \addlegendentry{mesh1-01}
         \addplot[blue,thin,mark=none,solid]
        table[x=idt, y=nf1_Exp] {mr/M3bm200nf1.txt};
        \addlegendentry{Exp}
         \addplot[violet,thin, mark=none,densely dotted]
        % %mark options={color=black,solid},dashed] 
         table[x=idt, y expr=\thisrow{nf1_Exp}+\thisrow{nf1_SD}] {mr/M3bm200nf1.txt};
         \addlegendentry{Exp$\pm$ SD}
         \addplot[violet,thin, mark=none,densely dotted]
        % %mark options={color=black,solid},dashed] 
         table[x=idt, y expr=\thisrow{nf1_Exp}-\thisrow{nf1_SD}] {mr/M3bm200nf1.txt};
        % \addlegendentry{mesh1-04}
        %   \addplot[black,thin,mark=none, dashed]
        % %mark options={color=black,solid},dashed] 
        % table[x=idt, y=nf1_Exp] {mr/M5bm200nf1.txt};
        % \addlegendentry{mesh1-05}
        %  \addplot[violet,thick,mark=none, solid]
        % %mark options={color=black,solid},dashed] 
        % table[x=idt, y=nf8600nbm200] {dat/expec_mr.dat};
        % \addlegendentry{$\nf=8600 \& nbm200$}
         \end{axis}
    \end{tikzpicture}
%\subcaption{MLP1: triangular meshes}
%\end{minipage}
    \caption{Test-2: expectation and standard deviation of mushy regions vs.\ time with $\nf=1$ and 200 Brownian motions over Mesh1-03.}
    \label{fig:mrexpplusSDnf1}
 \end{figure}
These experiments indicate that the visible mushy region is either due to the induced noise or to the space discretisation, and suggest that there is no mushy region for the continuous stochastic problem.

\section{Conclusion}

We presented a generic numerical analysis, based on the GDM framework, of the stochastic Stefan problem driven by a multiplicative noise. Using  discrete functional analysis tools and the Skorokhod theorem, we showed the compactness of the solution to the gradient scheme. We then proved the existence of weak martingale solution and obtained the convergence of the approximate solution. Though these results are available to all of the methods that lies under the hood of the GDM framework, we chose MLP1 and HMM to illustrate them. We observed that, under the influence of multiplicative noise, the overall numerical approximations are reasonably good and corroborate the theoretical results.

\section{Acknowledgement}
This work was partially supported by the Australian Government through the Australian Research
Council's Discovery Projects funding scheme (grant number DP220100937).

\section{Appendix}\label{appen}
Let $(\Omega,\mc{F},\mb{P})$ is a complete probability space and $(E,\mc{B}(E))$ a measurable space where $E$ is separable metric space and $\mc{B}(E)$ denotes the Borel $\sigma$-field on it. Moreover, let $(X_n)_{n\in \mathbb{N}}$ be $E$-valued random variables, that is, measurable functions $X_n:\Omega\rightarrow E$. Every random variable induces the law on $E$ (also called probability measure or distribution on $E$) defined by $\mc{L}(X_n)(A)=\mb{P}(X_n\in A)$ for all $A\in\mc{B}(E)$.
\begin{definition}[Tightness]
Let $E$ be a separable Banach space and let $(X_n)_{n\in \mathbb{N}}$ be $E$-valued random variables. The laws of $(X_n)_{n\in\mathbb{N}}$ are tight if, for any $\varepsilon>0$ there exist a compact set $K_\varepsilon\subset E$ such that
$$\mc{L}(X_n)(K_\varepsilon) \ge 1-\varepsilon, \qquad n=1,2,\cdots.$$
\end{definition}
The proof of the following theorem can be seen in \cite[Theorem 2]{jakubowski1998almost}.
\begin{theorem}[Jakubowski version of Skorokhod Theorem]\label{thm:J_Skorokhod}
   Let $(\mc{X},\tau)$ be a topological space with the assumption that there exists a countable family $\{ f_n:\mc{X}\rightarrow \mb{R}\}_{n\in I}$ of $\tau$-continuous functions, which separates points of $\mc{X}$.   
  Assume moreover that the laws of $(X_n)_{n\in I}$ are tight in $\mc{X}$. Then one can find a subsequence $\{X_{n_{k}}\}_{k\in N}$ and $\mc{X}$-valued random variables $\{Y_{k}\}_{k\in N}$ defined on $([0,1],\mc{B}_{[0,1]})$ such that
   $$\mc{L}(X_{n_k})= \mc{L}(Y_k), \qquad k=1,2, \cdots,$$
   $$Y_k(\omega)\rightarrow_{\tau} Y_0(\omega)\;\text{as}\; k\rightarrow \infty, \; \text{a.s. }\omega\in [0,1]$$
   where $\rightarrow_\tau$ represents the convergence of the sequence in the topology $\tau$.
\end{theorem}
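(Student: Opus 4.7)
The plan is to follow Jakubowski's original embedding argument, reducing the problem to the classical Skorokhod theorem on a Polish space. First, I would use the countable separating family $\{f_n\}_{n\in I}$ to define the map $F:\mc{X}\to\mb{R}^{I}$ by $F(x):=(f_n(x))_{n\in I}$. Since each $f_n$ is $\tau$-continuous, $F$ is continuous when $\mb{R}^I$ carries the (metrizable, Polish) product topology; since the family separates points, $F$ is injective. Transferring the hypothesis: if $K_\varepsilon\subset\mc{X}$ is $\tau$-compact with $\mc{L}(X_n)(K_\varepsilon)\ge1-\varepsilon$ uniformly in $n$, then $F(K_\varepsilon)$ is a compact subset of $\mb{R}^I$ and $\mc{L}(F(X_n))(F(K_\varepsilon))\ge 1-\varepsilon$, so $(F(X_n))_n$ is tight in $\mb{R}^I$. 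By Prokhorov's theorem the sequence is relatively compact in distribution, and applying the classical Skorokhod theorem on $([0,1],\mc{B}_{[0,1]})$ yields a subsequence $(n_k)$ and $\mb{R}^I$-valued random variables $\tilde Y_k$ with $\mc{L}(\tilde Y_k)=\mc{L}(F(X_{n_k}))$ and $\tilde Y_k(\omega)\to \tilde Y_0(\omega)$ for almost every $\omega\in[0,1]$.

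Second, I would lift the $\tilde Y_k$ to $\mc{X}$-valued random variables $Y_k$. Because $\mc{L}(\tilde Y_k)$ is supported on $F(\mc{X})$, we have $\tilde Y_k(\omega)\in F(\mc{X})$ almost surely, and by injectivity of $F$ the point $Y_k(\omega):=F^{-1}(\tilde Y_k(\omega))$ is uniquely defined on this full-measure set (and set arbitrarily elsewhere). Verifying that $Y_k$ is $\mc{B}_{[0,1]}/\mc{B}(\mc{X})$-measurable and satisfies $\mc{L}(Y_k)=\mc{L}(X_{n_k})$ is the technical core. The tightness provides an $F_\sigma$ set $\mc{X}_0=\bigcup_\ell K_{1/\ell}\subset\mc{X}$ with $\mc{L}(X_{n_k})(\mc{X}_0)=1$ for every $k$. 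On each compact $K_{1/\ell}$, $F$ is a continuous injection from a compact Hausdorff space (Hausdorffness of $\tau$ follows from the existence of a point-separating family of continuous real functions) onto $F(K_{1/\ell})$, hence a homeomorphism onto its image; therefore $F^{-1}$ is continuous on $F(K_{1/\ell})$, in particular Borel measurable. Gluing these continuous inverses gives a Borel-measurable $G:F(\mc{X}_0)\to\mc{X}_0$, and $Y_k=G\circ\tilde Y_k$ is then measurable.

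Third, to upgrade the a.s.\ convergence from $\mb{R}^I$ to $(\mc{X},\tau)$, I would fix $\omega$ outside the exceptional null set and use tightness to confine the trajectory $(\tilde Y_k(\omega))_k$ to a single $F(K_{1/\ell(\omega)})$ up to finitely many indices: indeed, a diagonal extraction combined with the Borel--Cantelli lemma applied to the events $\{\tilde Y_k\notin F(K_{1/\ell})\}$ (whose probabilities are uniformly bounded by $1/\ell$) gives, for $\tilde{\mb{P}}$-a.e.\ $\omega$, the existence of some $\ell$ such that $\tilde Y_k(\omega)\in F(K_{1/\ell})$ for all $k$ large. Since $G$ is continuous on $F(K_{1/\ell})$, $Y_k(\omega)=G(\tilde Y_k(\omega))\to G(\tilde Y_0(\omega))=Y_0(\omega)$ in $\tau$.

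The hard part will be the measurable selection and the transfer of convergence from $\mb{R}^I$ back to $\mc{X}$, because $\mc{X}$ is not assumed Polish or even metrizable, so $F$ is not a global homeomorphism and no direct inverse to $F$ is available on all of $\mc{X}$. The decisive ingredient is to localize: tightness confines the random variables to a countable union of $\tau$-compacts, and on each such compact the classical fact that a continuous bijection from a compact Hausdorff space to a Hausdorff space is a homeomorphism restores the missing continuity of $F^{-1}$; a Borel--Cantelli argument then ensures that, almost surely, only one of these compacts is relevant asymptotically, allowing both measurability and pointwise convergence to be read off coordinate-wise through $F$.
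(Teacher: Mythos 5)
First, for context: the paper does not actually prove this statement — it is quoted as a known result and the text points to \cite[Theorem 2]{jakubowski1998almost} for the proof, so there is no internal argument to compare against. Your reconstruction does follow the skeleton of Jakubowski's argument: embed $\mc{X}$ into the Polish space $\mb{R}^I$ via $F=(f_n)_{n\in I}$, push the tightness forward, apply Prokhorov and a Skorokhod representation there, and pull back using the fact that a continuous injection of a compact Hausdorff space into a Hausdorff space is a homeomorphism onto its image. The measurability discussion (gluing the continuous inverses $(F|_{K_{1/\ell}})^{-1}$ over the $\sigma$-compact set $\mc{X}_0$) and the identification of the laws are essentially right, up to the small point that for the limit variable $\tilde Y_0$ you need the portmanteau theorem applied to the closed sets $F(K_{1/\ell})$ to see that its law is carried by $F(\mc{X}_0)$; for $k\ge 1$ this is automatic but for $k=0$ it is not.

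The genuine gap is in your third step. To conclude $Y_k(\omega)\rightarrow Y_0(\omega)$ in $\tau$ you need the whole tail of the sequence $(\tilde Y_k(\omega))_k$ to lie in a \emph{single} compact $F(K_{1/\ell(\omega)})$, since coordinatewise convergence $f_i(Y_k(\omega))\rightarrow f_i(Y_0(\omega))$ does not imply $\tau$-convergence outside a common compact. Your Borel--Cantelli argument does not deliver this: for fixed $\ell$ the bounds $\tilde{\mb{P}}(\tilde Y_k\notin F(K_{1/\ell}))\le 1/\ell$ are not summable over $k$, and the diagonal variant (choosing $\ell_j\rightarrow\infty$ so that the probabilities become summable) only confines $\tilde Y_{k_j}$ to the \emph{growing} compacts $F(K_{1/\ell_j})$, which is of no use. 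The marginal bounds alone are in fact compatible with couplings in which, for a.e.\ $\omega$, infinitely many $\tilde Y_k(\omega)$ escape every fixed $F(K_{1/\ell})$. This confinement property is precisely the technical heart of Jakubowski's proof: one cannot invoke the classical Skorokhod representation theorem as a black box, but must re-run its partition-based construction with partitions chosen to refine the decomposition of the image space into $F(K_1)$, $F(K_{1/2})\setminus F(K_1)$, $F(K_{1/3})\setminus F(K_{1/2})$, etc., so that for large $k$ the variable $\tilde Y_k(\omega)$ lands in the same cell — hence the same compact — as $\tilde Y_0(\omega)$. Without this (or an equivalent device) the passage from a.s.\ convergence in $\mb{R}^I$ back to a.s.\ convergence in $(\mc{X},\tau)$ is not justified.
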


\printbibliography
\end{document}